\definecolor{darkblue}{RGB}{0,0,170}
\definecolor{brickred}{RGB}{200,0,0}
\definecolor{darkblu}{RGB}{100,40,150}
\definecolor{xanhle}{RGB}{0,0,0}
\def\@seccntformat#1{%
	\protect\textup{%
		\protect\@secnumfont
		\expandafter\protect\csname format#1\endcsname 
		\csname the#1\endcsname
		\protect\@secnumpunct
	}%
}
\def\th@plain{%
	\thm@notefont{}
	\slshape 
}
\def\th@definition{%
	\thm@notefont{}
	\normalfont 
}
\theoremstyle{plain}
\newtheorem{theorem}{Theorem}
\numberwithin{theorem}{section}
\newtheorem{lemma}[theorem]{Lemma}
\newtheorem{proposition}[theorem]{Proposition}
\newtheorem{corollary}[theorem]{Corollary}
\theoremstyle{definition}
\newtheorem{definition}[theorem]{Definition}
\newtheorem{remark}[theorem]{Remark}
\newcommand{\vertiii}[1]{{\left\vert\kern-0.3ex\left\vert\kern-0.3ex\left\vert #1 
    \right\vert\kern-0.3ex\right\vert\kern-0.3ex\right\vert}}
\newcommand{\Hbb}{\mathbb{H}}
\newcommand{\Bb}{\mathbb{B}}
\newcommand{\R}{\mathbb{R}}
\newcommand{\N}{\mathbb{N}}
\newcommand{\T}{\mathbb{T}}
\newcommand{\M}{\mathcal{M}}
\renewcommand{\L}{\mathbb{L}}
\newcommand{\Gbb}{\mathbb{G}}
\newcommand{\dist}{\hbox{dist}}
\newcommand{\va}[1]{\left\{\begin{array}{r@{\text{ }}ll}#1\end{array}\right.}
\newcommand{\inner}[1]{\left \langle #1 \right\rangle}
\newcommand{\norm}[1]{\left\| #1\right\|}
\DeclareMathOperator{\loc}{loc}
\DeclareMathOperator{\sgn}{sign}
\DeclareMathOperator{\SFL}{SFL}
\DeclareMathOperator{\CFL}{CFL}
\DeclareMathOperator{\RFL}{RFL}
\DeclareMathOperator{\supp}{supp}
\numberwithin{equation}{section}
\newcommand{\revision}[2]{\sout{#1}{\color{darkblue}#2}}
\newcommand{\Ibb}{\mathbb{I}}
\newcommand{\D}{\mathcal{D}}
\newcommand{\B}{\mathcal{E}}
\newcommand{\CS}{\mathcal{S}}
\newcommand{\CB}{\mathcal{B}}
\newcommand{\CF}{\mathcal{F}}
\newcommand{\CO}{\mathcal{O}}
\newcommand\1{{\ensuremath {\mathds 1} }}
\begin{document}

\title[Semilinear nonlocal elliptic equations]{Compactness of Green operators  with applications \\ to semilinear nonlocal elliptic equations}
\author[P.-T. Huynh]{Phuoc-Truong Huynh}
\address{P.-T. Huynh,  Department of Mathematics, Alpen-Adria-Universit\"{a}t Klagenfurt,  Klagenfurt, Austria.}
\email{phuoc.huynh@aau.at}

\author[P.-T. Nguyen]{Phuoc-Tai Nguyen}
\address{P.-T. Nguyen, Department of Mathematics and Statistics, Masaryk University, Brno, Czech Republic.}
\email{ptnguyen@math.muni.cz}


\begin{abstract} 
In this paper, we consider a class of integro-differential operators $\mathbb{L}$ posed on a $C^2$ bounded domain $\Omega \subset \mathbb{R}^N$ with appropriate homogeneous Dirichlet conditions where each of which admits an inverse operator commonly known as the Green operator $\Gbb^{\Omega}$.  Under mild conditions on $\mathbb{L}$ and its Green operator, we establish various sharp compactness of $\mathbb{G}^{\Omega}$ involving weighted Lebesgue spaces and weighted measure spaces. These results are then employed to prove the solvability for semilinear elliptic equation $\mathbb{L} u + g(u) = \mu$ in $\Omega$ with boundary condition $u=0$ on $\partial \Omega$ or exterior condition $u=0$ in $\mathbb{R}^N \setminus \Omega$ if applicable, where $\mu$ is a Radon measure on $\Omega$ and $g: \R \to \R$ is a nondecreasing continuous function satisfying a subcriticality integral condition. When $g(t)=|t|^{p-1}t$ with $p>1$, we provide a sharp sufficient condition expressed in terms of suitable Bessel capacities for the existence of a solution.  The contribution of the paper consists of (i) developing novel unified techniques which allow to treat various types of fractional operators and (ii) obtaining sharp compactness and existence results in weighted spaces, which refine and extend several related results in the literature.
\bigskip
	
\noindent\textit{Keywords: integro-differential operators, compactness, Green function, Kato-type inequalities, critical exponents, weak-dual solutions, measure data, Bessel capacities, Riesz potentials.}
	
\bigskip
	
\noindent\textit{2020 Mathematics Subject Classification: 35J61, 35B33, 35B65, 35R06, 35R11, 35D30, 35J08.}

\end{abstract}

\maketitle

\tableofcontents

%
%

\section{Introduction}
Over the last few decades, there have been remarkable advancements in the research of integral-differential operators, which are prevalent in diverse fields such as probability, physics, finance, and ecology. The nonlocal intrinsic nature of these operators also attracts significant attention from the PDE community due to the occurrence of new phenomena associated with them. In particular, their inverse operators,  known to exist when we consider suitable boundary or external conditions,  play an important role in the theory of elliptic equations; therefore the approach involving the inverse operators has been studied in several works (see e.g.    \cite{CheVer_2014,  Aba_2015} for equations involving the standard fractional Laplacian and \cite{AbaDup_2017} for equations involving the spectral fractional Laplacian). 

Recently, unified techniques have been developed based on a fine analysis to derive various properties of such operators and their corresponding Green function, which is known as the kernel of the inverse operator when a suitable boundary/external condition is posed. Such techniques have been widely employed, initially in the investigation of fractional porous medium equations \cite{BonSirVaz_2015} and subsequently in the study of sharp boundary behavior to semilinear elliptic equations by Bonforte, Figalli, and V\'{a}zquez \cite{BonFigVaz_2018} and other works.

In the same spirit,  the present paper is devoted to the study a class of integral-differential linear operators $\L$ posed on a $C^2$ bounded domain $\Omega \subset \R^N$ ($N \geq 3$) together their inverse operators commonly referred to as the Green operators, denoted by $\Gbb^{\Omega}$, and represented by a Green functions $G^{\Omega}$. The detailed list of assumptions is given in Subsection \ref{sec:pre_assumption} and is illustrated by well-known fractional Laplacians, as discussed in Subsection \ref{sec:examples}.  The goal of the paper is twofold: to develop unified techniques to establish compactness properties of the Green operator $\Gbb^{\Omega}$ between various optimal weighted spaces under the given assumptions, and subsequently to apply these results in the study of the boundary value problem for semilinear elliptic equations with absorption term of the form
\begin{equation}\label{eq:equation_introduction}
\L u + g(u) = \mu \text{ in }\Omega,
\end{equation}
with homogeneous Dirichlet condition
\begin{equation} \label{eq:bdry-cond}
u = 0 \text{ on }\partial \Omega \quad \text{or}\quad \text{ in }\R^N \backslash \Omega \text{ if applicable}, 
\end{equation}
where $\mu$ is a weighted Radon measure on $\Omega$ and $g: \R \to \R$ is a continuous function.  

\subsection*{Overview of the literature}

The compactness of Green operators for both differential and integro-differential linear operators is known to be a useful tool in studying various types of associated nonlinear equations.  Typically in the literature this result is established by using two key factors: the regularity theory governing solutions to linear equations and compact embeddings between the involved function spaces.  For instance, in the case of the (minus) Laplacian  $-\Delta$, the associated Green operator $\Gbb^\Omega$ is compact from $\M(\Omega,\delta)$ into $L^q(\Omega,\delta)$ for any $q \in [1,\frac{N+1}{N})$, where $\delta(x)=\dist(x,\partial \Omega)$, due to the fact that $\Gbb: \M(\Omega,\delta) \to W^{1,q}(\Omega,\delta)$ is continuous (see \cite[Theorem 3.5]{Ver-handbook}) and the Sobolev compact embedding $W^{1,q}(\Omega,\delta)  \hookrightarrow  \hookrightarrow  L^q(\Omega,\delta)$ (the definitions of function spaces $L^q(\Omega,\delta^s)$  and measure spaces $\M(\Omega,\delta^s)$ are given in Subsection \ref{sec:result}).  A similar result was also established for the fractional Laplacian $(-\Delta)^s$, $s \in (0,1)$, defined by
$$ (-\Delta)^s u(x):=c_{N,s} \, \mathrm{P.V.} \int_{\R^N} \frac{u(x)-u(y)}{|x-y|^{N+2s}}dy,\quad x \in \Omega,$$
with $c_{N,s} > 0$ being a normalized constant and the abbreviation P.V. standing for ``in the principal value sense", see \eqref{Lepsilon}. More precisely, the compactness of  $\Gbb^{\Omega}: L^1(\Omega,\delta^s) \to L^q(\Omega)$ for any $q \in [1,\frac{N}{N-s})$ was proved by Chen and V\'eron \cite[Proposition 2.6]{CheVer_2014} by applying the Stampacchia's duality method and a fractional Sobolev embedding.  In the aforementioned results, compact embeddings from Sobolev spaces into Lebesgue spaces play a crucial role.  The compactness result for the Green operator of $(-\Delta)^s$, $s \in (0,1]$, was employed to study the boundary value problem 
\begin{equation} \label{prob:frac} \left\{  \begin{aligned}
		(-\Delta)^s u + g(u) &= \mu &&\text{ in }\Omega,\\ 
		u &= 0 &&\text{ on } \partial \Omega \text{ if } s =1 \text{ or in } \R^N \backslash \Omega \text{ if } s \in (0,1),
\end{aligned} \right. \end{equation}
where $g:\R \to \R$ is a nondecreasing, continuous \textit{absorption} with $g(0) = 0$. An abundant theory for problem \eqref{prob:frac} has been developed in function settings and then extended to measure frameworks (see e.g. \cite{BreStr_1973,Bre_1980,BenBre_2003,MarVer_2014,Pon_2016} for the local case $s=1$ and \cite{CheVer_2014,AbaDup_2017} for the nonlocal case $s \in (0,1)$). 
Among other results, it is noteworthy that by using the compactness results of $\Gbb^{\Omega}$, Chen and V\'eron \cite{CheVer_2014} proved the existence of a weak solution to \eqref{prob:frac} for all measures $\mu \in \M(\Omega,\delta^s)$ provided $f$ satisfies a sharp subcritical integrability condition (which includes the subcritical case $g(u)=|u|^{p-1}u$, $1<p<\frac{N+s}{N-s}$), and for a class of measures absolutely continuous with respect to a suitable capacity if $g(u)=|u|^{p-1}u$, $p \geq \frac{N+s}{N-s}$.


A great deal of effort has been devoted to the study of elliptic equations involving a large class of nonlocal operators characterized by estimates on the associated Green function. In this framework, Bonforte and V\'azquez  \cite{BonVaz_2015} introduced a notion of weak-dual solution which involves only the associated Green function and does not require to specify the meaning of the operators acting on test functions.  This research direction has been greatly pushed forward by numerous publications, including Bonforte, Figalli and V\'azquez \cite{BonFigVaz_2018},  Abatangelo, G\'omez-Castro and V\'azquez \cite{Aba_2019}, and  Chan, G\'omez-Castro and V\'azquez  \cite{GomVaz_2019}.  Lately, a work originated in attempt to study semilinear equations with source terms was carried out in \cite{TruongTai_2020} where existence and multiplicity results were established under a smallness condition on data. In parallel, significant contributions based on probabilistic approaches were given by Kim, Song and Vondra\v cek in \cite{KimSonVon_2020, KimSonVon_2020-1,KimSonVon_2019}. In addition, some semilinear integro-differential elliptic equations have also been treated in this fashion in several recent works \cite{BogJarKan_2011, BiVoWag_2021, Klimsiak_2023}.

\subsection*{Contributions} Motivated by the aforementioned works, in this paper we investigate the compactness of Green operators and semilinear nonlocal elliptic equations involving measures. Our main contributions are outlined below and will be stated in details in Section \ref{sec:result}.

(i) We obtain the compactness of the mapping $\mu \mapsto \Gbb^{\Omega}[\mu]$ from a weighted measure space into a weighted Lebesgue space. It is worth highlighting that the weighted measure space that we work on is the optimal class of data for which $\Gbb^{\Omega}[\mu]$ is finite a.e. in $\Omega$. Our approach is based on Riesz-Fr\'echet-Kolmogorov theorem and properties of $\Gbb^{\Omega}$ without using Sobolev embeddings, which justifies the difference our technique in comparison with those employed in \cite{Ver-handbook,CheVer_2014}.

(ii) We prove the existence results for problem \eqref{eq:equation_introduction}--\eqref{eq:bdry-cond} in the context of weak-dual solutions introduced above.  An important ingredient in the deriving the existence of solutions is the compactness of $\Gbb^{\Omega}$ mentioned in (i).  The uniqueness follows directly from variants of Kato's inequality in the context of weak-dual solutions which is proved by using the expression of $\mathbb{L}$ and fine properties of variational solutions to non-homogeneous linear equations. When $g$ is a purely power function, we establish a sharp sufficient condition expressed in terms of Bessel capacities for the existence of a solution. Our existence and uniqueness results cover those in \cite{Ver-handbook, CheVer_2014} and seems to be new in case of other fractional Laplacians.


\subsection*{Organization of the paper}
The rest of the paper is organized as follows. In Section \ref{sec:preliminaries}, we present the main assumptions on the class of operators and provide some examples. In Section \ref{sec:result}, we state the central results of this paper,  namely the compactness of the Green operator $\Gbb^{\Omega}$ associated to $\L$ and the solvability of problem \eqref{eq:equation_introduction}--\eqref{eq:bdry-cond}.  In Section \ref{sec:lineartheory}, we perform the proof of the compactness of $\Gbb^{\Omega}$.  Section \ref{sec:katoinequality} is devoted to the derivation of Kato type inequality. In Section \ref{sec:semilinear},  we prove the existence and uniqueness of solutions to problem \eqref{eq:equation_introduction}--\eqref{eq:bdry-cond}
by employing the results in Sections  \ref{sec:lineartheory} and \ref{sec:katoinequality}. In Section \ref{sec:sufficientcondition}, we focus on the power absorption and provide a sufficient condition in terms of appropriate Bessel capacities for the solvability of problem \eqref{eq:equation_introduction}--\eqref{eq:bdry-cond}. As a byproduct, in Appendix \ref{sec:boundarysolution}, we show the existence of a solution to equation \eqref{eq:equation_introduction} with an isolated boundary singularity.

\section{Main assumptions and examples}\label{sec:preliminaries}

Throughout this paper, we assume that $\Omega \subset \R^N$ ($N \ge 3$) is a $C^2$ bounded domain and let $\delta(x)$ be the distance from $x \in \Omega$ to $\R^N \backslash \Omega$. We denote by $c, C, c_1, c_2, \ldots$ positive constants that may vary from one appearance to another and depend only on the data. The notation $C = C(a,b,\ldots)$ indicates the dependence of constant $C$ on $a,b,\ldots$ For two functions $f$ and $g$, we write $f \lesssim g$ ($f \gtrsim g$) if there exists a constant $C > 0$ such that $f \le C g$ ($f \ge C g$). We write $f \asymp g$ if $f \lesssim g$ and $g \lesssim f$.  We denote $a\vee b := \max\{ a,b\}$ and $a\wedge b := \min \{ a, b\}$. For a function $f$, the positive part and the negative part of $f$ are $f^+= f \vee0 $ and $f^-=(-f) \vee0 $ respectively.

\subsection{Main assumptions}\label{sec:pre_assumption} In this subsection, we provide a list of assumptions on the class of admissible operators $\L$. Although some assumptions are similar to those in \cite{TruongTai_2020}, we use independent labels in this work to enhance clarity and ensure self-contained readability. This is also because we employ some different, yet stronger, assumptions in an effort to achieve potentially better results. We note that some similar conditions have also been introduced in \cite{BonFigVaz_2018,Aba_2019,ChaGomVaz_2019_2020}.

\noindent \textbf{Assumptions on $\L$.} We consider the class of nonlocal operators $\mathbb{L}$ with the following properties.

\begin{enumerate}[label= (L\arabic{enumi}),ref=L\arabic{enumi}]
\item \label{eq_L1} The operator $\L: C^{\infty}_c(\Omega) \to L^2(\Omega)$ is defined in terms of measurable functions $J$ and $B$ via the formula
\begin{equation} \label{LJB}
\L u(x) := \mathrm{P.V.} \int_{\Omega}J(x,y)[u(x)-u(y)]dy + B(x)u(x), \quad u \in C^{\infty}_c(\Omega), x\in \Omega.
\end{equation}
Here, the so-called jump kernel $J$ is nonnegative,  symmetric, finite on $(\Omega \times \Omega) \setminus D_{\Omega}$, where $D_{\Omega}:= \left\{ (x,x): x \in \Omega \right\}$, and satisfies
$$\int_{\Omega} \int_{\Omega} |x-y|^2 J(x,y) dx dy < +\infty,$$
and $B$ is nonnegative, locally bounded in $\Omega$.
\end{enumerate}
The abbreviation P.V. in \eqref{LJB} stands for ``in the principal value sense", namely
\begin{align*} 
\L u(x) = \lim_{\varepsilon \to 0^+} \L_{\varepsilon} u(x),
\end{align*}
where
\begin{equation} \label{Lepsilon}
\L_{\varepsilon} u (x):= \int_{\Omega} J(x,y)(u(x)-u(y))\chi_{\varepsilon}(|x-y|) dy + B(x)u(x)
\end{equation}
with 
$$
\chi_{\varepsilon}(t):= \va{ &0 \quad &\text{if } 0 \leq t \le \varepsilon, \\ &1 \quad &\text{if } t > \varepsilon.}
$$

\begin{enumerate}[label= (L\arabic{enumi}),ref=L\arabic{enumi}, resume]
\item \label{eq_L2bis} For every $u \in C^{\infty}_c(\Omega)$, there exists a function $\varphi_u \in L^1_{\loc}(\Omega)$ and $\varepsilon_0 > 0$ such that $|\L_{\varepsilon} u| \le \varphi_u$ a.e. in $\Omega$, for every $\varepsilon \in (0,\varepsilon_0]$.
\end{enumerate}

Under assumptions \eqref{eq_L1} and \eqref{eq_L2bis}, the following integration-by-parts formula
\begin{equation}\label{eq:integrationbypart1}
\begin{aligned}
\int_{\Omega} v \L u dx  &= \dfrac{1}{2} \int_{\Omega}\int_{\Omega} J(x,y)(u(x)-u(y))(v(x)-v(y)) dx dy + \int_{\Omega} B(x) u(x) v(x) dx \\
& = \int_{\Omega} u \L v dx
\end{aligned}
\end{equation}
holds for every $u,v \in C^{\infty}_c(\Omega)$, see Proposition \ref{prop:integrationbypart}.  Hence, $\L$ is symmetric and nonnegative on $C^{\infty}_c(\Omega)$. From this, we consider the bilinear form
\begin{equation} \label{bilinear}
{\color{darkblue}\B(u,v)}:= \dfrac{1}{2} \int_{\Omega}\int_{\Omega}  J(x,y)(u(x)-u(y))(v(x)-v(y))dx dy + \int_{\Omega} B(x)u(x)v(x) dx
\end{equation}
on the domain
\begin{align*} 
\D(\B):= \left\{ u \in L^2(\Omega): \B(u,u) < +\infty \right\}.
\end{align*}
It can be seen that $\D(\B)$ is nonempty since $C^{\infty}_c(\Omega) \subset \D(\B)$ by \eqref{eq:integrationbypart1} and assumption \eqref{eq_L1}. In particular, $\D(\B)$ is dense in $L^2(\Omega)$ with respect to the $L^2-$norm. Furthermore, by a standard argument, it can be checked that $\D(\B)$ is a complete space with respect to the norm induced by the inner product
\begin{equation}\label{eq:innerproduct}
\inner{ u,v}_{\D(\B)} := \inner{ u,v}_{L^2(\Omega)} + \B(u,v),\quad u,v \in \D(\B).
\end{equation}
On the other hand, since $\L$ is symmetric and nonnegative on $C^{\infty}_c(\Omega)$,  it admits the Friedrich extension $\widetilde{\L}$, which is nonnegative and self-adjoint.  For the reader's convenience, we recall the construction of the extension. Let $\Hbb(\Omega)$ be the closure of $C^{\infty}_c(\Omega)$ in $\D(\B)$ under the norm induced by \eqref{eq:innerproduct}.  Since $\D(\B)$ is complete, $\Hbb(\Omega)$ is also complete. By \cite[Theorem 4.4.2]{Dav_1995}, the form $\B|_{\Hbb(\Omega) \times \Hbb(\Omega)}$ is induced by a non-negative self-adjoint operator $\widetilde{\L}$ which satisfies
$$ {\B(u,v) = \inner{\L u, v}_{L^2(\Omega)},\quad \forall u,  v \in C^{\infty}_c(\Omega).}$$
By density,  the above equalities also hold for all $u \in C^{\infty}_c(\Omega)$ and $v \in \Hbb(\Omega)$.  Finally,  by \cite[Lemma 4.4.1]{Dav_1995}, $C^{\infty}_c(\Omega) \subset \D(\widetilde{\L})$ and $\widetilde{\L}u = \L u$ for all $u \in C^{\infty}_c(\Omega)$. This proves that $\widetilde{\L}$ is an extension of $\L$. In fact, $\Hbb(\Omega) = \D(\widetilde{\L}^{\frac{1}{2}})$,  where $\widetilde{\L}^{\frac{1}{2}}$ is the square root of $\widetilde{\L}$ which is also nonnegative and self-adjoint (see \cite[page 104]{Dav_1980}). \textit{Hereafter, without any confusion we denote by $\L$ the extension $\widetilde{\L}$.}

While the extension $\widetilde{\L}$ and the space $\Hbb(\Omega)$ are well-defined in a general framework, it is necessary to constrain them to a specific context in order to deploy certain properties. More precisely, we impose the following condition:
\begin{enumerate}[resume,label=(L\arabic{enumi}),ref=L\arabic{enumi}] 
	\item \label{eq_L2} There exists a constant $\Lambda  > 0$ such that
	\begin{align*}
	\norm{u}_{L^2(\Omega)}^2 \le \Lambda\inner{\L u, u}_{L^2(\Omega)},\quad \forall u \in C^{\infty}_c(\Omega). 
	\end{align*}
\end{enumerate}
\eqref{eq_L2} is the classical coercivity assumption.  Under this assumption and the fact that $C^{\infty}_c(\Omega)$ is dense in $\Hbb(\Omega)$, we have
\begin{equation}\label{eq:lowerbound2}
\norm{u}_{L^2(\Omega)}^2 \le \Lambda \B(u,u),\quad \forall u \in \Hbb(\Omega).
\end{equation}
By \eqref{eq:lowerbound2}, it can be seen that $\norm{u}_{\D(\B)}$ and $\norm{u}_{\Hbb(\Omega)}:= \sqrt{\B(u,u)}$ are equivalent norms on $\Hbb(\Omega)$. The norm $\| \cdot\|_{\Hbb(\Omega)}$ is associated to the inner product 
\begin{equation} \label{innerH} \inner{u,v}_{\Hbb(\Omega)} := \B(u,v), \quad u,v \in \Hbb(\Omega).
\end{equation}
In addition, we assume 
\begin{enumerate}[resume,label=(L\arabic{enumi}),ref=L\arabic{enumi}]
\item \label{eq_L4} $\Hbb(\Omega) = H^s_{00}(\Omega)$, where $H^s_{00}(\Omega)$ is the Banach space defined by 
\begin{equation} \label{H00}
	H^s_{00}(\Omega) := \left\{ u \in H^s(\Omega): \dfrac{u}{\delta^s} \in L^2(\Omega) \right\}
\end{equation}
with the norm
\begin{equation} \label{H00-norm}
	\norm{u}^2_{H^s_{00}(\Omega)} := \int_{\Omega} \left(1+\dfrac{1}{\delta^{2s}}\right)|u|^2 dx + \int_{\Omega} \int_{\Omega} \dfrac{|u(x)-u(y)|^2}{|x-y|^{N+2s}}dx dy.
\end{equation}
\end{enumerate}
In fact, $H^{s}_{00}(\Omega)$ is the space of functions in $H^s(\R^N)$ supported in $\Omega$, or equivalently, the trivial extension of functions in $H^s_{00}(\Omega)$ belongs to $H^s(\R^N)$ (see \cite[Lemma 1.3.2.6]{Gris_2011}). Furthermore, $C^{\infty}_c(\Omega)$ is a dense subset of $H^s_{00}(\Omega)$.  Alternatively, fractional Sobolev spaces can be viewed as interpolation spaces due to \cite[Chapter 1]{LioMag_1972}.  For more details on fractional Sobolev spaces, the reader is referred to \cite{Bha_2012,BonSirVaz_2015, NezPalVal_2012} and the list of references therein.

\noindent \textbf{Assumptions on the inverse of $\mathbb{L}$.} With the inner product on $\Hbb(\Omega)$ induced by $\mathcal{E}(\cdot,\cdot)$ in \eqref{innerH},  it can be seen that the map $\mathcal{E}: \Hbb(\Omega) \times \Hbb(\Omega) \to \R$ is continuous and coercive on $\Hbb(\Omega)$.  By the classical Lax-Milgram Theorem,  for every $f \in L^2(\Omega)$,  there exists a unique function $u \in \Hbb(\Omega)$ which is the solution of the problem $\L u = f$ in the sense that
$$ \mathcal{E}(u,v) = (f,  v),\quad \forall v \in \Hbb(\Omega) .$$
We denote the solution operator by $\Gbb^{\Omega}$,  which is also known as the Green operator of $\L$.  In other words, $\Gbb^{\Omega}$ is the inverse of $\L$, namely 
\begin{equation} \label{eq:homolinear}
	\L [\Gbb^{\Omega} [f]] = f \text{ in } L^2(\Omega).
\end{equation}

In the following,  we characterize the inverse of $\mathbb{L}$ by some assumptions outlined below.

\begin{enumerate}[label=(G\arabic{enumi}),ref=G\arabic{enumi}]
    \item \label{eq_G2} The operator $\Gbb^{\Omega}$ admits a Green function $G^{\Omega}$, namely
$$
\Gbb^{\Omega}[f](x) = \int_{\Omega} G^{\Omega}(x,y)f(y)dy,\quad x \in \Omega,
$$    
where $G^{\Omega}: (\Omega \times \Omega) \setminus D_{\Omega} \to (0,\infty)$ is Borel measurable, symmetric and satisfies
\begin{equation} \label{G-est} 
G^{\Omega} (x,y) \asymp \dfrac{1}{|x-y|^{N-2s}} \left( \dfrac{\delta(x)}{|x-y|} \wedge 1 \right)^{\gamma} \left( \dfrac{\delta(y)}{|x-y|} \wedge 1 \right)^{\gamma}, \quad x,y \in \Omega, x \neq y,
\end{equation}
with $s,\gamma \in (0,1]$ and $N>2s$.
\end{enumerate}

The existence of the Green function and assumption \eqref{eq_G2},  known to be satisfied for several local and nonlocal operators, are the same as \cite[assumptions (G1)--(G2)]{TruongTai_2020}, which enables us to establish quantitative estimates for the Green operator, as shown in \cite{BonFigVaz_2018, Aba_2019}. We note that  two-sided estimate \eqref{G-est} can be derived under a series of additional assumptions on $J$ and $B$ (see for instance the recent interesting results \cite[Theorem 1.1]{KimSongVond_2023} and \cite[Theorem 1.1]{KimSongVond_2024}), however, it is out of the scope  of  the present paper. In addition to \eqref{eq_G2},  to carry on further analysis in this work we require the following two assumptions on the Green function which concern its continuity as well as its behavior up to the boundary. 

\begin{enumerate}[label=(G\arabic{enumi}),ref=G\arabic{enumi}, resume]
\item \label{eq_G3bis} $G^{\Omega}$ is jointly continuous in $(\Omega \times \Omega) \backslash D_{\Omega}$.
\item \label{eq_G4} For every $x_0 \in \Omega$ and $z_0 \in \partial \Omega$, the limit
\begin{equation}\label{eq:martinkernel_def}
M^{\Omega}(x_0,z_0):= \lim_{(x,y) \to (x_0,z_0) } \dfrac{G^{\Omega}(x,y)}{\delta(y)^{\gamma}}
\end{equation}
exists and $M^{\Omega} : \Omega \times \partial \Omega \to \R^+$ is a continuous function.
\end{enumerate}

Assumptions similar to \eqref{eq_G3bis} and \eqref{eq_G4} have been considered in \cite[(K4) and (K5)]{Aba_2019}.  Assumptions \eqref{eq_G3bis} and \eqref{eq_G4} describe the behavior of $G^{\Omega}$ in $\Omega \times \Omega \setminus D_{\Omega}$ and in $\Omega \times \partial \Omega$ respectively, which in turn play an important role in the derivation of the compactness properties of $\Gbb^{\Omega}$ in different weighted function or measure spaces.  The function $M^{\Omega}$ defined in \eqref{eq:martinkernel_def} is closely related to the Martin kernel defined in \cite{SonVon_2003} and in \cite[formula (4.10)]{Aba_2019}. In addition, \eqref{G-est}  implies (see also e.g. \cite[Remark 4.10]{Aba_2019})
\begin{equation} \label{M-est} M^\Omega(x,z) \asymp \dfrac{ \delta(x)^\gamma}{|x-z|^{N-2s+2\gamma}}, \quad x \in \Omega, z \in \partial \Omega.
\end{equation}

\subsection{Some examples}\label{sec:examples}Typical nonlocal operators satisfying \eqref{eq_L1}--\eqref{eq_L4} and \eqref{eq_G2}--\eqref{eq_G4} to be kept in mind are the restricted fractional Laplacian, the spectral fractional Laplacian and the censored fractional Laplacian. We remark that,  for these examples, assumptions  \eqref{eq_L2}, \eqref{eq_L4} and \eqref{eq_G2} are satisfied, as pointed out in \cite{TruongTai_2020}, while assumption \eqref{eq_L2bis} is fulfilled in light of \cite[Proposition 3.4.]{KimSonVon_2020}.  Therefore, it is sufficient to verify only \eqref{eq_L1}, \eqref{eq_G3bis} and \eqref{eq_G4} in each of the following cases. 

 \medskip

\textbf{The restricted fractional Laplacian (RFL).} A well-known example of nonlocal operators satisfying the assumptions in Subsection \ref{sec:pre_assumption} is the restricted fractional Laplacian, or simply the fractional Laplacian, defined for $s \in (0,1)$ by
$$(-\Delta)^s_{\RFL} u(x): = c_{N,s}\, \mathrm{P.V.} \int_{\R^N} \frac{u(x)-u(y)}{|x-y|^{N+2s}}dy,\quad x \in \Omega, $$
restricted to functions that are zero outside $\Omega$ with $c_{N,s} > 0$ being a normalizing constant. This operator has been extensively studied in the literature, see for instance \cite{CheSon_1998,Aba_2015,RosSer_2014,SerVal_2012,SerVal_2014} and references therein. In this case, estimate \eqref{G-est} holds true with $\gamma = s$; see, e.g. \cite[Corollary 1.3]{CheSon_1998}, which implies that $G^{\Omega}(x_0, \cdot) \asymp \delta^{s}$ near $\partial \Omega$.

 Furthermore,  for $u \in C^{\infty}_c(\Omega)$,  one can write
\begin{align*}
(-\Delta)^s_{\RFL} u(x) 
=c_{N,s}\mathrm{P.V.}  \int_{\Omega} \frac{u(x)-u(y)}{|x-y|^{N+2s}}dy + c_{N,s}\int_{\R^N \backslash \Omega} \dfrac{u(x)}{|x-y|^{N+2s}}dy,
\end{align*}
hence \eqref{eq_L1} holds with
$$J(x,y) = \dfrac{c_{N,s}}{|x-y|^{N+2s}} \quad \text{and} \quad B(x) = c_{N,s}\int_{\R^N \backslash \Omega} \dfrac{1}{|x-y|^{N+2s}}dy,\quad x,y \in \Omega, \, x\neq y. $$
Next, one can see that \eqref{eq_G3bis} holds by \cite[page 467]{CheSon_1998}. Finally, \eqref{eq_G4} is fulfilled by \cite[Lemma 6.5]{CheSon_1998}.

\begin{remark} \label{justifyG4}
Alternatively, \eqref{eq_G4} can be verified by making use of the Martin kernel $\widetilde{M}^{\Omega}$ defined on $\Omega \times \partial \Omega$ (see \cite[page 278]{CheSon_1998_2}) by
\begin{equation} \label{Martin-classical}
\widetilde M^{\Omega}(x,z) := \lim_{\Omega \ni y \to z}\frac{G^\Omega(x,y)}{G^{\Omega}(x^*,y)},
\end{equation}	
where $x^* \in \Omega$ is a fixed reference point. Note that $\widetilde M^{\Omega}$ is continuous on $\Omega \times \partial \Omega$ (see \cite[Theorem 3.9]{CheSon_1998_2}).  On the other hand,  for every fixed $x_0 \in \Omega$, the limit
\begin{equation}\label{eq:limit_martin_abatangelo}
\lim_{y \to z \in \partial \Omega} \dfrac{G^{\Omega}(x_0,y)}{\delta(y)^{s}}
\end{equation}
exists and the limit function is continuous on $\Omega \times \partial \Omega$. Since \eqref{eq:limit_martin_abatangelo} is in fact \eqref{eq_G4} by fixing the first variable, we write $M^{\Omega}(x_0,z)$ for the limit function with an abuse of notation. Let $(x_0,z_0) \in \Omega \times \partial \Omega$. For any $(x,y) \in (\Omega \times \Omega) \setminus D_{\Omega}$, we write
\begin{equation}\label{eq:Martinboundary}
\dfrac{G^{\Omega}(x,y)}{\delta(y)^{s}} = \dfrac{G^{\Omega}(x,y)}{G^{\Omega}(x^*,y)}\cdot \dfrac{G^{\Omega}(x^*,y)}{\delta(y)^{s}}.
\end{equation}
Letting $(x,y) \to (x_0,z_0)$ and keeping in mind the definitions \eqref{eq:martinkernel_def}, \eqref{Martin-classical}, we obtain
$$ M^{\Omega}(x_0,z_0) = \widetilde M^{\Omega}(x_0,z_0)M^{\Omega}(x^*,z_0).
$$ 
Therefore $M^{\Omega}$ is well-defined and continuous on $\Omega \times \partial \Omega$.
\end{remark}

\medskip

\textbf{The spectral fractional Laplacian (SFL).} 
The spectral fractional Laplacian has been studied in, e.g., \cite{SonVon_2003,AbaDup_2017,DhiMaaZri_2011,CafSti_2016,BraColPabSan_2013}.
It is defined for $s \in (0,1)$ and $u \in C^{\infty}_c(\Omega)$ by
\[(-\Delta)^s_{\SFL} u(x):= \mathrm{P.V.} \int_{\Omega} [u(x)-u(y)]J_s(x,y)dy + B_s(x) u(x),\quad x \in \Omega,\]
where $J_s = J_s(x,y)$ and $B_s = B_s(x,y)$ are given by
$$J_s(x,y) := \dfrac{s}{\Gamma(1-s)} \int_0^{\infty} K_{\Omega} (t,x,y) \dfrac{dt}{t^{1+s}}dt, \quad x,y \in \Omega,$$
and 
$$B_s(x) := \dfrac{s}{\Gamma(1-s)} \int_0^{\infty} \left(1- \int_{\Omega} K_{\Omega} (t,x,y) dy\right) \dfrac{dt}{t^{1+s}},\quad x \in \Omega.$$
Hence, $K_{\Omega}$ is the heat kernel of the classical Dirichlet Laplacian $-\Delta$ in $\Omega$.  Thus, in this case, \eqref{eq_L1} is fulfilled with
$$J(x,y) = J_s(x,y) \quad \text{and} \quad B(x) = B_s(x),\quad x,y \in \Omega, \, x \neq y,$$
and estimate \eqref{G-est} holds true with $\gamma = 1$ (see, e.g., \cite[Remark 1.6]{AbaDup_2017}), which implies that $G^{\Omega}(x_0, \cdot) \asymp \delta$ near $\partial \Omega$. Finally,  we see that \eqref{eq_G3bis} follows by \cite[Proposition 2.2]{SonVon_2003}. Finally, assumption \eqref{eq_G4} can be verified by using a similar argument as in Remark \ref{justifyG4} by writing 
$$
\dfrac{G^{\Omega}(x,y)}{\delta(y)} = \dfrac{G^{\Omega}(x,y)}{G^{\Omega}(x^*,y)}\cdot \dfrac{G^{\Omega}(x^*,y)}{\delta(y)}
$$ 
as in \eqref{eq:Martinboundary}. Then we make use of the joint continuity of the (classical) Martin kernel of $(-\Delta)_{\SFL}^s$ on $\Omega \times \partial \Omega$ (see \cite[page 264]{DhiMaaZri_2011}) for the first term and the continuity of the Poison kernel of $(-\Delta)_{\SFL}^s$ with respect to the second argument on $\partial \Omega$ for the second term (see \cite[Lemma 14]{AbaDup_2017}). Note that since $\Omega$ is a $C^2$ domain, the Poisson kernel is involved in stead of the Martin kernel.

 \medskip

\textbf{The censored fractional Laplacian (CFL).} The censored fractional Laplacian is defined for $ s > \frac{1}{2}$ by
\begin{align*}
(-\Delta)^s_{\CFL} u(x) := c_{N,s}\, \mathrm{P.V.} \int_{\Omega} \dfrac{u(x)-u(y)}{|x-y|^{N+2s}}dy, \quad x \in \Omega.
\end{align*}
This operator has been studied in \cite{ChenKim_2002,BogBurChe_2003,Che_2018,Fal_2020}.  It can be seen that \eqref{eq_L1} holds for
\[J(x,y) = \dfrac{c_{N,s}}{|x-y|^{N+2s}} \text{ and }B(x) = 0, \quad x,y \in \Omega, \, x \neq y.\]
In this case, estimate \eqref{G-est} with $\gamma = 2s - 1$, which implies that $G^{\Omega}(x_0, \cdot) \asymp \delta^{2s-1}$ near $\partial \Omega$.  \eqref{eq_G3bis} holds true due to \cite[Theorem 1.1]{ChenKim_2002}). Finally, assumption \eqref{eq_G4} can be verified by using a similar argument as in Remark \ref{justifyG4}, making use of the joint continuity of the Martin kernel of $(-\Delta)_{\CFL}^s$ on $\Omega \times \partial \Omega$ (see \cite[Theorem 1.2]{ChenKim_2002}) and the continuity of the limit function $G^{\Omega}(x,y)/\delta(x)^{2s-1}$ (see \cite[Remark 3.14]{Aba_2019}). 
\section{Main results and comparison with related works}\label{sec:result}
Before stating our main results, we recall standard definitions of function and measure spaces that will be used in the sequel. For $1 \leq q <\infty$ and $\alpha \in \R$,  let $L^q(\Omega,\delta^\alpha)$ be the weighted Lebesgue space defined by
$$
L^q(\Omega,\delta^{\alpha}) := \left\{ u \in L^1_{\loc}(\Omega): \norm{u}_{L^q(\Omega,\delta^{\alpha})}:= \left( \int_{\Omega}|u|^q \delta^{\alpha}dx \right)^{\frac{1}{q}}< +\infty \right\}
$$
and let
$$\delta^{\alpha} L^{\infty}(\Omega) := \{ \delta^{\alpha}u: u \in L^\infty(\Omega)\}. 
$$
For $\alpha \geq 0$, denote by $\M(\Omega,\delta^{\alpha})$ the space of Radon measures on $\Omega$ such that $\norm{\mu}_{\M(\Omega,\delta^{\alpha})} := \int_{\Omega} \delta^{\alpha} d|\mu| < +\infty$.  This space is also known as the dual space of the space of (weighted) continuous functions
\[ C(\overline{\Omega},\delta^{-\alpha}) := \left\{ f \in C(\overline{\Omega}): f\delta^{-\alpha} \in C(\overline{\Omega}) \right\}.\]
When $\alpha = 0$, we use notation $\M(\Omega)$ for the space of bounded Radon measures on $\Omega$. More details on the given spaces could be found in \cite{Gra_2009,MarVer_2014}.  The cone of positive measures in $\M(\Omega,\delta^{\alpha})$ is denoted by $\M^+(\Omega,\delta^{\alpha})$.
\subsection{Compactness of the Green operator} Let $G^{\Omega}$ be the Green function introduced in \eqref{eq_G2}--\eqref{eq_G3bis}.  For $f \in L^1(\Omega,\delta^{\gamma})$ and $\mu \in \M(\Omega,\delta^{\gamma})$, we define
\begin{align*}
\Gbb^{\Omega}[f](x)&:= \int_{\Omega} G^{\Omega}(x,y)f(y) dy, \quad x \in \Omega,\\
\Gbb^{\Omega}[\mu](x)&:= \int_{\Omega} G^{\Omega}(x,y) d\mu(y), \quad x \in \Omega.
\end{align*}
By assumption \eqref{eq_G3bis},  we know that $\Gbb^{\Omega}[\mu]$ is well-defined.  By \cite[Lemma 4.2]{TruongTai_2020} that $\Gbb^{\Omega}[\mu](x)$ is finite for a.e. $x \in \Omega$ if and only if $\mu \in \M(\Omega,\delta^\gamma)$.


Our first main result concerns compactness properties of $\Gbb^{\Omega}$ in weighted function and measure spaces. To begin, for $\beta \geq 0$ and $\alpha \geq \beta-2s$, we set
\begin{align*} 
p^*_{\beta, \alpha}:=\frac{N+\alpha}{N+\beta-2s}.
\end{align*}
In the case $\alpha=\beta=\gamma$, we simply write $p^*$ instead of $p_{\gamma,\gamma}^*$, namely
\begin{equation} \label{p*}
p^*=\frac{N+\gamma}{N+\gamma-2s}.
\end{equation}

\begin{theorem}[Compactness of $\Gbb^{\Omega}$]\label{theo:compactness} Assume that \eqref{eq_G2}--\eqref{eq_G3bis} hold and let $s,\gamma$ be as in \eqref{G-est}. 
	
$(i)$ Let $\gamma' \in [0,\gamma)$ and $\alpha$ satisfy
$$(-\gamma' - 1)\vee (\gamma' - 2s)\vee \left(- \frac{\gamma' N}{N - 2s + \gamma'}\right) < \alpha < \frac{\gamma' N}{N-2s}.$$
Then the map
$\Gbb^{\Omega}: \M(\Omega, \delta^{\gamma'}) \to L^q(\Omega, \delta^{\alpha})$ is compact for every $q < p^*_{\gamma', \alpha}=\frac{N+\alpha}{N+\gamma'-2s}$.

$(ii)$ In addition, if \eqref{eq_G4} holds, then statement $(i)$ holds for $\gamma' = \gamma$.

\end{theorem}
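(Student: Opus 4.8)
\emph{Proof strategy.} Fix $\gamma'$ and $\alpha$ as in the statement and set $\mathcal{F}:=\{\Gbb^{\Omega}[\mu]:\norm{\mu}_{\M(\Omega,\delta^{\gamma'})}\le 1\}$; since $\delta$ is bounded and $\gamma'\le\gamma$ we have $\M(\Omega,\delta^{\gamma'})\subset\M(\Omega,\delta^{\gamma})$, so every element of $\mathcal{F}$ is finite a.e.\ by \cite[Lemma 4.2]{TruongTai_2020}. The plan is to prove that $\mathcal{F}$ is relatively compact in $L^q(\Omega,\delta^{\alpha})$ by checking the three hypotheses of the Riesz--Fr\'echet--Kolmogorov theorem in this weighted space:
\begin{enumerate}[label=(\alph*)]
\item $\displaystyle\sup_{v\in\mathcal{F}}\norm{v}_{L^q(\Omega,\delta^\alpha)}<\infty$;
\item $\displaystyle\sup_{v\in\mathcal{F}}\norm{v}_{L^q(A_\beta,\delta^\alpha)}\to 0$ as $\beta\to0^+$, where $A_\beta:=\{x\in\Omega:\delta(x)<\beta\}$;
\item for every $\Omega'\Subset\Omega$, $\displaystyle\sup_{v\in\mathcal{F}}\norm{v(\cdot+h)-v}_{L^q(\Omega',\delta^\alpha)}\to 0$ as $h\to 0$.
\end{enumerate}
Once this is done, since $\Gbb^{\Omega}$ is linear and, by (a), bounded from $\M(\Omega,\delta^{\gamma'})$ into $L^q(\Omega,\delta^\alpha)$, it maps bounded sets to relatively compact sets, i.e.\ it is compact.

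For (a) and (b) I would invoke the weak-type a priori estimates for $\Gbb^{\Omega}$ that follow from the two-sided bound \eqref{G-est} (see \cite{Aba_2019,chan2020singular,TruongTai_2020}), namely that $\Gbb^{\Omega}\colon\M(\Omega,\delta^{\gamma'})\to M^{p^*_{\gamma',\alpha}}(\Omega,\delta^{\alpha})$ is bounded. Because $q<p^*_{\gamma',\alpha}$, the continuous embedding $M^{p^*_{\gamma',\alpha}}(\Omega,\delta^{\alpha})\hookrightarrow L^{q}(\Omega,\delta^{\alpha})$ gives (a) at once. For (b) one expands $\int_{A_\beta}|v|^q\delta^\alpha\,dx$ by the layer-cake formula, bounds the super-level sets through the $M^{p^*_{\gamma',\alpha}}(\Omega,\delta^\alpha)$-estimate and \eqref{Marcin-equi}, and uses the subcritical gap $q<p^*_{\gamma',\alpha}$ to turn the vanishing of the weighted mass of boundary strips into smallness; the admissible lower bounds imposed on $\alpha$ are exactly what make this boundary-strip contribution controllable and vanishing as $\beta\to0$.

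The core new ingredient is (c). By Minkowski's integral inequality, for $v=\Gbb^{\Omega}[\mu]$ and $\Omega'\Subset\Omega$ one has $\norm{v(\cdot+h)-v}_{L^q(\Omega',\delta^\alpha)}\le C(\Omega')\,\norm{\mu}_{\M(\Omega,\delta^{\gamma'})}\,\omega(h)$, where
\[
\omega(h):=\sup_{y\in\Omega}\ \delta(y)^{-\gamma'}\,\norm{\,G^{\Omega}(\cdot+h,y)-G^{\Omega}(\cdot,y)\,}_{L^q_x(\Omega')},
\]
so it suffices to show $\omega(h)\to0$ as $h\to0$. I would split the supremum over $y$ into a ``bulk'' part $\{\delta(y)\ge\eta\}$ and a ``boundary strip'' $\{\delta(y)<\eta\}$, with $\eta$ smaller than half of $\dist(\Omega',\partial\Omega)$ and fixed after $\Omega'$. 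On the bulk, $\delta(y)^{-\gamma'}\le\eta^{-\gamma'}$, and one further splits $\Omega'$ according to $|x-y|<\rho$ or $|x-y|\ge\rho$: the near-diagonal piece is controlled, via \eqref{G-est}, by $\int_{|z|<\rho+|h|}|z|^{-q(N-2s)}\,dz\lesssim(\rho+|h|)^{\,N-q(N-2s)}$, which is finite and small for $\rho,|h|$ small precisely because $q<p^*_{\gamma',\alpha}<\tfrac{N}{N-2s}$ --- the last inequality being equivalent to the upper constraint $\alpha<\tfrac{\gamma'N}{N-2s}$; the away-from-diagonal piece tends to $0$ with $h$ by the joint uniform continuity of $G^{\Omega}$ on compact subsets of $(\Omega\times\Omega)\setminus D_\Omega$, i.e.\ \eqref{eq_G3bis}. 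On the boundary strip, $x\in\Omega'$ and $\delta(y)<\eta$ force $|x-y|\ge\tfrac12\dist(\Omega',\partial\Omega)$, so we stay away from the diagonal. In case $(i)$, where $\gamma'<\gamma$, estimate \eqref{G-est} yields $\delta(y)^{-\gamma'}G^{\Omega}(x,y)\lesssim_{\Omega'}\delta(y)^{\gamma-\gamma'}\le\eta^{\gamma-\gamma'}$, so the boundary-strip contribution to $\omega(h)$ is $\lesssim\eta^{\gamma-\gamma'}$ uniformly in $h$ and is rendered negligible by choosing $\eta$ small. In case $(ii)$, $\gamma'=\gamma$ and this surplus disappears; instead, assumption \eqref{eq_G4} together with \eqref{eq_G3bis} guarantees that $(x,y)\mapsto G^{\Omega}(x,y)/\delta(y)^{\gamma}$ extends to a continuous --- hence uniformly continuous --- function on the compact set $\overline{\Omega'}\times\{x\in\overline{\Omega}:\delta(x)\le\eta\}$, the extension being $M^{\Omega}$ on $\partial\Omega$, which gives $\sup_{\delta(y)\le\eta}\delta(y)^{-\gamma}\norm{G^{\Omega}(\cdot+h,y)-G^{\Omega}(\cdot,y)}_{L^q_x(\Omega')}\to0$ as $h\to0$. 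Fixing $\eta$ and $\rho$ small and then letting $h\to0$ yields $\omega(h)\to0$, hence (c).

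The main obstacle is exactly this last point: making the modulus-of-continuity estimate (c) uniform both over the unit ball of $\M(\Omega,\delta^{\gamma'})$ and over all $y\in\Omega$, including $y\to\partial\Omega$, where the factor $\delta(y)^{-\gamma'}$ degenerates. When $\gamma'<\gamma$ this degeneration is absorbed by the surplus boundary decay $\delta(y)^{\gamma-\gamma'}$ of the Green function, which is why part $(i)$ requires only the interior continuity \eqref{eq_G3bis}; the borderline case $\gamma'=\gamma$ of part $(ii)$ loses this surplus, and it is precisely there that the boundary continuity of the Martin-type kernel, assumption \eqref{eq_G4}, becomes indispensable.
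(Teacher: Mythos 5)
Your proposal is correct in substance, but it follows a genuinely different route from the paper, so let me compare. The paper first proves compactness only into $L^1(K)$ for compact $K \subset\subset \Omega$, by applying Riesz--Fr\'echet--Kolmogorov after splitting the kernel as $G^{\Omega}=G^{\varepsilon}_{\beta}+H^{\varepsilon}$ with the radial cut-off $K^{\varepsilon}_{\beta}(t)=1\wedge (t/\varepsilon)^{\beta}$: the near-diagonal remainder $H^{\varepsilon}$ contributes $O(\varepsilon^{2s})$ in $L^1(K)$ (only $\int_0^{\varepsilon}t^{2s-1}dt$ is needed, no constraint from $q$ or $\alpha$), while the regularized part is handled by Lemma \ref{lem:technical}, which asserts uniform continuity of $G^{\varepsilon}_{\beta}(x,y)/\delta(y)^{\gamma'}$ on all of $\Omega\times\Omega$ --- with \eqref{eq_G4} entering exactly at $\gamma'=\gamma$, just as in your boundary-strip analysis. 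The passage from $L^1_{\loc}$ to the weighted space is then done by a diagonal extraction over an exhaustion, the Marcinkiewicz bound of Proposition \ref{prop:marcinestimate}, and Vitali's theorem. You instead run a weighted Riesz--Fr\'echet--Kolmogorov criterion directly in $L^q(\Omega,\delta^{\alpha})$: your (a) is the paper's \eqref{eq:Marcin2}; your boundary-tightness (b) plays the role of the paper's Vitali step (both implicitly use that $\int_{\{\delta<\beta\}}\delta^{\alpha}dx\to 0$, i.e. the finiteness of the weighted measure, so you introduce no gap the paper does not already have); and your translation-equicontinuity (c) replaces the regularization trick by a Minkowski bound plus a three-way splitting (near diagonal, far from diagonal in the bulk, boundary strip in $y$). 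The near-diagonal piece then requires the $L^q$-integrability of $|x-y|^{-(N-2s)}$, i.e. $q<\tfrac{N}{N-2s}$, which you correctly identify as equivalent to $\alpha<\tfrac{\gamma' N}{N-2s}$; so your argument uses the upper bound on $\alpha$ already in the equicontinuity step, whereas the paper uses it only through the Marcinkiewicz estimate. Your treatment of the strip is sound: for $\gamma'<\gamma$ the surplus $\delta(y)^{\gamma-\gamma'}$ from \eqref{G-est} suffices, and for $\gamma'=\gamma$ the continuous extension of $G^{\Omega}(x,y)/\delta(y)^{\gamma}$ by $M^{\Omega}$, guaranteed by \eqref{eq_G3bis}--\eqref{eq_G4}, gives uniform continuity on the compact product set (this is the same construction as the extension $\Psi_{\gamma}$ in the paper's Lemma \ref{lem:technical}(ii), localized in $x$). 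What each approach buys: the paper's regularized-kernel lemma is stated once on $\Omega\times\Omega$, keeps the translation estimate weight-free and valid already in $L^1$, and delegates all exponent bookkeeping to the a priori estimate; your direct argument avoids both the regularization and the diagonal-extraction/Vitali upgrade, at the cost of an explicit tightness verification and the $q<\tfrac{N}{N-2s}$ restriction in the diagonal estimate (automatic under the standing hypotheses). Either way the two essential inputs are the same: the weak-type estimate \eqref{eq:Marcin} and the uniform continuity of $G^{\Omega}(\cdot,\cdot)/\delta(\cdot)^{\gamma'}$ up to the boundary in the second variable, with \eqref{eq_G4} indispensable precisely at $\gamma'=\gamma$.
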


Let us give some remarks on the above result:

(a) Unlike the approach employed in case of the classical Laplacian \cite{MarVer_2014} or the RFL \cite[Proposition 2.6]{CheVer_2014} which relies essentially on the particular operator's inherent properties and available compact Sobolev embeddings,  our unified method, inspired by \cite[Proposition 5.1]{BonFigVaz_2018}, is based on a profound analysis of the Green function and its subtle regularized versions, making use of  Riesz--Fr\'echet--Kolmogorov theorem.  With properties \eqref{eq_G2}--\eqref{eq_G3bis} of Green functions, the method allows to deal with the space $\M(\Omega,\delta^{\gamma'})$ for any $\gamma' \in [0,\gamma)$.  Condition \eqref{eq_G4} is additionally imposed in statement (ii) to enable us to include also the space $\M(\Omega,\delta^{\gamma})$, which, by virtue of \eqref{G-est}, seems to be the largest weighted measure space for the compactness of $\Gbb^{\Omega}$ to hold.  

(b) Theorem \ref{theo:compactness} covers and extends several results in the literature. Indeed, the fact that the Green operator $\Gbb^{\Omega}$ of the classical Laplacian $-\Delta$ is compactly embedded from $\M(\Omega,\delta)$ to $L^q(\Omega)$ with $p < \frac{N}{N-1}$  (see \cite{MonPon_2008}) can be retrieved from statement (ii) with $\gamma=s=1$ and $\alpha=0$.  Moreover, the compactness from $L^1(\Omega,\delta^s)$ to $L^q(\Omega)$, $q < \frac{N}{N-s}$, of the Green operator associated to the RFL (see \cite[Proposition 2.6]{CheVer_2014}) is a special case of statement (ii) with $\gamma = s \in (0,1)$ and $\alpha=0$. In addition, statement (ii) with $s \in (0,1), \gamma=\alpha=1$ and with $s \in (\frac{1}{2},1), \gamma=2s-1, \alpha = 0$ reduce to a compactness result in the case of SFL and in the case of CFL, respectively.  {The compact property of the Green operators in the two latter cases,  or in general in the weighted setting with irregular data,  seems to be new to the best of our knowledge.}

It is worth mentioning that the value $p^*$ given by \eqref{p*} is the critical exponent for the existence of a solution to semilinear equations with a source power term (see \cite[Theorem 3.3]{TruongTai_2020}). We show below that this exponent also plays an important role in the solvability of problem  \eqref{eq:equation_introduction}--\eqref{eq:bdry-cond}.

\subsection{Semilinear elliptic equations}The above results are important tools in the study of the following problem
\begin{equation}\label{eq:semilinear_absorption}
\left\{ \begin{aligned}
\L u + g(u) &= \mu &&\text{ in }\Omega, \\
u  &= 0 &&\text{ on }\partial \Omega \text{ or in }{\R^N \backslash \Omega} \text{ (if applicable)},
\end{aligned} \right.
\end{equation}
where $\mu \in \M(\Omega,\delta^{\gamma})$ and $g: \R \to \R$ is a nondecreasing continuous function with $g(0)=0$. In order to study \eqref{eq:semilinear_absorption}, we adapt the notion of weak-dual solutions which was first introduced by Bonforte and V\'azquez \cite{BonVaz_2015} and then was used in several works (see e.g. \cite{BonFigVaz_2018,Aba_2019,ChaGomVaz_2019_2020,chan2020singular}).

\begin{definition} Assume that $\mu \in \M(\Omega,\delta^{\gamma})$. A function $u$ is called a \textit{weak-dual solution} of \eqref{eq:semilinear_absorption} if $u \in L^1(\Omega,\delta^{\gamma})$, $g(u) \in  L^1(\Omega,\delta^{\gamma})$ and
\begin{equation} \label{weakdualsol}\int_{\Omega} u\xi dx + \int_{\Omega} g(u)\Gbb^{\Omega}[\xi] dx = \int_{\Omega} \Gbb^{\Omega}[\xi] d\mu,\quad \forall \xi \in \delta^{\gamma}L^{\infty}(\Omega).
\end{equation}
\end{definition}
Here all the integrals in \eqref{weakdualsol} are well-defined under assumptions \eqref{eq_G2}--\eqref{eq_G3bis}. The notion of weak-dual solutions has interesting features. On the one hand, as one can see from formulation \eqref{weakdualsol}, an advantage of this notion in comparison with other concepts of solutions is that it involves only the Green function and does not require to specify the meaning of $\L \xi$ for test functions $\xi$. Moreover, it is equivalent to other notions of solutions given in \cite[Theorem 2.1]{ChaGomVaz_2019_2020} for which the space of test functions $\delta^{\gamma}L^{\infty}(\Omega)$ is replaced by $L^{\infty}_c(\Omega)$ or $C^{\infty}_c(\Omega)$. 
On the other hand, since the operator $\L$ does not appear in \eqref{weakdualsol}, its properties cannot be directly exploited from this formulation, which therefore requires an analysis mainly on the Green function.

Note that $u$ is a weak-dual solution of problem \eqref{eq:semilinear_absorption} if and only if
\begin{equation} \label{representation}
u + \Gbb^{\Omega}[g(u)] = \Gbb^{\Omega}[\mu] \quad \text{a.e. in } \Omega.	
\end{equation}

In light of  Theorem \ref{theo:compactness}, Theorem \ref{thm:kato} and the ideas in \cite{Pon_2016}, we are able to obtain the solvability for \eqref{eq:semilinear_absorption}.

\begin{theorem}\label{theo:subsupersolution} Assume that \eqref{eq_L1}--\eqref{eq_L4} and \eqref{eq_G2}--\eqref{eq_G4} hold and $\mu \in \M(\Omega,\delta^{\gamma})$. Let $g: \R \to \R$ be a nondecreasing continuous function with $g(0)=0$. Assume in addition that
\begin{equation}\label{eq:goodmeasure}
g(-\Gbb^{\Omega}[\mu^-]), g(\Gbb^{\Omega}[\mu^+]) \in L^1(\Omega,\delta^{\gamma}).
\end{equation}
Then problem \eqref{eq:semilinear_absorption} admits a unique weak-dual solution $u$. This solution satisfies
\begin{equation} \label{Gmu+-}-\Gbb^{\Omega}[\mu^-] \le u \le \Gbb^{\Omega}[\mu^+] \quad \text{a.e. in } \Omega.
\end{equation}
Moreover, there exists a positive constant $C=C(N,\Omega,s,\gamma)$ such that
\begin{equation} \label{est:apriori}
	\| u \|_{L^1(\Omega,\delta^\gamma)}	+ \| g(u) \|_{L^1(\Omega,\delta^\gamma)} \leq C \, \| \mu \|_{\M(\Omega,\delta^\gamma)}.
\end{equation}
Furthermore, the map $\mu \mapsto u$ is nondecreasing.

\end{theorem}

When the data belong to $L^1(\Omega,\delta^\gamma)$, condition \eqref{eq:goodmeasure} can be relaxed as pointed out in the following result.
\begin{corollary} \label{cor:L1data} Assume that \eqref{eq_L1}--\eqref{eq_L4} and \eqref{eq_G2}--\eqref{eq_G4} hold. For any $f \in L^1(\Omega,\delta^{\gamma})$, problem \eqref{eq:semilinear_absorption} with $\mu=f$ admits a unique weak-dual solution. Furthermore, the map $f \mapsto u$ is increasing.
\end{corollary}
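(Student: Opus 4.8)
The plan is to obtain $u$ as a limit of solutions to truncated problems, to which Theorem~\ref{theo:subsupersolution} applies, and to quantify the convergence by means of Kato's inequality (Theorem~\ref{thm:kato}). First I would truncate, setting $f_n:=(-n)\vee(f\wedge n)$, so that $f_n\in L^\infty(\Omega)$, $|f_n|\le|f|$, and $f_n\to f$ in $L^1(\Omega,\delta^{\gamma})$ and a.e.\ in $\Omega$. From the bound $G^{\Omega}(x,y)\lesssim|x-y|^{2s-N}$ in \eqref{G-est} and $N>2s$, the Green operator maps $L^\infty(\Omega)$ into $L^\infty(\Omega)$, hence $g(\pm\Gbb^{\Omega}[f_n^{\pm}])\in L^\infty(\Omega)\subset L^1(\Omega,\delta^{\gamma})$; thus hypothesis~\eqref{eq:goodmeasure} holds for the datum $f_n$, and Theorem~\ref{theo:subsupersolution} provides a unique weak-dual solution $u_n$ of \eqref{eq:semilinear_absorption} with $\mu=f_n$, with $g(u_n)\in L^1(\Omega,\delta^{\gamma})$. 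Testing \eqref{weakdualsol} against $\xi\in C^{\infty}_c(\Omega)\subset\delta^{\gamma}L^\infty(\Omega)$ and using the symmetry of $G^{\Omega}$ and Fubini's theorem yields the representation $u_n=\Gbb^{\Omega}[f_n-g(u_n)]$ a.e.\ in $\Omega$, with $f_n-g(u_n)\in L^1(\Omega,\delta^{\gamma})$.

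The core step is a contraction estimate. Let $\varphi_1$ be the principal eigenfunction of $\L$; it satisfies $\varphi_1\sim\delta^{\gamma}$ and $\Gbb^{\Omega}[\varphi_1]=\lambda_1^{-1}\varphi_1$, so that $\varphi_1\in\delta^{\gamma}L^\infty(\Omega)$ and $\Gbb^{\Omega}[\varphi_1]\ge0$. Writing $u_n-u_m=\Gbb^{\Omega}[(f_n-f_m)-(g(u_n)-g(u_m))]$, applying Kato's inequality \eqref{eq:kato_abs} with $\xi=\varphi_1$, and using that $g$ is nondecreasing (so that $\sgn(u_n-u_m)(g(u_n)-g(u_m))=|g(u_n)-g(u_m)|$ a.e.), we obtain
\begin{equation*}
\int_{\Omega}|u_n-u_m|\,\varphi_1\,dx+\lambda_1^{-1}\int_{\Omega}|g(u_n)-g(u_m)|\,\varphi_1\,dx\le\lambda_1^{-1}\int_{\Omega}|f_n-f_m|\,\varphi_1\,dx.
\end{equation*}
Since $\varphi_1\sim\delta^{\gamma}$, this gives $\norm{u_n-u_m}_{L^1(\Omega,\delta^{\gamma})}+\norm{g(u_n)-g(u_m)}_{L^1(\Omega,\delta^{\gamma})}\lesssim\norm{f_n-f_m}_{L^1(\Omega,\delta^{\gamma})}\to0$, so $\{u_n\}$ and $\{g(u_n)\}$ are Cauchy in $L^1(\Omega,\delta^{\gamma})$; passing to a subsequence with $u_n\to u$ a.e., continuity of $g$ forces $g(u_n)\to g(u)$ in $L^1(\Omega,\delta^{\gamma})$, and in particular $u,g(u)\in L^1(\Omega,\delta^{\gamma})$.

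To conclude existence I would pass to the limit in \eqref{weakdualsol}. For $\xi\in C^{\infty}_c(\Omega)$ one has $|\Gbb^{\Omega}[\xi]|\lesssim\delta^{\gamma}$ (by the Green bounds of Section~\ref{sec:lineartheory}, since $\operatorname{supp}\xi$ is a compact subset of $\Omega$), and $\xi$ is bounded with compact support; hence each of the three terms of the identity for $u_n$ converges to the corresponding term for $u$, using the $L^1(\Omega,\delta^{\gamma})$-convergences of $u_n$, $g(u_n)$, $f_n$. Thus $u$ satisfies \eqref{weakdualsol} for all $\xi\in C^{\infty}_c(\Omega)$, and by the equivalence with the weak-dual formulation recorded after Definition~\ref{def:semilinear_weaksolution}, $u$ is a weak-dual solution of \eqref{eq:semilinear_absorption} with $\mu=f$. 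For uniqueness, if $u,\tilde u$ are two such solutions then $u-\tilde u=\Gbb^{\Omega}[-(g(u)-g(\tilde u))]$, and \eqref{eq:kato_abs} with $\xi=\varphi_1$ together with monotonicity of $g$ gives $\int_{\Omega}|u-\tilde u|\,\varphi_1\,dx\le-\lambda_1^{-1}\int_{\Omega}|g(u)-g(\tilde u)|\,\varphi_1\,dx\le0$, whence $u=\tilde u$ a.e.\ since $\varphi_1>0$. This makes $f\mapsto u$ well defined; if $f_1\le f_2$ with solutions $u_1,u_2$, then $u_1-u_2=\Gbb^{\Omega}[(f_1-f_2)-(g(u_1)-g(u_2))]$, and Kato's inequality \eqref{eq:kato_main} with $\xi=\varphi_1$, combined with $\sgn^+(u_1-u_2)(f_1-f_2)\le0$ and $\sgn^+(u_1-u_2)(g(u_1)-g(u_2))\ge0$, yields $\int_{\Omega}(u_1-u_2)^+\varphi_1\,dx\le0$, i.e.\ $u_1\le u_2$ a.e.

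The truncation bookkeeping and the limiting argument are routine; the heart of the matter is the contraction estimate. Its two ingredients are the algebraic absorption of the $|g(u_n)-g(u_m)|$ term — the precise mechanism by which the absorption term tolerates an arbitrary continuous nondecreasing $g$, in contrast with source terms — and the use of a test function (here $\varphi_1$) for which Kato's inequality reads as a genuine inequality between $L^1(\Omega,\delta^{\gamma})$ norms; the comparability $\varphi_1\sim\delta^{\gamma}$ with the distance function, itself a consequence of the Green estimates \eqref{G-est}, is exactly what is exploited.
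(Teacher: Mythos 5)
Your argument is correct and is essentially the paper's proof: truncate to bounded data, for which \eqref{eq:goodmeasure} holds trivially so Theorem~\ref{theo:subsupersolution} applies; then use Kato's inequality \eqref{eq:kato_abs} together with the monotonicity of $g$ to get a Cauchy estimate for both $u_n$ and $g(u_n)$ in $L^1(\Omega,\delta^{\gamma})$, pass to the limit in the Green identity, and obtain uniqueness and order preservation again from \eqref{eq:kato_abs} and \eqref{eq:kato_main}. The only substantive deviation is the test function in the contraction step: the paper simply takes $\xi=\delta^{\gamma}$ and uses $\Gbb^{\Omega}[\delta^{\gamma}]\sim\delta^{\gamma}$ (quoted from \cite{Aba_2019}), whereas you invoke a principal eigenfunction $\varphi_1$ with $\Gbb^{\Omega}[\varphi_1]=\lambda_1^{-1}\varphi_1$ and $\varphi_1\sim\delta^{\gamma}$; these facts are true for this class of operators (see \cite{BonFigVaz_2018,ChaGomVaz_2019_2020}) but are neither proved nor stated in the paper, so you should either cite them or, more economically, replace $\varphi_1$ by $\delta^{\gamma}$, which makes the same mechanism work with an estimate the paper already uses.
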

As a consequence of Theorem \ref{theo:subsupersolution}, the existence and uniqueness remain to hold when $g$ satisfies the so-called \textit{subcritical integral condition}
\begin{equation} \label{sub-int} \int_1^{\infty} [g(t) - g(-t)]t^{-1-p^*}dt < +\infty.
\end{equation}
\begin{theorem} \label{measuredata-sub} Assume that \eqref{eq_L1}--\eqref{eq_L4} and \eqref{eq_G2}--\eqref{eq_G4} hold. Let $g: \R \to \R$ be a nondecreasing continuous function satisfying \eqref{sub-int} and $g(0)=0$. Then for any $\mu \in \M(\Omega,\delta^{\gamma})$, problem \eqref{eq:semilinear_absorption} admits a unique weak-dual solution $u$. Furthermore, the solution satisfies \eqref{Gmu+-}, \eqref{est:apriori} and the map $\mu \mapsto u$ is nondecreasing.
\end{theorem}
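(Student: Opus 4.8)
The plan is to deduce Theorem \ref{measuredata-sub} from Theorem \ref{theo:subsupersolution}. All the structural hypotheses \eqref{eq_L1}--\eqref{eq_L3} and \eqref{eq_G1}--\eqref{eq_G4} are assumed, and $g$ is nondecreasing, continuous with $g(0)=0$; hence the only thing to be checked is that the subcritical integral condition \eqref{sub-int} forces the ``good measure'' condition \eqref{eq:goodmeasure}, that is, $g(\Gbb^{\Omega}[\mu^{+}])\in L^1(\Omega,\delta^{\gamma})$ and $g(-\Gbb^{\Omega}[\mu^{-}])\in L^1(\Omega,\delta^{\gamma})$, for \emph{every} $\mu\in\M(\Omega,\delta^{\gamma})$. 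Once this is established, Theorem \ref{theo:subsupersolution} directly provides the existence and uniqueness of a weak-dual solution $u$, the two-sided bound \eqref{Gmu+-}, and the monotonicity of the map $\mu\mapsto u$.

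To verify that \eqref{sub-int} implies \eqref{eq:goodmeasure} I would first recall the endpoint Marcinkiewicz estimate for the Green potential: by the two-sided bound \eqref{G-est}, the operator $\Gbb^{\Omega}$ maps $\M(\Omega,\delta^{\gamma})$ boundedly into the borderline space $M^{p^*}(\Omega,\delta^{\gamma})$, so that $w:=\Gbb^{\Omega}[\mu^{\pm}]$ is a nonnegative function in $M^{p^*}(\Omega,\delta^{\gamma})$ (here one uses $\mu^{\pm}\in\M(\Omega,\delta^{\gamma})$, which holds since $|\mu|=\mu^{+}+\mu^{-}$); see \cite{TruongTai_2020}, see also \cite{Aba_2019,chan2020singular}. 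It then suffices to prove: \emph{if $w\ge0$ belongs to $M^{p^*}(\Omega,\delta^{\gamma})$, then \eqref{sub-int} forces $g(w),\,g(-w)\in L^1(\Omega,\delta^{\gamma})$}. Set $d\omega:=\delta^{\gamma}dx$, a finite measure on $\Omega$ because $\Omega$ is a bounded $C^2$ domain and $\gamma\in(0,1]$. On $\{w>1\}$ we may write $g(w(x))=g(1)+\int_{(1,\,w(x)]}dg$; combining this with the Fubini--Tonelli theorem and the Marcinkiewicz bound $\omega(\{w\ge t\})\le\vertiii{w}_{M^{p^*}(\Omega,\delta^{\gamma})}^{p^*}t^{-p^*}$, and estimating $g(w)\le g(1)$ on $\{w\le1\}$, one obtains
\begin{equation*}
\int_{\Omega}g(w)\,d\omega\ \le\ 2\,g(1)\,\omega(\Omega)+\vertiii{w}_{M^{p^*}(\Omega,\delta^{\gamma})}^{p^*}\int_{(1,\infty)}t^{-p^*}\,dg(t).
\end{equation*}
A Lebesgue--Stieltjes integration by parts identifies the last integral with $-g(1)+p^*\int_1^{\infty}g(t)\,t^{-1-p^*}\,dt$, the boundary term at infinity vanishing because $g(t)\,t^{-p^*}\to0$ as $t\to\infty$, which is a consequence of $g$ being nondecreasing together with $\int_1^{\infty}g(t)\,t^{-1-p^*}\,dt<\infty$; and this last integral is finite by \eqref{sub-int}, since $g(t)-g(-t)\ge g(t)\ge0$. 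Hence $g(w)\in L^1(\Omega,\delta^{\gamma})$. Applying the same reasoning to the nondecreasing function $\tilde g(t):=-g(-t)$, which satisfies $\tilde g(0)=0$ and $\int_1^{\infty}\tilde g(t)\,t^{-1-p^*}\,dt<\infty$ (again by \eqref{sub-int}), gives $|g(-w)|=\tilde g(w)\in L^1(\Omega,\delta^{\gamma})$.

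Taking $w=\Gbb^{\Omega}[\mu^{+}]$ in the first half and $w=\Gbb^{\Omega}[\mu^{-}]$ in the second half of the claim yields precisely \eqref{eq:goodmeasure}, so Theorem \ref{theo:subsupersolution} applies and delivers all the conclusions of Theorem \ref{measuredata-sub}. I expect the only slightly delicate point to be the change-of-variables / integration-by-parts step when $g$ is merely continuous and nondecreasing (possible flat pieces, no strict monotonicity); this is handled either by a direct Lebesgue--Stieltjes computation or, if one prefers to sidestep it, by approximating $g$ from below by strictly increasing continuous functions and passing to the limit via monotone convergence. The one structural input beyond the theorems already stated is the endpoint estimate $\Gbb^{\Omega}\colon\M(\Omega,\delta^{\gamma})\to M^{p^*}(\Omega,\delta^{\gamma})$; note that the compactness statement in Theorem \ref{theo:compactness} only gives $\Gbb^{\Omega}[\mu^{\pm}]\in L^{q}(\Omega,\delta^{\gamma})$ for $q<p^*$, which is just short of what is needed, so it is the borderline Marcinkiewicz space that must be invoked here.
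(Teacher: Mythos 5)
Your proposal is correct and follows essentially the same route as the paper: the paper isolates exactly your key claim as Lemma \ref{lem:subint}, proved via the endpoint Marcinkiewicz bound of Proposition \ref{prop:marcinestimate}, the distribution function of $\Gbb^{\Omega}[\mu]$ and a Stieltjes integration by parts (with the decay $g(\lambda)\lambda^{-p^*}\to 0$ extracted along a sequence), and then deduces Theorem \ref{measuredata-sub} from Theorem \ref{theo:subsupersolution}. Your layer-cake/Fubini bookkeeping and your applying the estimate to $\mu^{+}$ and $\mu^{-}$ rather than to $|\mu|$ are only cosmetic variations.
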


Next we focus on the case where $g$ is a power function, namely $g(t)=|t|^{p-1}t$ with $p>1$. 
In this case, problem \eqref{eq:semilinear_absorption} becomes
\begin{equation}\label{eq:up}
	\left\{ \begin{aligned}
		\L u + |u|^{p-1}u &= \mu &&\text{ in }\Omega, \\
		u  &= 0 &&\text{ on }\partial \Omega \text{ or in }\R^N \backslash \Omega \text{ (if applicable)},
	\end{aligned} \right.
\end{equation}
and condition \eqref{sub-int} holds if and only if $1<p<p^*$.
By Theorem \ref{measuredata-sub}, if $1<p<p^*$ then problem \eqref{eq:up} has a unique weak-dual solution. 

The case $p \geq p^*$ is more challenging and requires a profound analysis in a suitable capacitary setting. We will make use of  Bessel capacities (see the definition in Section \ref{sec:sufficientcondition}).

We say that a measure $\nu$ in $\Omega$ is \textit{absolutely continuous} with respect to the  capacity $\mathrm{Cap}_{\alpha,q}$ if
$$ \forall E \subset \Omega, \, E \text{ is Borel}, \quad  \mathrm{Cap}_{\alpha,q}(E) = 0 \Longrightarrow |\nu|(E) = 0.
$$ 

The next theorem provides a sufficient condition expressed in terms of a suitable Bessel capacity under which problem \eqref{eq:up} has a (unique) weak-dual solution.  
\begin{theorem} \label{thm:supercritical}
Assume that \eqref{eq_L1}--\eqref{eq_L4} and \eqref{eq_G2}--\eqref{eq_G4} hold. Let $\mu \in \M(\Omega,\delta^\gamma)$,  $d\nu = \delta^\gamma d\mu$ and $p \in (1,\infty)$ such that $s-\frac{\gamma}{2p'}>0$. If $\nu$ is absolutely continuous with respect to the Bessel capacity $\mathrm{Cap}_{2s-\frac{\gamma}{p'},p'}$ then problem \eqref{eq:up} has a unique weak-dual solution. Moreover the solution satisfies \eqref{Gmu+-} and \eqref{est:apriori}.  
\end{theorem}

The characterization of measures in terms of Bessel capacities for the solvability of nonlinear elliptic equations has been first obtained by Baras and Pierre in \cite{BarPie-1984} for the case of Laplacian ($s = 1$ and $\gamma = 1$), and has been obtained by Chen and V\'eron \cite{CheVer_2014} for the case of RFL ($s=\gamma \in (0,1)$).  
Theorem \ref{thm:supercritical} extends the sufficient condition established in the mentioned cases to a weighted measure space, and seems to be new in case of SFL with $s > \frac{1}{2}$ and CFL. It is not known if the absolute continuity of $\nu$ is also a necessary condition for the solvability of the problem \eqref{eq:up}. Since the techniques in \cite{BarPie-1984,CheVer_2014} cannot be adapted to the context of weak-dual solutions, it entails a different approach. 

It is worth noting that when $1<p<p^*$, the only set with zero $\mathrm{Cap}_{2s-\frac{\gamma}{p'},p'}$-capacity is the empty set, hence by Theorem \ref{thm:supercritical}, problem \eqref{eq:up} has a unique weak-dual solution.

We close this section with a remark on the existence of a solution to \eqref{eq:equation_introduction} with an isolated boundary singularity which is a by-product. Let $z \in \partial \Omega$ and $\{z_n\}_{n \in \N} \subset \Omega$ converging to $z$. When $1<p<p^*$, for any $n \in \N$, there exists a unique weak-dual solution $u_n$ to \eqref{eq:up} with $\mu=\delta^{-\gamma}\delta_{z_n}$, where $\delta_{z_n}$ denotes the Dirac measure concentrated at $z_n$. In Appendix \ref{sec:boundarysolution}, we will use the compactness property of the Green operator to show that when $n \to \infty$, the corresponding solution $u_n$ converges to a function $u$ that is singular at $z$ and has the same blow-up rate at $z$ as $M^\Omega(\cdot,z)$, where $M^\Omega(\cdot,z)$ is given in \eqref{eq:martinkernel_def} (see Proposition \ref{bdry-iso}).

\section{Compactness of the Green operator}\label{sec:lineartheory}

Let us start with the definition of weighted Lebesgue spaces. 
The weighted Marcinkiewicz spaces, or weak-Lebesgue spaces, $M^q(\Omega,\delta^{\alpha})$, $1 \le q < \infty$,  are defined by
$$
M^q(\Omega,\delta^{\alpha}):= \left\{ u \in L^1_{\loc}(\Omega):  \sup_{\lambda > 0} \lambda^q \int_{\Omega} \1_{\{ x \in \Omega: |u(x)| > \lambda \} } \delta^{\alpha} dx < +\infty \right\},
$$
where $\1_{E}$ denotes the indicator function of a set $E \subset \R^N$. Put
$$
\vertiii{u}_{M^q(\Omega,\delta^\alpha)}:= \left( \sup_{\lambda > 0} \lambda^q \int_{\Omega} \1_{\{ x \in \Omega: |u(x)| > \lambda \} } \delta^{\alpha} dx \right)^{\frac{1}{q} }.
$$
We note that $\vertiii{\cdot}_{M^q(\Omega,\delta^\alpha)}$ is not a norm, but for $q >1$, it is equivalent to the norm
$$
\norm{u}_{M^q(\Omega,\delta^\alpha)}:= \sup \left\{ \frac{\int_{A} |u|^q \delta^\alpha dx }{ (\int_{A} \delta^\alpha dx)^{1-\frac{1}{q}} }: A \subset \Omega, A \text{ measurable }, 0< \int_{A} \delta^\alpha dx < + \infty. \right\}.
$$
In fact, there hold (see \cite[page 300-311]{CiCo})
\begin{equation} \label{Marcin-equi}
	\vertiii{u}_{M^q(\Omega,\delta^\alpha)} \leq \norm{u}_{M^q(\Omega,\delta^\alpha)} \leq \frac{q}{q-1}\vertiii{u}_{M^q(\Omega,\delta^\alpha)}, \quad \forall u \in M^q(\Omega,\delta^\alpha).
\end{equation}

It is well-known that the following embeddings hold
$$L^q(\Omega,\delta^{\alpha}) \subset M^q(\Omega,\delta^{\alpha}) \subset L^{r}(\Omega,\delta^{\alpha}),\quad \forall r \in [1,q),$$
where all the inclusions are strict and continuous, see for instance \cite[Section 2.2]{BidViv_2000} and \cite[Section 1.1]{Gra_2009}. We recover the usual $L^q$ spaces and weak-$L^q$ spaces if $\alpha = 0$ and simply write $L^q(\Omega)$ and $M^q(\Omega)$ respectively. 

The following result shows the continuity of $\Gbb^{\Omega}$.

\begin{proposition}\label{prop:marcinestimate} Assume that \eqref{eq_G2}--\eqref{eq_G3bis} hold. Let $\gamma' \in [0,\gamma]$ and $\alpha$ satisfy
\begin{align*}
\left(-\gamma' - 1\right) \vee \left(\gamma' - 2s\right) \vee \left(- \frac{\gamma' N}{N - 2s + \gamma'}\right) < \alpha < \frac{\gamma' N}{N-2s}.
\end{align*}
Then the map
\begin{equation}\label{eq:Marcin}
\Gbb^{\Omega}: \M(\Omega,\delta^{\gamma'}) \to M^{p_{\gamma', \alpha}^*}(\Omega,\delta^{\alpha})
\end{equation}
is continuous (recall that $p_{\gamma',\alpha}^*=\frac{N+\alpha}{N+\gamma'-2s}$).
In particular, for any $q \in [1,p_{\gamma',\alpha})$ the map
\begin{equation}\label{eq:Marcin2}
\Gbb^{\Omega}: \M(\Omega,\delta^{\gamma'}) \to L^q(\Omega,\delta^{\alpha})
\end{equation}
is continuous.
\end{proposition}

The desired result can be obtained by proceeding as in the proof of \cite[Proposition 4.6]{TruongTai_2020}, replacing  $\gamma$ by $\gamma'$. Therefore we omit the proof.

In order to prove Theorem \ref{theo:compactness}, we need the following technical result.


\begin{lemma}\label{lem:technical}
Assume that \eqref{eq_G2}--\eqref{eq_G3bis} hold.
For $\varepsilon > 0$ and $\beta > N-2s+2\gamma$, we define $K^{\varepsilon}_{\beta} : \R^+\to \R^+$ as 
\begin{equation} \label{Keb}
K^{\varepsilon}_{\beta} (t) :=1 \wedge \left(t^{\beta}\varepsilon^{-\beta}\right), \quad t \in \R^+.
\end{equation}
Put
\begin{equation} \label{eq:Gepsilon} 
G^{\varepsilon}_{\beta}(x,y) :=  \left\{ \begin{aligned}
&G^{\Omega}(x,y)K^{\varepsilon}_{\beta}(|x-y|) \quad &&\text{if } (x,y) \in (\Omega \times \Omega) \backslash D_{\Omega},\\
&0 \quad &&\text{if } (x,y) \in D_{\Omega},
\end{aligned}\right.
\end{equation}
where $G^{\Omega}$ is the Green function of $\L$.

(i) For every $\gamma' < \gamma$, the map
$$(x,y) \mapsto \dfrac{G^{\varepsilon}_{\beta}(x,y)}{\delta(y)^{\gamma'}}$$
is uniformly continuous in $\Omega \times \Omega$. 

(ii) If, in addition, \eqref{eq_G4} holds, then the map
$$(x,y) \mapsto \dfrac{G^{\varepsilon}_{\beta}(x,y)}{\delta(y)^{\gamma}}$$
is uniformly continuous in $\Omega \times \Omega$.
\end{lemma}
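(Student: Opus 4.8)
The plan is to show that $G^{\varepsilon}_{\beta}(x,y)/\delta(y)^{\gamma'}$ is bounded and continuous on $(\Omega\times\Omega)\setminus D_\Omega$, that it extends continuously by $0$ across the diagonal $D_\Omega$, and finally that it extends continuously up to the boundary; then, since the resulting function is continuous on the compact set $\overline\Omega\times\overline\Omega$, it is automatically uniformly continuous there, and a fortiori on $\Omega\times\Omega$. First I would record the pointwise bound: by \eqref{eq:Green0}, $G^{\Omega}(x,y)\lesssim \delta(y)^{\gamma}|x-y|^{-(N-2s+\gamma)}$, so for $\gamma'\le\gamma$,
\begin{equation*}
\frac{G^{\varepsilon}_{\beta}(x,y)}{\delta(y)^{\gamma'}}\lesssim \delta(y)^{\gamma-\gamma'}|x-y|^{-(N-2s+\gamma)}\,\bigl(1\wedge |x-y|^{\beta}\varepsilon^{-\beta}\bigr).
\end{equation*}
Since $\beta>N-2s+2\gamma>N-2s+\gamma$, the factor $K^{\varepsilon}_{\beta}(|x-y|)$ kills the singularity: when $|x-y|\le\varepsilon$ the right-hand side is $\lesssim \delta(y)^{\gamma-\gamma'}\varepsilon^{-\beta}|x-y|^{\beta-(N-2s+\gamma)}\to 0$ as $|x-y|\to 0$, and when $|x-y|>\varepsilon$ it is $\lesssim \delta(y)^{\gamma-\gamma'}\varepsilon^{-(N-2s+\gamma)}$, which is bounded on $\Omega$ by $\diam(\Omega)^{\gamma-\gamma'}\varepsilon^{-(N-2s+\gamma)}$ (using $\gamma-\gamma'\ge 0$). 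This shows the function is bounded and tends to $0$ as $(x,y)$ approaches the diagonal, so the extension by $0$ on $D_\Omega$ is continuous there.

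Next, continuity on $(\Omega\times\Omega)\setminus D_\Omega$: $G^{\Omega}$ is jointly continuous there by \eqref{eq_G3bis}, $K^{\varepsilon}_{\beta}(|x-y|)$ is continuous in $(x,y)$ (it is a continuous function of $|x-y|$, itself continuous), $\delta(y)>0$ and continuous on $\Omega$, so the quotient is continuous away from the diagonal. Combined with the diagonal analysis of the previous step, $(x,y)\mapsto G^{\varepsilon}_{\beta}(x,y)/\delta(y)^{\gamma'}$ is continuous on all of $\Omega\times\Omega$. For part $(i)$, where $\gamma'<\gamma$ strictly, the remaining issue is behaviour as $y\to\partial\Omega$ (and as $x\to\partial\Omega$): from the bound above, $\delta(y)^{\gamma-\gamma'}\to 0$ since $\gamma-\gamma'>0$, so the quotient extends continuously by $0$ whenever $y\in\partial\Omega$; when only $x\to\partial\Omega$ with $y$ interior, one checks using \eqref{eq:Green0} (the third estimate $G^\Omega(x,y)\lesssim \delta(y)^\gamma|x-y|^{-(N-2s+\gamma)}$ already gives a bound independent of $\delta(x)$, and joint continuity of $G^\Omega$ extends to $x\in\partial\Omega$ with value $0$ there by the fourth estimate involving $\delta(x)^\gamma$) that the quotient extends continuously. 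Hence it extends to a continuous function on $\overline\Omega\times\overline\Omega$, which is compact, so it is uniformly continuous; restricting to $\Omega\times\Omega$ gives $(i)$.

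For part $(ii)$, the critical exponent $\gamma-\gamma'=0$ is borderline: the factor $\delta(y)^{\gamma-\gamma'}$ no longer vanishes, so I cannot just extend by $0$ as $y\to\partial\Omega$. This is where assumption \eqref{eq_G4} enters and is the main obstacle. The idea is that on $\{|x-y|>\varepsilon\}$ we have $G^{\varepsilon}_{\beta}(x,y)/\delta(y)^{\gamma}=G^{\Omega}(x,y)/\delta(y)^{\gamma}$, and \eqref{eq_G4} says precisely that this quotient has a continuous limit $M^\Omega(x_0,z_0)$ as $(x,y)\to(x_0,z_0)\in\Omega\times\partial\Omega$; meanwhile $K^{\varepsilon}_{\beta}(|x-y|)=1$ for all $(x,y)$ in a neighbourhood of any point with $|x_0-z_0|>\varepsilon$, so near such boundary points the quotient $G^{\varepsilon}_\beta/\delta(y)^\gamma$ agrees with $G^\Omega/\delta(y)^\gamma$ and inherits its continuous extension. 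For boundary points $(x_0,z_0)$ with $|x_0-z_0|\le\varepsilon$ — including $x_0=z_0\in\partial\Omega$, i.e. corners of $D_\Omega$ meeting $\partial\Omega$ — one falls back on the quantitative bound: there $G^{\varepsilon}_{\beta}(x,y)/\delta(y)^{\gamma}\lesssim \varepsilon^{-\beta}|x-y|^{\beta-(N-2s+\gamma)}$ or, if $|x-y|$ stays above some threshold, is controlled by \eqref{M-est}, $M^\Omega(x,z)\sim \delta(x)^\gamma|x-z|^{-(N-2s+2\gamma)}$, which is bounded; a short $\varepsilon$–$\delta$ argument splitting into $\{|x-y|\le\varepsilon\}$ and $\{|x-y|>\varepsilon\}$ then shows the quotient extends continuously (with value $0$ at corners of the diagonal on $\partial\Omega$, and with value $M^\Omega(x_0,z_0)$ or $0$ elsewhere on the boundary). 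Assembling: the quotient extends to a continuous function on the compact set $\overline\Omega\times\overline\Omega$, hence is uniformly continuous, proving $(ii)$. The one technical point requiring care is matching the two descriptions (the pointwise bound and the $M^\Omega$-limit) consistently on the overlap region where $|x-y|\approx\varepsilon$, which is handled by the $K^\varepsilon_\beta$ cutoff being identically $1$ there for $|x-y|>\varepsilon$ and by the uniform bound otherwise.
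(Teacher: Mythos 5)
Your part (i) is correct and follows essentially the paper's route: continuity of $G^{\varepsilon}_{\beta}$ away from and across the diagonal via \eqref{eq_G3bis} and the cutoff, then a pointwise bound from \eqref{eq:Green0} forcing the quotient to vanish at $\partial(\Omega\times\Omega)$, extension by $0$ to $\overline\Omega\times\overline\Omega$, and uniform continuity by compactness. (The paper uses the single bound $G^{\Omega}(x,y)\lesssim \delta(x)^{\gamma}\delta(y)^{\gamma}|x-y|^{-(N-2s+2\gamma)}$ together with the cutoff to get $G^{\varepsilon}_{\beta}(x,y)/\delta(y)^{\gamma'}\lesssim \delta(x)^{\gamma}\delta(y)^{\gamma-\gamma'}\varepsilon^{-(N+2\gamma-2s)}$, which kills all boundary limits at once, whereas you split cases; this is only a cosmetic difference.)

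In part (ii), however, there is a concrete gap. For a point $(x_0,z_0)\in\Omega\times\partial\Omega$ with $0<|x_0-z_0|<\varepsilon$, assumption \eqref{eq_G4} and the continuity of $K^{\varepsilon}_{\beta}$ give
\begin{equation*}
\lim_{(x,y)\to(x_0,z_0)}\frac{G^{\varepsilon}_{\beta}(x,y)}{\delta(y)^{\gamma}}
= M^{\Omega}(x_0,z_0)\,K^{\varepsilon}_{\beta}(|x_0-z_0|),
\end{equation*}
which is strictly between $0$ and $M^{\Omega}(x_0,z_0)$ since $M^{\Omega}(x_0,z_0)>0$ and $0<K^{\varepsilon}_{\beta}(|x_0-z_0|)<1$. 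So the extension you propose, taking the value ``$M^{\Omega}(x_0,z_0)$ or $0$'' on this part of the boundary, is not continuous there, and your fallback bound $G^{\varepsilon}_{\beta}(x,y)/\delta(y)^{\gamma}\lesssim \varepsilon^{-\beta}|x-y|^{\beta-(N-2s+\gamma)}$ does not force the limit to be $0$ (nor is it $0$); mere boundedness via \eqref{M-est} is not enough to produce a continuous extension, which is what your compactness argument needs. The repair is exactly what the paper does: define the extension on $\Omega\times\partial\Omega$ as $M^{\Omega}(x,z)K^{\varepsilon}_{\beta}(|x-z|)$ (for all $|x-z|$, not only $|x-z|\le\varepsilon$), note it is continuous on $\Omega\times\partial\Omega$ by \eqref{eq_G4}, and then check the compatibility with the value $0$ on $\partial\Omega\times\overline\Omega$ by the estimate $M^{\Omega}(x,z)K^{\varepsilon}_{\beta}(|x-z|)\lesssim \delta(x)^{\gamma}\varepsilon^{-(N+2\gamma-2s)}$, which uses \eqref{M-est} together with $\beta>N-2s+2\gamma$. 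With this corrected extension your argument closes and coincides with the paper's proof.
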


The proof of Lemma \ref{lem:technical} is given in Appendix~\ref{appendix:proof_of_lemma}.

\begin{proof}[{\sc Proof of Theorem \ref{theo:compactness}}] \text{}

(i) \textbf{Step 1.} Let $\varepsilon > 0$ and $G^{\varepsilon}_{\beta}$, $K^{\varepsilon}_{\beta}$ defined in Lemma \ref{lem:technical}. For simplicity, we write
$G^{\varepsilon}$ for $G^{\varepsilon}_{\beta}$, $K^\varepsilon$ for $K_\beta^\varepsilon$ and define $H^{\varepsilon}:= G^{\Omega} - G^{\varepsilon}$.
For every $0 \le \gamma' < \gamma$,  by \eqref{G-est} and the fact that $a^{\gamma'} \wedge 1 \le a^{\gamma}$
\begin{equation}\label{eq:Green0}
G^{\Omega}(x,y) \lesssim \left( \dfrac{\delta(y)^{\gamma}}{\delta(x)^{\gamma}} \wedge 1 \right)\dfrac{1}{|x-y|^{N-2s}}  \lesssim  \dfrac{\delta(y)^{\gamma'}}{\delta(x)^{\gamma'}} \cdot \dfrac{1}{|x-y|^{N-2s}},
\end{equation}
which implies
\begin{equation}\label{eq:hcompactL1}
0 \le H^{\varepsilon}(x,y) = G^{\Omega}(x,y)(1- K^{\varepsilon}(x,y)) \lesssim \dfrac{\delta(y)^{\gamma'}}{\delta(x)^{\gamma'}} b(|x-y|), \quad x,y \in \Omega,
\end{equation}
where 
$$
b(t) := \dfrac{1}{t^{N-2s}} \left(1 - \dfrac{t^\beta}{\varepsilon^{\beta}}\right)\1_{\{ 0 \le t \le \varepsilon \} }, \quad t \geq 0.
$$
Let $K \subset \subset \Omega$ be fixed and $\mu \in \M(\Omega,\delta^{\gamma'})$.  
For $h \in \R^N$ such that $|h| < \frac{1}{2}\dist(K,\partial \Omega)$, one has
\begin{equation}\label{eq_maintheo2}
\begin{aligned}
\norm{ \Gbb^{\Omega}[\mu](\cdot + h) - \Gbb^{\Omega}[\mu](\cdot) }_{L^1(K)} &= \norm{ \int_{\Omega} \left[G^{\Omega}(\cdot+h,y) - G^{\Omega}(\cdot,y)\right]d\mu(y) }_{L^1(K)} \\ &\le \norm{ \int_{\Omega} H^{\varepsilon} (\cdot,y)d\mu(y) }_{L^1(K)} + \norm{ \int_{\Omega} H^{\varepsilon} (\cdot+h,y)d\mu(y) }_{L^1(K)} \\ &+ \norm{ \int_{\Omega} \left[G^{\varepsilon} (\cdot+h,y) - G^{\varepsilon}(\cdot,y) \right]d\mu(y)}_{L^1(K)} \\
&=: J_1 + J_2 + J_3.
\end{aligned}
\end{equation}

We first estimate $J_1$.  Using  the inequality  $\delta(x) \geq \dist(K,\partial \Omega),\forall x \in K$ one has
$$\begin{aligned}
J_1 = \int_{K} \int_{\Omega} H^{\varepsilon} (x,y) d\mu(y) \, dx &\lesssim \int_{K}\int_{\Omega} \dfrac{\delta(y)^{\gamma'}}{\delta(x)^{\gamma'}} b(|x-y|) d\mu(y) \, dx \\
& \lesssim \int_{\Omega}\int_{K} \dfrac{\delta(y)^{\gamma'}}{\delta(x)^{\gamma'}} b(|x-y|) dx \, d\mu(y)\\
& \leq \dfrac{1}{\dist(K,\partial \Omega)^{\gamma'}} \int_{\Omega}\delta(y)^{\gamma'} \left(\int_{K}  b(|x-y|) dx\right)\, d\mu(y).
\end{aligned}
$$
Since 
$$\int_{K} b(|x-y|) dx = \int_{ \{|x- y| \le \varepsilon\} } b(|x-y|)dx  \lesssim \int_0^{\varepsilon} b(t)t^{N-1} dt \lesssim \int_0^{\varepsilon} t^{2s-1} dt \lesssim \varepsilon^{2s},$$
we obtain
\begin{equation}\label{eq_I12}
\begin{aligned}
J_1 \lesssim  \dfrac{1}{\dist(K,\partial \Omega)^{\gamma'}}\varepsilon^{2s} \int_{\Omega} \delta(y)^{\gamma'}  d\mu(y) = \dfrac{1}{\dist(K,\partial \Omega)^{\gamma'}}\varepsilon^{2s} \norm{\mu}_{\M(\Omega,\delta^{\gamma'})}.
\end{aligned}
\end{equation}

Next we estimate $J_2$. We have
\begin{align*}
J_2 = \int_{K} \int_{\Omega} H^{\varepsilon} (x+h,y) d\mu(y) dx 
&\lesssim \int_{K}\int_{\Omega} \dfrac{\delta(y)^{\gamma'}}{\delta(x+h)^{\gamma'}} b(|x+h-y|) d\mu(y) dx \\
&\lesssim \int_{\Omega} \delta(y)^{\gamma'} \left( \int_K \dfrac{b(|x+h-y|)}{\delta(x+h)^{\gamma'}}   dx \right) d\mu(y).
\end{align*}
Notice that for $|h| < \frac{1}{2}\dist(K,\partial \Omega)$,
 $$\delta(x+h)^{\gamma'} \geq \delta(x)^{\gamma'} - |h|^{\gamma'} \geq C_{\gamma'} \dist(K,\partial \Omega)^{\gamma'},\quad \forall x \in K,$$
which implies
\begin{align*}
\int_{K} \dfrac{b(|x+h-y| )}{\delta(x+h)^{\gamma'}} dx 
& \lesssim \frac{1}{C_{\gamma'}\dist(K,\partial \Omega)^{\gamma'}} \int_{K} b(|x+h-y|) dx \\
& \lesssim \frac{1}{C_{\gamma'}\dist(K,\partial \Omega)^{\gamma'}} \int_{\{|x+h-y| \le \varepsilon \}} b(|x+h-y|) dx\\ 
& \lesssim \frac{1}{C_{\gamma'}\dist(K,\partial \Omega)^{\gamma'}} \int_0^{\varepsilon} b(t)t^{N-1} dt \lesssim \varepsilon^{2s}.
\end{align*}
From this we derive that
\begin{equation}\label{eq_I22}
\begin{aligned}
J_2 \lesssim  \frac{1}{C_{\gamma'}\dist(K,\partial \Omega)^{\gamma'}}\varepsilon^{2s} \int_{\Omega} \delta(y)^{\gamma'}  d\mu(y) = \frac{1}{C_{\gamma'}\dist(K,\partial \Omega)^{\gamma'}}\varepsilon^{2s} \norm{\mu}_{\M(\Omega,\delta^{\gamma'})}.
\end{aligned}
\end{equation}

Finally,  for $J_3$ we write
\begin{equation}\label{eq_I23}
\begin{aligned}
J_3 &= \int_{K} \left|\int_{\Omega} \left[G^{\varepsilon} (x+h,y) - G^{\varepsilon}(x,y) \right]d\mu(y)\right|dx\\
& \le \int_{\Omega} \int_{K} |G^{\varepsilon}(x+h,y) - G^{\varepsilon}(x,y)| dx d\mu(y)\\
&\le \int_{\Omega} \delta(y)^{\gamma'} \int_{K} \left|\dfrac{G^{\varepsilon}(x+h,y) - G^{\varepsilon}(x,y)}{\delta(y)^{\gamma'}}\right| dx d\mu(y)\\
&\le |K| \norm{ \dfrac{G^{\varepsilon}(\cdot + h, \cdot)}{\delta(\cdot)^{\gamma'}} - \dfrac{G^{\varepsilon}(\cdot, \cdot)}{\delta(\cdot)^{\gamma'}}}_{L^{\infty}(\Omega \times \Omega)} \norm{\mu}_{\M(\Omega,\delta^{\gamma'})}.
\end{aligned}
\end{equation}
Since the map $(x,y) \mapsto \frac{G^{\varepsilon}(x,y)}{\delta(y)^{\gamma'}}$ is uniformly continuous by Lemma \ref{lem:technical}(i),  we deduce that
\begin{equation}\label{eq_I23.2}
\lim_{|h| \to 0} \sup_{ \norm{\mu}_{\M(\Omega,\delta^{\gamma'})} \le 1}   J_3 = 0.
\end{equation}
Combining \eqref{eq_maintheo2}--\eqref{eq_I23.2},  we have
$$\limsup_{|h| \to 0}\sup_{ \norm{\mu}_{\M(\Omega,\delta^{\gamma'})} \le 1} \norm{ \Gbb^{\Omega}[\mu](\cdot+h) - \Gbb^{\Omega}[\mu](\cdot)}_{L^1(K)} \le C(K)\varepsilon^{2s}.$$
By Riesz--Fr\'echet--Kolmogorov theorem  (see e.g. \cite[Proposition 1.2.23]{DraMil_2013}) we conclude that $\Gbb^{\Omega}: \M(\Omega,\delta^{\gamma'}) \to L^1(K)$ is compact for every compact set $K \subset \subset \Omega$.

\textbf{Step 2.} We prove that  $\Gbb^{\Omega}: \M(\Omega,\delta^{\gamma'}) \to L^q(\Omega,\delta^{\alpha})$ is compact.  Consider a bounded sequence $\{ \mu_n\}_{n \in \mathbb{N}} \subset \M(\Omega,\delta^{\gamma'})$ and a  $C^2$ exhaustion $\{ \Omega_k \}_{k \in \mathbb{N}}$ of $\Omega$, namely $\Omega_k \subset \subset \Omega_{k+1} \subset \subset \Omega$ for all $k \in \mathbb{N}$ and $\cup_{k=1}^{\infty} \Omega_k = \Omega$. Put $u_n:= \Gbb^{\Omega}[\mu_n]$.  By Step 1, for any $k \in \N$, the map $\Gbb^{\Omega} : \M(\Omega,\delta^{\gamma'}) \to L^1(\Omega_k)$ is compact. Thus, there exists a subsequence, denoted by $\{u_{n,k}\}_{n \in \mathbb{N}}$, and a function $v_k$ defined in $\Omega_k$ such that $u_{n,k} \to v_k$ a.e. in $\Omega_k$ as $n \to \infty$. Using the standard diagonal argument, we derive that  there exist a subsequence, still labeled by the same notation $\{u_n\}_{n \in \mathbb{N}}$, and  a function $u$ such that $u_n \to u$ a.e. in $\Omega$ and $u = v_k$ in $\Omega_k$. On the other hand, by \eqref{eq:Marcin2}, for any $q \in [1, p^*_{\gamma',\alpha} )$, we have
$$
\norm{u_n}_{L^q(\Omega,\delta^{\alpha})} \lesssim \norm{\mu_n}_{\M(\Omega,\delta^{\gamma'})} \lesssim 1.
$$
Employing Vitali's convergence theorem, we derive that, up to a subsequence, $u_n \to u$ in $L^q(\Omega,\delta^{\alpha})$ for any $q \in [1, p^*_{\gamma',\alpha})$.  Hence, we conclude that the map $\Gbb^{\Omega}: \M(\Omega,\delta^{\gamma'}) \to L^q(\Omega,\delta^{\alpha})$ is compact for any $q \in [1,p^*_{{\gamma'},\alpha})$.

(ii) Let $\mu \in \M(\Omega,\delta^{\gamma})$ and consider the decomposition as in \eqref{eq_maintheo2} for $\Gbb^{\Omega}[\mu]$.  Following the same argument as above,  we have the same estimates for $J_1, J_2$ where $\gamma'$ is replaced by $\gamma$.  Furthermore,  the estimate for $J_3$ also holds under assumption \eqref{eq_G4} by Lemma \ref{lem:technical}(ii). Hence,  the map $\Gbb^{\Omega}: \M(\Omega,\delta^{\gamma}) \to L^1(K)$ is compact for any subset $K \subset \subset \Omega$.  Finally, following Step 2 in the proof of (i),  we conclude that the map $\Gbb^{\Omega}: \M(\Omega,\delta^{\gamma}) \to L^q(\Omega,\delta^{\alpha})$ is compact for every $q \in [1, {p^*_{\gamma,\alpha})}$ (recall that $p^*_{\gamma,\alpha}=\frac{N+\alpha}{N+\gamma-2s}$).
\end{proof}

In particular, we infer from Theorem \ref{theo:compactness} that the map $\Gbb^{\Omega}: \M(\Omega,\delta^{\gamma}) \to L^1(\Omega,\delta^{\gamma})$ is compact under the given assumptions.  In addition,  as a consequence of Theorem \ref{theo:compactness}(ii), we obtain the following convergence which can be interpreted as the stability of solutions to linear problems.

\begin{corollary} \label{cor:stability} Suppose that \eqref{eq_G2}--\eqref{eq_G4} hold. Assume $\{ \mu_n\}_{n \in \N} \subset \M(\Omega,\delta^{\gamma})$ converges weakly to $\mu$ in $\M(\Omega,\delta^{\gamma})$.  Then  $\Gbb^{\Omega}[\mu_n] \to \Gbb^{\Omega}[\mu]$ in $L^1(\Omega,\delta^{\gamma})$.
\end{corollary}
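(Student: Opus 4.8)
The plan is to combine the compactness of $\Gbb^{\Omega}$ from Theorem~\ref{theo:compactness}(ii) with a standard subsequence argument, identifying the cluster points by duality via Fubini's theorem. First I would observe that a weakly convergent sequence is bounded: by Banach--Steinhaus, $\sup_n\|\mu_n\|_{\M(\Omega,\delta^{\gamma})}<\infty$. By Theorem~\ref{theo:compactness}(ii) applied with $\gamma'=\gamma$ and $\alpha=\gamma$ (so that $q=1<p^*$ is admissible), the operator $\Gbb^{\Omega}\colon\M(\Omega,\delta^{\gamma})\to L^{1}(\Omega,\delta^{\gamma})$ is compact, hence $\{\Gbb^{\Omega}[\mu_n]\}_{n}$ is relatively compact in $L^{1}(\Omega,\delta^{\gamma})$. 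It therefore suffices to prove that every $L^{1}(\Omega,\delta^{\gamma})$-limit of a subsequence of $\{\Gbb^{\Omega}[\mu_n]\}$ coincides with $\Gbb^{\Omega}[\mu]$; the usual subsequence criterion then upgrades this to convergence of the whole sequence.

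So suppose $\Gbb^{\Omega}[\mu_{n_k}]\to v$ in $L^{1}(\Omega,\delta^{\gamma})$ for some $v\in L^{1}(\Omega,\delta^{\gamma})$, and test against an arbitrary $\phi\in C_c(\Omega)$. On the one hand, boundedness of $\phi$ gives $\int_{\Omega}\Gbb^{\Omega}[\mu_{n_k}]\,\phi\,\delta^{\gamma}\,dx\to\int_{\Omega}v\,\phi\,\delta^{\gamma}\,dx$. On the other hand, by Tonelli's theorem (recall $G^{\Omega}\ge 0$) and the symmetry of $G^{\Omega}$,
$$\int_{\Omega}\Gbb^{\Omega}[\mu_{n_k}]\,\phi\,\delta^{\gamma}\,dx=\int_{\Omega}\Gbb^{\Omega}[\phi\,\delta^{\gamma}](y)\,d\mu_{n_k}(y),$$
and likewise with $\mu$ in place of $\mu_{n_k}$; the interchange is legitimate because $\Gbb^{\Omega}[|\phi|\delta^{\gamma}]/\delta^{\gamma}$ is bounded, as explained next. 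The crucial point is that $\Gbb^{\Omega}[\phi\,\delta^{\gamma}]/\delta^{\gamma}$ extends to a function of $C(\overline{\Omega})$, so that, by definition of weak convergence in $\M(\Omega,\delta^{\gamma})$, the right-hand side above converges to $\int_{\Omega}\Gbb^{\Omega}[\phi\,\delta^{\gamma}]\,d\mu=\int_{\Omega}\Gbb^{\Omega}[\mu]\,\phi\,\delta^{\gamma}\,dx$. Indeed, since $\mathrm{supp}\,\phi\subset\subset\Omega$, for $y$ in the interior the continuity of $y\mapsto\Gbb^{\Omega}[\phi\,\delta^{\gamma}](y)$ follows from \eqref{eq_G3bis} and the local integrability of the singularity $|x-y|^{2s-N}$ permitted by \eqref{eq:Green0}; and for $y\to z\in\partial\Omega$ one gets, using \eqref{eq_G4} together with the bound $G^{\Omega}(x,y)/\delta(y)^{\gamma}\lesssim\delta(x)^{\gamma}|x-y|^{2s-N-2\gamma}$ from \eqref{eq:Green0} (uniform for $x\in\mathrm{supp}\,\phi$), that $\Gbb^{\Omega}[\phi\,\delta^{\gamma}](y)/\delta(y)^{\gamma}\to\int_{\Omega}M^{\Omega}(x,z)\phi(x)\delta(x)^{\gamma}\,dx$, which is continuous in $z$ by the continuity of $M^{\Omega}$. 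Comparing the two limits yields $\int_{\Omega}v\,\phi\,\delta^{\gamma}\,dx=\int_{\Omega}\Gbb^{\Omega}[\mu]\,\phi\,\delta^{\gamma}\,dx$ for every $\phi\in C_c(\Omega)$, and since $\delta^{\gamma}>0$ a.e. in $\Omega$ this forces $v=\Gbb^{\Omega}[\mu]$ a.e.

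I expect the main obstacle to be exactly the verification that $\Gbb^{\Omega}[\phi\,\delta^{\gamma}]/\delta^{\gamma}$ extends continuously up to $\partial\Omega$: this is where the continuity assumptions \eqref{eq_G3bis}--\eqref{eq_G4} and the two-sided estimate \eqref{G-est} are genuinely used, and it is what makes the duality pairing against the weakly convergent sequence $\{\mu_n\}$ meaningful. Everything else — boundedness from Banach--Steinhaus, relative compactness from Theorem~\ref{theo:compactness}(ii), and passing from subsequential limits to convergence of the full sequence — is routine.
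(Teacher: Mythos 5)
Your proposal is correct and follows essentially the same route as the paper's proof: boundedness of the weakly convergent sequence, compactness of $\Gbb^{\Omega}:\M(\Omega,\delta^{\gamma})\to L^{1}(\Omega,\delta^{\gamma})$ from Theorem \ref{theo:compactness}(ii), a subsequence argument, and identification of the subsequential limit by duality. The only (minor) difference is in the identification step: the paper invokes the integration-by-parts formula of \cite[Lemma 4.4]{TruongTai_2020} with test functions $\xi\in\delta^{\gamma}L^{\infty}(\Omega)$ and passes to the limit directly, whereas you re-derive the pairing via Fubini for $\phi\,\delta^{\gamma}$ with $\phi\in C_c(\Omega)$ and justify the limit passage by showing $\Gbb^{\Omega}[\phi\,\delta^{\gamma}]/\delta^{\gamma}$ extends to $C(\overline{\Omega})$ using \eqref{eq_G3bis}--\eqref{eq_G4}, which is a self-contained and, if anything, more explicit account of why the weak convergence in $\M(\Omega,\delta^{\gamma})$ may be applied.
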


\begin{proof} Let $\{ \nu_n\}_{n \in \mathbb{N}}$ be an arbitrary subsequence of $\{\mu_n\}_{n \in \mathbb{N}}$. Put $u_{\nu_n} := \Gbb^{\Omega}[\nu_n], n \in \N$, and $u_\mu := \Gbb^{\Omega}[\mu]$. By the compactness of the map $\Gbb^{\Omega}: \M(\Omega,\delta^{\gamma}) \to L^1(\Omega,\delta^{\gamma})$ in Theorem \ref{theo:compactness} (ii), up to a subsequence, there exists a function $v \in L^1(\Omega,\delta^{\gamma})$ such that $u_{\nu_n} \to v$ in $L^1(\Omega,\delta^{\gamma})$. By the integration-by-parts formula \cite[Lemma 4.4]{TruongTai_2020}, we have 
$$\int_{\Omega} u_{\nu_n} \xi dx = \int_{\Omega} \Gbb^{\Omega} [\xi] d\nu_n,\quad \forall \xi \in \delta^{\gamma} L^{\infty}(\Omega).$$
Letting $n \to \infty$ and noticing  that $\{\nu_n\}_{n \in \mathbb{N}}$ converges weakly to $\mu$, we have
$$\int_{\Omega} v \xi dx = \int_{\Omega} \Gbb^{\Omega} [\xi] d\mu,\quad \forall \xi \in \delta^{\gamma} L^{\infty}(\Omega),$$
which gives $v = \Gbb^{\Omega}[\mu] = u_\mu$.  Hence, $u_{\nu_n} \to u_\mu$ in $L^1(\Omega,\delta^{\gamma})$. Since $\{ \nu_n\}_{n \in \mathbb{N}}$ is chosen arbitrarily, we conclude that the whole sequence $\{\Gbb^{\Omega}[\mu_n]\}_{n \in \mathbb{N}}$ converges to $\Gbb^{\Omega}[\mu]$ in $L^1(\Omega,\delta^{\gamma})$.
\end{proof}

To end this section,  we prove the compactness of the Green operator in $L^r(\Omega,\delta^{\gamma})$ for $r > \frac{N+\gamma}{2s}$ for the sake of completeness.
\begin{proposition} \label{compact-large}
Assume that \eqref{eq_G2}--\eqref{eq_G3bis} hold.  Then the map $\Gbb^{\Omega}: L^r(\Omega,\delta^{\gamma}) \to C(\overline{\Omega})$ is compact {\color{darkblue}for any $r > \frac{N+\gamma}{2s}$}.
\end{proposition}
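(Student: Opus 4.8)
The plan is to show first that $\Gbb^{\Omega}$ maps $L^r(\Omega,\delta^{\gamma})$ continuously into $C(\overline\Omega)$ and then upgrade continuity to compactness by the Arzelà--Ascoli theorem. For the continuity, fix $r>\frac{N+\gamma}{2s}$ and let $r'$ be its conjugate exponent. For $x\in\Omega$ and $f\in L^r(\Omega,\delta^{\gamma})$, Hölder's inequality with the weight $\delta^\gamma$ gives
\begin{equation*}
|\Gbb^{\Omega}[f](x)| \le \left(\int_{\Omega} \frac{G^{\Omega}(x,y)^{r'}}{\delta(y)^{\gamma(r'-1)}}\,dy\right)^{1/r'}\norm{f}_{L^r(\Omega,\delta^{\gamma})}.
\end{equation*}
Using the bound $G^{\Omega}(x,y)\lesssim \delta(y)^{\gamma}|x-y|^{-(N-2s+\gamma)}$ from \eqref{eq:Green0}, the inner integral is controlled by $\int_{\Omega}|x-y|^{-(N-2s+\gamma)r'}\,dy$, which is finite precisely when $(N-2s+\gamma)r'<N$, i.e. $r>\frac{N+\gamma}{2s}$; moreover this integral is bounded uniformly in $x\in\Omega$ (the domain is bounded). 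Hence $\norm{\Gbb^{\Omega}[f]}_{L^\infty(\Omega)}\lesssim\norm{f}_{L^r(\Omega,\delta^\gamma)}$, and one must also check $\Gbb^{\Omega}[f]\in C(\overline\Omega)$, including the vanishing at $\partial\Omega$: this follows from the sharper bound $G^{\Omega}(x,y)\lesssim \delta(x)^{\gamma}\delta(y)^{\gamma}|x-y|^{-(N-2s+2\gamma)}$ in \eqref{eq:Green0}, which yields $|\Gbb^{\Omega}[f](x)|\lesssim\delta(x)^{\gamma}\norm{f}_{L^r(\Omega,\delta^\gamma)}\to0$ as $x\to\partial\Omega$, while continuity in the interior follows from dominated convergence using joint continuity \eqref{eq_G3bis} of $G^{\Omega}$ together with the integrable majorant just produced.

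For compactness, let $\{f_n\}$ be bounded in $L^r(\Omega,\delta^\gamma)$ and set $u_n=\Gbb^{\Omega}[f_n]$. By the above, $\{u_n\}$ is uniformly bounded in $C(\overline\Omega)$. To get equicontinuity I would estimate, for $x_1,x_2\in\overline\Omega$,
\begin{equation*}
|u_n(x_1)-u_n(x_2)| \le \left(\int_{\Omega}\frac{|G^{\Omega}(x_1,y)-G^{\Omega}(x_2,y)|^{r'}}{\delta(y)^{\gamma(r'-1)}}\,dy\right)^{1/r'}\norm{f_n}_{L^r(\Omega,\delta^\gamma)},
\end{equation*}
and show the bracketed quantity tends to $0$ as $|x_1-x_2|\to0$, uniformly in $x_1,x_2$. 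Away from the diagonal one splits the integral over $\{|y-x_1|>2|x_1-x_2|\}$, where joint continuity plus the pointwise bounds give a small contribution by dominated convergence, and over the complementary small ball, where the singularity estimate $G^{\Omega}(x_i,y)\lesssim|x_i-y|^{-(N-2s)}$ shows the contribution is $O(|x_1-x_2|^{2s-(N+\gamma)/r})$, which vanishes since $r>\frac{N+\gamma}{2s}$. This yields a common modulus of continuity for all $u_n$, so by Arzelà--Ascoli a subsequence converges in $C(\overline\Omega)$, proving compactness.

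The main obstacle is making the equicontinuity estimate genuinely \emph{uniform up to the boundary}: near $\partial\Omega$ the weight $\delta(y)^{-\gamma(r'-1)}$ is singular, and one cannot simply use the interior estimate $G^{\Omega}(x,y)\lesssim|x-y|^{-(N-2s)}$ because it loses the decay that makes the weighted integral converge. The resolution is to work throughout with the full four-way bound \eqref{eq:Green0}, in particular with $G^{\Omega}(x,y)\lesssim\delta(y)^{\gamma}|x-y|^{-(N-2s+\gamma)}$, so that the combination $G^{\Omega}(x,y)^{r'}\delta(y)^{-\gamma(r'-1)}\lesssim\delta(y)^{\gamma}|x-y|^{-(N-2s+\gamma)r'}$ is integrable in $y$ with the extra factor $\delta(y)^\gamma$ compensating; a Lipschitz-type bound $|G^{\Omega}(x_1,y)-G^{\Omega}(x_2,y)|\lesssim\delta(y)^{\gamma}|x_1-x_2|^{\theta}|x_i-y|^{-(N-2s+\gamma)-\theta}$ for some $\theta\in(0,1)$ — obtainable from interior regularity of $G^{\Omega}$ in its first variable on the set $|x_i-y|\gtrsim|x_1-x_2|$, or alternatively handled purely by the dominated-convergence splitting above if such a quantitative modulus is not available — then closes the argument. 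Since only joint continuity \eqref{eq_G3bis} and the two-sided bounds \eqref{eq_G2} are assumed, I would favor the dominated-convergence version, which needs no extra regularity of $G^{\Omega}$.
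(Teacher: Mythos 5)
Your overall architecture (weighted H\"older bound, continuity up to the boundary, Arzel\`a--Ascoli) is essentially the paper's, but the key integrability step is wrong, and the error propagates to every place where you estimate the weighted kernel integral. After H\"older you must control $\int_\Omega G^{\Omega}(x,y)^{r'}\delta(y)^{-\gamma(r'-1)}dy$ uniformly in $x$. You bound $G^{\Omega}(x,y)\lesssim\delta(y)^{\gamma}|x-y|^{-(N-2s+\gamma)}$ and claim that $\int_\Omega|x-y|^{-(N-2s+\gamma)r'}dy<\infty$ ``precisely when $(N-2s+\gamma)r'<N$, i.e. $r>\frac{N+\gamma}{2s}$''. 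This equivalence is false: $(N-2s+\gamma)r'<N$ amounts to $r(2s-\gamma)>N$, which is strictly stronger than $r>\frac{N+\gamma}{2s}$ and is vacuous whenever $\gamma\ge 2s$ (e.g. the SFL with $s<\frac12$, $\gamma=1$). Your proposed repair --- keeping the leftover factor $\delta(y)^{\gamma}$ so that the integrand is $\delta(y)^{\gamma}|x-y|^{-(N-2s+\gamma)r'}$ --- does not help, because the divergence occurs at the \emph{interior} diagonal: for $x$ with $\delta(x)$ bounded below, $\delta(y)^{\gamma}$ is of order one near $y=x$ and the integral still requires $(N-2s+\gamma)r'<N$. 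The same defect affects the small-ball part of your equicontinuity estimate and your boundary-vanishing bound $|\Gbb^{\Omega}[f](x)|\lesssim\delta(x)^{\gamma}\norm{f}_{L^r(\Omega,\delta^{\gamma})}$: there you use either the unweighted branch $G^{\Omega}(x,y)\lesssim|x-y|^{-(N-2s)}$, which leaves $\delta(y)^{-\gamma(r'-1)}$ uncontrolled near $\partial\Omega$ (as you yourself note), or a full-$\gamma$ branch, which is not integrable at the diagonal in the stated range of $r$.

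The missing idea is an interpolated use of \eqref{G-est}: since $\bigl(\tfrac{\delta(y)}{|x-y|}\wedge1\bigr)^{\gamma}\le\bigl(\tfrac{\delta(y)}{|x-y|}\wedge1\bigr)^{\gamma/r}$, one has $G^{\Omega}(x,y)\lesssim\delta(y)^{\gamma/r}|x-y|^{-(N-2s+\gamma/r)}$, hence
\begin{equation*}
\frac{G^{\Omega}(x,y)^{r'}}{\delta(y)^{\gamma(r'-1)}}\lesssim|x-y|^{-(N-2s)r'-\gamma(r'-1)},
\end{equation*}
which is integrable in $y$, uniformly in $x$, exactly when $(N-2s)r'+\gamma(r'-1)<N$, i.e. $r'<p^*$, i.e. $r>\frac{N+\gamma}{2s}$; this is precisely how the paper shows that $\{G^{\Omega}(x,\cdot)\delta(\cdot)^{-\gamma}\}_{x\in\Omega}$ is bounded in $L^{r'}(\Omega,\delta^{\gamma})$, and it also yields your claimed small-ball rate $|x_1-x_2|^{2s-(N+\gamma)/r}$. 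With this bound in hand, the paper obtains continuity and equicontinuity without any quantitative modulus for $G^{\Omega}$: by \eqref{eq_G3bis} (and \eqref{G-est} when $\tilde x\in\partial\Omega$) one has $G^{\Omega}(x_n,\cdot)\delta(\cdot)^{-\gamma}\to G^{\Omega}(\tilde x,\cdot)\delta(\cdot)^{-\gamma}$ a.e.\ (with limit $0$ at the boundary), Vitali upgrades this to convergence in $L^{r'}(\Omega,\delta^{\gamma})$, and H\"older concludes; the uniform $L^{\infty}$ bound is quoted from \cite{TruongTai_2020}. Your dominated-convergence variant can be made to work, but only after the exponent bookkeeping is corrected along these lines.
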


\begin{proof}

\noindent \textbf{Step 1.} We first claim that the map $\Gbb^{\Omega}\colon L^r(\Omega,\delta^{\gamma}) \to C(\overline{\Omega})$ is continuous.  Firstly,  under  assumption \eqref{eq_G2}, the map $\Gbb^{\Omega}: L^r(\Omega,\delta^{\gamma}) \to L^{\infty}(\Omega)$ is continuous by \cite[Proposition 4.11]{TruongTai_2020}. Hence, it is sufficient to show that $\Gbb^{\Omega}[f]$ is continuous on $\overline{\Omega}$ for any $f \in L^r(\Omega,\delta^\gamma)$. Indeed, let  $\{x_n\}_{n \in \mathbb{N}} \subset \Omega$ be a sequence converging to a point $\tilde x\in \overline \Omega$. 

First we consider the case that $\tilde x \in \Omega$. By assumption \eqref{eq_G3bis},  it can be seen that
\begin{equation}\label{eq:Greenpointwiselim}
\lim_{n \to \infty}\frac{G^{\Omega}(x_n,y)}{\delta(y)^{\gamma}} = \frac{G^{\Omega}(\tilde x,y)}{\delta(y)^{\gamma}} \quad \text{ for a.e. } y \in \Omega.
\end{equation}
	
We show that for every $q \in [1,p^*)$, the sequence $\left\{ G^{\Omega}(x_n ,\cdot)\delta(\cdot)^{-\gamma}\right\}_{n \in \mathbb{N}}$ is bounded in $L^q(\Omega,\delta^{\gamma})$.  For $q \in [1,p^*)$, by \eqref{G-est} there holds
	$$\dfrac{G^{\Omega}(x_n,y)}{\delta(y)^{\gamma(1-\frac{1}{q})}} \lesssim \dfrac{1}{|x_n-y|^{N-2s+\gamma(1-\frac{1}{q})}}, \quad  x_n \neq y.$$
	Therefore, taking into account that $q < p^* = \frac{N+\gamma}{N+\gamma-2s}$, we deduce
	\begin{align*}
		\int_{\Omega} \left( \dfrac{G^{\Omega}(x_n,y)}{\delta(y)^{\gamma}}\right)^q \delta(y)^\gamma dy = \int_{\Omega} \left(\dfrac{G^{\Omega}(x_n,y)}{\delta(y)^{\gamma(1-\frac{1}{q})}}\right)^q dy \lesssim  \int_{\Omega} \dfrac{1}{|x_n-y|^{q(N+\gamma-2s)-\gamma}}dy \leq C(N,s,\gamma,q).
	\end{align*} 
	Thus,  $\left\{ G^{\Omega}(x_n ,\cdot)\delta(\cdot)^{-\gamma}\right\}_{n \in \mathbb{N}}$ is bounded $L^q(\Omega,\delta^{\gamma})$ for every $q \in [1,p^*)$.  Since $r'=\frac{r}{r-1}<p^*$,  we conclude by Vitali's convergence theorem that $G^{\Omega}(x_n,\cdot)\delta(\cdot)^{-\gamma} \to G^{\Omega}(\tilde x,\cdot)\delta(\cdot)^{-\gamma}$ in $L^{r'}(\Omega,\delta^{\gamma})$, namely 
	\begin{equation} \label{eq:compactLinfty.2}
		\lim_{ n \to \infty} \norm{G^{\Omega}(x_n,\cdot)\delta(\cdot)^{-\gamma} - G^{\Omega}(\tilde x,\cdot)\delta(\cdot)^{-\gamma}}_{L^{r'}(\Omega,\delta^{\gamma})} = 0.
	\end{equation}
Let $f \in L^r(\Omega,\delta^{\gamma})$. We have
	\begin{equation}\label{eq:compactLinfty.1}
		\begin{aligned}
			\left|\Gbb^{\Omega}[f](x_n) - \Gbb^{\Omega}[f](\tilde x)\right| &= \left|\int_{\Omega} \left[G^{\Omega}(x_n,y)-G^{\Omega}(\tilde x,y)\right] f(y)dy\right| \\
			&\le \left(\int_{\Omega} f(y)^r \delta(y)^{\gamma} dy\right)^{\frac{1}{r}}\left( \int_{\Omega} \dfrac{|G^{\Omega}(x_n,y)-G^{\Omega}(\tilde x ,y)|^{r'}}{\delta(y)^{\frac{\gamma r'}{r}}}dy\right)^{\frac{1}{r'}}\\
			& = \norm{f}_{L^r(\Omega,\delta^{\gamma})}\norm{G^{\Omega}(x_n,\cdot)\delta(\cdot)^{-\gamma} - G^{\Omega}(\tilde x,\cdot)\delta(\cdot)^{-\gamma}}_{L^{r'}(\Omega,\delta^{\gamma})}.
		\end{aligned}
	\end{equation}
 Using \eqref{eq:compactLinfty.2}, we derive that $\Gbb^{\Omega}[f](x_n) \to \Gbb^{\Omega}[f](\tilde x)$ as $n \to \infty$. Thus $\Gbb^{\Omega}[f]$ is continuous in $\Omega$.
 
 If $\tilde x \in \partial \Omega$ then by using \eqref{G-est}, we deduce that
 $$ \lim_{n \to \infty}\frac{G^{\Omega}(x_n,y)}{\delta(y)^\gamma} = 0 \quad \text{for a.e. } y \in \Omega.
 $$
 By using a similar argument as above, we deduce that for any $f \in L^r(\Omega,\delta^\gamma)$, $\Gbb^\Omega[f](x_n) \to 0$ as $n \to \infty$. Thus, by putting $\Gbb^\Omega[f](x)=0$ for $x \in \partial \Omega$, we obtain that $\Gbb^{\Omega}[f] \in C(\overline \Omega)$.

\textbf{Step 2.} Consider a bounded set $\mathcal{Q} \subset L^r(\Omega,\delta^{\gamma})$ and put $M_{\mathcal{Q}}:=\sup_{f \in \mathcal{Q}}\norm{f}_{L^r(\Omega,\delta^{\gamma})}<+\infty$.  By Step 1 and \cite[Proposition 4.11]{TruongTai_2020},  one has $\Gbb^{\Omega}(\mathcal{Q}) \subset C(\overline{\Omega})$ and 
$$\norm{\Gbb^{\Omega}[f]}_{L^{\infty}(\Omega)} \lesssim \norm{f}_{L^r(\Omega,\delta^{\gamma})} \leq M_\mathcal{Q},\quad \forall f \in \mathcal{Q}.$$
Next let $\varepsilon>0$ and take arbitrary $\tilde x \in \overline \Omega$ and $f \in \mathcal{Q}$. By using the same argument leading to \eqref{eq:compactLinfty.1}, we deduce that there exists $\rho$ depending on $\tilde x, \varepsilon, M_{\mathcal{Q}},\Omega$ such that for any $x \in B(\tilde x,\rho)\cap \Omega$, there holds
$$ \left|\Gbb^{\Omega}[f](x) - \Gbb^{\Omega}[f](\tilde x)\right| < \varepsilon.
$$
It means that the set $\{ \Gbb^{\Omega}[f]: f\in \mathcal{Q} \}$ is equicontinuous.  Invoking Arzel\`a--Ascoli theorem (see for instance \cite[Theorem 1.2.13]{DraMil_2013}), we conclude that the set $\{ \Gbb^{\Omega}[f]: f\in \mathcal{Q} \}$ is relatively compact. Thus $\Gbb^{\Omega}: L^r(\Omega,\delta^{\gamma}) \to C(\overline{\Omega})$ is compact. The proof is complete.
\end{proof}

\begin{remark} The compactness of $\Gbb^{\Omega}$ from $L^2(\Omega)$ into $L^2(\Omega)$ was obtained in \cite{BonFigVaz_2018} by using the Riesz-Fr\'echet-Kolmogorov Theorem. An alternative way prove this property consists of employing compact embedding $\Hbb(\Omega) \hookrightarrow \hookrightarrow L^2(\Omega)$. The question of compactness involving relevant spaces was asked in \cite{ChaGomVaz_2019_2020} and will be discussed in Appendix \ref{appendix:compact}. 
\end{remark}

\section{Kato's inequality} \label{sec:katoinequality}

In this section, we prove Kato's inequality for operators satisfying \eqref{eq_L1}--\eqref{eq_L4} and \eqref{eq_G2}--\eqref{eq_G4}.  We begin by presenting some properties of the function spaces introduced in Section \ref{sec:preliminaries}, which will be used in the sequel.

\begin{lemma}\label{lem:Hs00} 
(i) Assume that $u \in \Hbb(\Omega)$ and $h: \R \to \R$ is a Lipschitz function such that $h(0) = 0$. Then $h(u) \in \Hbb(\Omega)$.

(ii) Assume $u,v \in \Hbb(\Omega) \cap L^{\infty}(\Omega)$. Then $uv \in \Hbb(\Omega)$.	
\end{lemma}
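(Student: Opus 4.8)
The plan is to verify directly the two defining conditions for membership in $H^s_{00}(\Omega)$: namely that the function belongs to $H^s(\Omega)$ (equivalently, lies in $L^2(\Omega)$ and has finite Gagliardo seminorm) and that dividing it by $\delta^s$ yields an $L^2$ function. In both parts, each of the three terms appearing in the norm \eqref{H00-norm} will be controlled by the corresponding term of the norm(s) of the given function(s), via an elementary pointwise inequality followed by integration.

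For (i), let $L$ be the Lipschitz constant of $h$; since $h(0)=0$ we have $|h(t)|\le L|t|$ for all $t\in\R$. First I would observe that $h(u)$ is measurable (composition of a continuous function with a measurable one) and that $|h(u)|\le L|u|$ pointwise, whence $\int_{\Omega}|h(u)|^2\,dx\le L^2\int_{\Omega}|u|^2\,dx<\infty$ and likewise $\int_{\Omega}|h(u)|^2\delta^{-2s}\,dx\le L^2\int_{\Omega}|u|^2\delta^{-2s}\,dx<\infty$, so $h(u)\in L^2(\Omega)$ and $h(u)/\delta^s\in L^2(\Omega)$. For the seminorm, the Lipschitz bound gives $|h(u(x))-h(u(y))|\le L|u(x)-u(y)|$ for a.e.\ $x,y\in\Omega$, hence $[h(u)]_{H^s(\Omega)}^2\le L^2[u]_{H^s(\Omega)}^2<\infty$. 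Combining these, $h(u)\in H^s(\Omega)$ with $h(u)/\delta^s\in L^2(\Omega)$, i.e.\ $h(u)\in H^s_{00}(\Omega)$ (in fact $\norm{h(u)}_{H^s_{00}(\Omega)}\le L\norm{u}_{H^s_{00}(\Omega)}$).

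For (ii), set $M:=\norm{u}_{L^{\infty}(\Omega)}$ and $M':=\norm{v}_{L^{\infty}(\Omega)}$. The bounds $\int_{\Omega}|uv|^2\,dx\le (M')^2\int_{\Omega}|u|^2\,dx$ and $\int_{\Omega}|uv|^2\delta^{-2s}\,dx\le (M')^2\int_{\Omega}|u|^2\delta^{-2s}\,dx$ are immediate, so $uv\in L^2(\Omega)$ and $uv/\delta^s\in L^2(\Omega)$. The only step needing a small idea is the seminorm estimate, for which I would use the Leibniz-type splitting
\[
u(x)v(x)-u(y)v(y)=u(x)\left(v(x)-v(y)\right)+v(y)\left(u(x)-u(y)\right),
\]
which, together with $(a+b)^2\le 2a^2+2b^2$, gives $|u(x)v(x)-u(y)v(y)|^2\le 2M^2|v(x)-v(y)|^2+2(M')^2|u(x)-u(y)|^2$ for a.e.\ $x,y\in\Omega$. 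Integrating against $|x-y|^{-N-2s}$ over $\Omega\times\Omega$ yields $[uv]_{H^s(\Omega)}^2\le 2M^2[v]_{H^s(\Omega)}^2+2(M')^2[u]_{H^s(\Omega)}^2<\infty$, hence $uv\in H^s(\Omega)$, and therefore $uv\in H^s_{00}(\Omega)$.

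Since every estimate above is an elementary pointwise comparison followed by integration, I do not anticipate a genuine obstacle. The only points requiring care are bookkeeping ones: remembering that the definition of $H^s_{00}(\Omega)$ requires both $u\in H^s(\Omega)$ and $u/\delta^s\in L^2(\Omega)$, so all three terms of \eqref{H00-norm} must be checked; and, in (ii), choosing the asymmetric splitting so that the $L^\infty$ factor multiplies the difference quotient in each term, ensuring both resulting integrals are dominated by the finite seminorms $[u]_{H^s(\Omega)}$ and $[v]_{H^s(\Omega)}$.
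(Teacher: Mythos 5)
Your proposal is correct and follows essentially the same route as the paper: the pointwise bounds $|h(u)|\le L|u|$, $|h(u(x))-h(u(y))|\le L|u(x)-u(y)|$ for (i), and the Leibniz-type splitting of $u(x)v(x)-u(y)v(y)$ with the $L^\infty$ norms multiplying the difference quotients for (ii), each integrated against the three terms of \eqref{H00-norm}. The only cosmetic difference is that you track the factor $2$ from $(a+b)^2\le 2a^2+2b^2$ explicitly, which the paper's displayed estimate glosses over; this does not affect the conclusion.
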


\begin{proof} (i) The desired result follows straightforward from the definition of the $\Hbb(\Omega)$-norm in \eqref{H00-norm} and the Lipschitz properties of $h$.  

(ii) The desired result follows from the definition of the $\Hbb(\Omega)$-norm in \eqref{H00-norm} and the estimate
for any $x,y \in \Omega$,
\begin{align*}
	|u(x)v(x) - u(y)v(y)| \leq \norm{u}_{L^{\infty}(\Omega)} |v(x) - v(y)| + \norm{v}_{L^{\infty}(\Omega)} |u(x) - u(y)|, \quad x,y \in \Omega.
\end{align*}	
The detail of the proof is left to the reader. 	
\end{proof}

\begin{proposition}\label{prop:integrationbypart} Assume that \eqref{eq_L1} and \eqref{eq_L2bis} hold.  Then for every $u, v\in C^{\infty}_c(\Omega)$,
	\begin{equation}\label{eq:integrationbyparts}
		\int_{\Omega} v \L u dx  = \dfrac{1}{2} \int_{\Omega}\int_{\Omega} J(x,y)(u(x)-u(y))(v(x)-v(y)) dx dy + \int_{\Omega} B(x) u(x) v(x) dx.
	\end{equation}
\end{proposition}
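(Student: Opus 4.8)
The plan is to establish \eqref{eq:integrationbyparts} first for the truncated operators $\L_{\varepsilon}$ from \eqref{Lepsilon}, where everything is absolutely convergent, and then to let $\varepsilon \to 0^+$. Fix $u,v \in C^{\infty}_c(\Omega)$ and let $K \subset \Omega$ be a compact set outside of which both $u$ and $v$ vanish. For a fixed $\varepsilon>0$ I would first observe that, on $\{|x-y|>\varepsilon\}$, one has $\chi_{\varepsilon}(|x-y|)J(x,y) \le \varepsilon^{-2}|x-y|^2 J(x,y)$, so that by \eqref{eq_L1} (which ensures $\int_{\Omega}\int_{\Omega}|x-y|^2 J(x,y)\,dx\,dy<+\infty$) together with the crude bounds $|u(x)-u(y)| \le 2\norm{u}_{L^{\infty}(\Omega)}$ and $|v(x)| \le \norm{v}_{L^{\infty}(\Omega)}$, the double integral $\int_{\Omega}\int_{\Omega} |v(x)|J(x,y)|u(x)-u(y)|\chi_{\varepsilon}(|x-y|)\,dx\,dy$ is finite (in particular $\L_{\varepsilon}u$ is well defined a.e.). Hence Fubini's theorem applies to $\int_{\Omega} v(x)[\L_{\varepsilon}u(x)-B(x)u(x)]\,dx$; relabelling the dummy variables $x \leftrightarrow y$, using the symmetry $J(x,y)=J(y,x)$ and $\chi_{\varepsilon}(|x-y|)=\chi_{\varepsilon}(|y-x|)$, and the algebraic identity $v(x)(u(x)-u(y))+v(y)(u(y)-u(x)) = (u(x)-u(y))(v(x)-v(y))$, I would obtain
\[
\int_{\Omega} v\,\L_{\varepsilon}u\,dx = \dfrac12\int_{\Omega}\int_{\Omega} J(x,y)(u(x)-u(y))(v(x)-v(y))\chi_{\varepsilon}(|x-y|)\,dx\,dy + \int_{\Omega} B(x)u(x)v(x)\,dx,
\]
the last integral being finite because $B$ is locally bounded and $uv$ is supported in $K$.

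Next I would pass to the limit $\varepsilon\to 0^+$ in both sides. On the left, \eqref{pv} gives $\L_{\varepsilon}u(x)\to\L u(x)$ for a.e.\ $x$, and \eqref{eq_L2bis} furnishes $\varphi_u\in L^1_{\loc}(\Omega)$ and $\varepsilon_0>0$ with $|\L_{\varepsilon}u|\le\varphi_u$ a.e.\ for $\varepsilon\in(0,\varepsilon_0]$; since $v$ vanishes outside $K$, the majorant $\norm{v}_{L^{\infty}(\Omega)}\varphi_u\mathbf{1}_K$ lies in $L^1(\Omega)$, so dominated convergence yields $\int_{\Omega} v\,\L_{\varepsilon}u\,dx\to\int_{\Omega} v\,\L u\,dx$. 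On the right, $\chi_{\varepsilon}(|x-y|)\to 1$ for $x\ne y$, and since $u,v\in C^{\infty}_c(\Omega)$ are Lipschitz the integrand is dominated by $\norm{\nabla u}_{L^{\infty}(\Omega)}\norm{\nabla v}_{L^{\infty}(\Omega)}|x-y|^2 J(x,y)$, integrable over $\Omega\times\Omega$ by \eqref{eq_L1}; a second dominated convergence argument gives convergence of the double integral to $\frac12\int_{\Omega}\int_{\Omega} J(x,y)(u(x)-u(y))(v(x)-v(y))\,dx\,dy$, while the $B$-term is independent of $\varepsilon$. Combining these limits with the displayed identity proves \eqref{eq:integrationbyparts}.

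The computation itself is routine bookkeeping, so the only real obstacle is justifying the interchange of limit and integral on the left-hand side: the principal value $\L u = \lim_{\varepsilon\to 0^+}\L_{\varepsilon}u$ need not converge in any $L^p$ sense a priori, and it is precisely assumption \eqref{eq_L2bis} that supplies the uniform-in-$\varepsilon$ integrable dominating function needed to push the limit through. On the right-hand side, the moment condition $\int\int|x-y|^2 J<+\infty$ in \eqref{eq_L1} together with the Lipschitz regularity of the test functions does double duty, legitimizing both the Fubini step and the second dominated convergence.
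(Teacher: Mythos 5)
Your proposal is correct and follows essentially the same route as the paper: symmetrize $\int_\Omega v\,\L_{\varepsilon}u\,dx$ at fixed $\varepsilon$ using the symmetry of $J$ and a change of variables, then let $\varepsilon\to 0^+$ with dominated convergence on the left via the majorant $\varphi_u$ from \eqref{eq_L2bis} (times $v$, compactly supported) and on the right via the Lipschitz bound and the moment condition $\int_\Omega\int_\Omega|x-y|^2J\,dx\,dy<+\infty$ from \eqref{eq_L1}. Your extra explicit Fubini justification at fixed $\varepsilon$ (bounding $\chi_\varepsilon J\le\varepsilon^{-2}|x-y|^2J$) is a harmless refinement of a step the paper performs implicitly.
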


\begin{proof} Let $u,v \in C^{\infty}_c(\Omega)$. We first notice that the right-hand side of \eqref{eq:integrationbyparts}
	is finite.  Indeed, let $K \subset \subset \Omega$ such that $\supp u, \supp v \subset \subset K \subset \subset \Omega$. Then by assumption \eqref{eq_L1}, we have
	\begin{align*}
		&\dfrac{1}{2}\int_{\Omega} \int_{\Omega}\left|J(x,y)(u(x)-u(y))(v(x)-v(y))\right|dx dy+ \int_{\Omega}\left|B(x)u(x)v(x)\right| dx\\
		\le \,\, & \dfrac{1}{2} \norm{\nabla u}_{L^{\infty}(\Omega)}\norm{\nabla v}_{L^{\infty}(\Omega)} \int_{\Omega} \int_{\Omega} J(x,y) |x-y|^2dx dy + \norm{u}_{L^{\infty}(\Omega)}\norm{ v}_{L^{\infty}(\Omega)} \int_{K} B(x) dx < +\infty.
	\end{align*}
	
Let $\varepsilon > 0$ be sufficiently small. By \eqref{Lepsilon}, we obtain
	\begin{align*}
		\int_{\Omega} v(x)\L_{\varepsilon} u(x) dx 
		&= \int_{\Omega} v(x)\left[\int_{\Omega} J(x,y)(u(x)-u(y))\chi_{\varepsilon}(|x-y|) dy + B(x)u(x)\right] dx.
	\end{align*}
	Using the symmetry property of $J(x,y)$ and by the change of variables, we get
	\begin{align*}
		&\int_{\Omega} v(x)\left[\int_{\Omega} J(x,y)(u(x)-u(y))\chi_{\varepsilon}(|x-y|) dy \right] dx  \\
		&\quad = \dfrac{1}{2} \int_{\Omega}\int_{\Omega}J(x,y) (u(x)-u(y))(v(x)-v(y)) \chi_{\varepsilon}(|x-y|) dx dy,
	\end{align*}
	which implies
	\begin{align*}
		\int_{\Omega} v \L_{\varepsilon} u dx  &=  \dfrac{1}{2} \int_{\Omega}\int_{\Omega}J(x,y) (u(x)-u(y))(v(x)-v(y)) \chi_{\varepsilon}(|x-y|) dx dy \\
		&\quad + \int_{\Omega} B(x)u(x)v(x) dx.
	\end{align*}
	Notice that $|v \L_{\varepsilon} u| \le |v \varphi_u|$ and $v\varphi_u \in L^1(\Omega)$ by assumption \eqref{eq_L2bis}. Furthermore, we have
	\begin{align*}
		&|u(x)-u(y)||v(x)-v(y)| J(x,y)\chi_{\varepsilon}(|x-y|) \le |u(x)-u(y)||v(x)-v(y)| J(x,y),\\
		& \int_{\Omega}\int_{\Omega}|u(x)-u(y)||v(x)-v(y)| J(x,y) dxdy < +\infty,
	\end{align*}
and $\chi_{\varepsilon}(|x-y|) \to 1$ as $\varepsilon \to 0$. Hence, letting $\varepsilon \to 0^+$ and using the dominated convergence theorem, we conclude that
	\begin{align*}
	  \int_{\Omega} v \L u dx 
		&= \lim_{\varepsilon \to 0^+} \int_{\Omega} v \L_{\varepsilon} u dx  \\
		&= \lim_{\varepsilon \to 0^+}  \dfrac{1}{2} \int_{\Omega}\int_{\Omega} [u(x)-u(y)][v(x)-v(y)] J(x,y)\chi_{\varepsilon}(|x-y|) dx dy  + \int_{\Omega} B(x)u(x)v(x) dx \\
		&= \dfrac{1}{2} \int_{\Omega}\int_{\Omega} [u(x)-u(y)][v(x)-v(y)] J(x,y) dx dy + \int_{\Omega} B(x)u(x)v(x) dx,
	\end{align*}
	which is the desired result. 
\end{proof}

The following lemma lies at the core of the proof of Kato's inequality. 

\begin{lemma}[Kato's inequality in function framework] \label{lem:kato} Assume that \eqref{eq_L1}--\eqref{eq_L4} and \eqref{eq_G2} hold. Let $f \in L^1(\Omega,\delta^{\gamma})$, $u = \Gbb^{\Omega} [f]$ and $p \in C^{1,1}(\R)$ be a convex function such that $p(0) = p'(0) = 0$ and $|p'| \le 1$. Then 
\begin{equation}\label{eq:kato1}
\int_{\Omega} p(u) \xi dx \le \int_{\Omega} fp'(u) \Gbb^{\Omega}[\xi] dx,
\end{equation}
for every $\xi \in \delta^{\gamma}L^{\infty}(\Omega)$ such that $\Gbb^{\Omega}[\xi] \ge 0$ a.e. in $\Omega$.
\end{lemma}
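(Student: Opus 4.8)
The plan is to establish \eqref{eq:kato1} by approximating $u = \Gbb^{\Omega}[f]$ with solutions of linear problems with smooth data, applying the algebraic Kato-type inequality at the level of the bilinear form $\B$, and then passing to the limit. First I would exploit the density of $C^{\infty}_c(\Omega)$ in $L^1(\Omega,\delta^\gamma)$ (in the appropriate sense) to pick $f_n \in C^{\infty}_c(\Omega)$ with $f_n \to f$ in $L^1(\Omega,\delta^\gamma)$, and set $u_n := \Gbb^{\Omega}[f_n]$. By \eqref{eq_G1}, $u_n \in \dom(\L) \subset \Hbb(\Omega) = H^s_{00}(\Omega)$ and $\L u_n = f_n$ in $L^2(\Omega)$. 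The key structural input is formula \eqref{LJB}: since $p$ is $C^{1,1}$ with $p(0)=p'(0)=0$ and $|p'|\le 1$, Lemma \ref{lem:Hs00}(i) guarantees $p(u_n), p'(u_n) \in H^s_{00}(\Omega)$, so these are legitimate elements of the form domain.

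The heart of the argument is the pointwise convexity inequality: for the nonlocal part, convexity of $p$ gives
\[
J(x,y)\bigl(u_n(x)-u_n(y)\bigr)\bigl(p'(u_n(x))-p'(u_n(y))\bigr) \ge J(x,y)\bigl(p(u_n(x))-p(u_n(y))\bigr)\bigl(\mathbf{1}-\cdots\bigr),
\]
more precisely one uses $p(a)-p(b) \le p'(a)(a-b)$ together with symmetrization in $x,y$ to obtain
\[
\tfrac12\!\int_\Omega\!\!\int_\Omega\! J(x,y)(u_n(x)-u_n(y))(p'(u_n(x))-p'(u_n(y)))\,dxdy \ge \tfrac12\!\int_\Omega\!\!\int_\Omega\! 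J(x,y)(p(u_n(x))-p(u_n(y)))(\psi(x)-\psi(y))\,dxdy
\]
for a suitable test function $\psi$; combined with the zeroth-order term $B u_n p'(u_n) \ge B\, p(u_n)$ (valid since $t\, p'(t) \ge p(t) \ge 0$ by convexity and $p(0)=0$), this yields $\B(p(u_n),\psi) \le \B(u_n, p'(u_n)\psi)$... but the cleaner route, which I would follow, is to test directly: for $\xi\in\delta^\gamma L^\infty(\Omega)$ with $\Gbb^{\Omega}[\xi]\ge 0$, write $\zeta := \Gbb^{\Omega}[\xi]$, observe $\L\zeta = \xi$ formally, and use the self-adjointness / integration-by-parts identity \eqref{eq:integrationbypart1} extended to $\Hbb(\Omega)$ to get $\int_\Omega p(u_n)\xi\,dx = \B(p(u_n),\zeta)$. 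The convexity inequality $p'(u_n(x))(u_n(x)-u_n(y)) \ge p(u_n(x))-p(u_n(y))$, inserted into $\B$ and symmetrized, then gives
\[
\B(p(u_n),\zeta) \le \B\bigl(u_n, p'(u_n)\zeta\bigr) \quad\text{provided } \zeta\ge 0,
\]
and the right side equals $\int_\Omega f_n\, p'(u_n)\zeta\,dx$ by \eqref{eq_G1} and the integration-by-parts formula, since $p'(u_n)\zeta$ is an admissible test object. This gives \eqref{eq:kato1} for $u_n$.

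Finally I would pass to the limit $n\to\infty$. Since $f_n \to f$ in $L^1(\Omega,\delta^\gamma)$, Proposition \ref{prop:marcinestimate} (with $\gamma'=\gamma$, $\alpha=\gamma$) gives $u_n \to u$ in $L^1(\Omega,\delta^\gamma)$ and, up to a subsequence, a.e. in $\Omega$; then $p(u_n)\to p(u)$ and $p'(u_n)\to p'(u)$ a.e., with $|p(u_n)| \le |u_n|$ and $|p'(u_n)|\le 1$. Writing the left side as $\int_\Omega p(u_n)\xi\,dx$ with $|\xi|\lesssim \delta^\gamma$, dominated convergence (dominant $C|u_n|\delta^\gamma$, whose convergence in $L^1$ supplies equi-integrability via Vitali) handles the passage; for the right side $\int_\Omega f_n p'(u_n)\Gbb^\Omega[\xi]\,dx$ one combines $f_n\to f$ in $L^1(\Omega,\delta^\gamma)$, the bound $\|\Gbb^\Omega[\xi]\|_{L^\infty} \lesssim \|\delta^{-\gamma}\xi\|_{L^\infty}$ coming from the estimate $\Gbb^\Omega[\delta^\gamma]\in L^\infty$ (consequence of \eqref{G-est}), and a.e. convergence of $p'(u_n)$ with Vitali/dominated convergence. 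The main obstacle I anticipate is the rigorous justification of the integration-by-parts identity $\int_\Omega p(u_n)\xi\,dx = \B(p(u_n), \Gbb^\Omega[\xi])$ and $\B(u_n, p'(u_n)\Gbb^\Omega[\xi]) = \int_\Omega f_n p'(u_n)\Gbb^\Omega[\xi]\,dx$: one must know that $\Gbb^\Omega[\xi]\in\Hbb(\Omega)$ for $\xi\in\delta^\gamma L^\infty(\Omega)$ (not merely for smooth $\xi$), that products like $p'(u_n)\Gbb^\Omega[\xi]$ stay in the form domain (here Lemma \ref{lem:Hs00}(ii) is used, needing boundedness of $\Gbb^\Omega[\xi]$), and that the bilinear form pairing is continuous enough to carry the limit — these are exactly the "fine properties of variational solutions" the introduction flags, so I would isolate them as separate lemmas before assembling the proof.
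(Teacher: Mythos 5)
Your plan is essentially the paper's proof: prove the inequality first for smooth data via the variational identity $\int_\Omega w\,\zeta\,dx=\mathcal B(\Gbb^{\Omega}[w],\zeta)$ (this is exactly \eqref{eq:variational}, quoted from \cite[Proposition 5.2]{TruongTai_2020}, so your worry about justifying it is settled there, together with $\Gbb^{\Omega}[\xi]\in\Hbb(\Omega)\cap L^{\infty}(\Omega)$ from \cite[Propositions 4.11 and 5.2]{TruongTai_2020}), use Lemma \ref{lem:Hs00} to keep $p(u)$, $p'(u)$ and the product $p'(u)\Gbb^{\Omega}[\xi]$ in the form domain, exploit convexity to get $\mathcal B\bigl(u,p'(u)\Gbb^{\Omega}[\xi]\bigr)\ge\mathcal B\bigl(p(u),\Gbb^{\Omega}[\xi]\bigr)$ using $\Gbb^{\Omega}[\xi]\ge 0$ (the paper does this by the elementary $a_1b_1\ge a_2b_2$ computation, i.e.\ the two one-sided convexity inequalities multiplied by $\Gbb^{\Omega}[\xi](x)\ge0$ and $\Gbb^{\Omega}[\xi](y)\ge0$, plus $tp'(t)\ge p(t)\ge0$ for the $B$-term — this is the symmetrization you gesture at, and your first displayed ``inequality'' is garbled but superseded by this cleaner formulation), and then approximate $f\in L^1(\Omega,\delta^\gamma)$ by $f_n\in C^\infty_c(\Omega)$ and pass to the limit.

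One concrete flaw in your limit passage: for the right-hand side you invoke only $\|\Gbb^{\Omega}[\xi]\|_{L^\infty}\lesssim\|\delta^{-\gamma}\xi\|_{L^\infty}$, but this is not enough. Since $f_n\to f$ only in $L^1(\Omega,\delta^\gamma)$ and $f$ need not belong to $L^1(\Omega)$, the term $\int_\Omega (f_n-f)\,p'(u_n)\,\Gbb^{\Omega}[\xi]\,dx$ cannot be controlled by an unweighted $L^\infty$ bound on $\Gbb^{\Omega}[\xi]$; you need the weighted bound $\Gbb^{\Omega}[\xi]\in\delta^{\gamma}L^{\infty}(\Omega)$, i.e.\ $\Gbb^{\Omega}[\xi]\lesssim\delta^{\gamma}$, which does hold for $\xi\in\delta^\gamma L^\infty(\Omega)$ because $\Gbb^{\Omega}[\delta^{\gamma}]\sim\delta^{\gamma}$, and is precisely what the paper uses (citing \cite[Proposition 3.5]{ChaGomVaz_2019_2020}) together with the generalized dominated convergence theorem ($|f_np'(u_n)|\le|f_n|$, $f_n\to f$ in $L^1(\Omega,\delta^\gamma)$) to conclude. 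With that replacement your argument closes and coincides with the paper's proof.
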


\begin{proof}  The proof is divided into two steps.  Recall that by \cite[Proposition 5.2]{TruongTai_2020}, under assumptions \eqref{eq_L1}--\eqref{eq_L2} and \eqref{eq_G2}, for $w \in L^2(\Omega)$  one has $\Gbb^{\Omega}[w] \in \Hbb(\Omega)$ and 
\begin{equation}\label{eq:variational}
	\int_{\Omega} w\zeta dx = \inner{\Gbb^{\Omega}[w],\zeta}_{\Hbb(\Omega)},\quad \forall \zeta \in \Hbb(\Omega).
\end{equation}

\textbf{Step 1.} Consider the case $f \in C^{\infty}_c(\Omega)$.  In this case, $u = \Gbb^{\Omega}[f] \in \Hbb(\Omega)$ and since $p'$ is Lipschitz with $|p'| \le 1$, we have $p'(u) \in \Hbb(\Omega) \cap L^{\infty}(\Omega)$ by Lemma \ref{lem:Hs00}.  On the other hand, $\Gbb^{\Omega}[\xi] \in \Hbb(\Omega) \cap L^{\infty}(\Omega)$ for every $\xi \in \delta^{\gamma}L^{\infty}(\Omega)$ (see \cite[Proposition 4.11 and Proposition 5.2]{TruongTai_2020}). This together with Lemma \ref{lem:Hs00} again implies $p'(u) \Gbb^{\Omega}[\xi] \in \Hbb(\Omega)$.

In \eqref{eq:variational}, choosing $\zeta=p'(u) \Gbb^{\Omega}[\xi] \in \Hbb(\Omega)$ and $w= f \in C^{\infty}_c(\Omega) \subset L^2(\Omega)$, we obtain
\begin{equation}\label{eq:equa1}
\begin{aligned}
\int_{\Omega} fp'(u) \Gbb^{\Omega}[\xi] dx &= \inner{ u, p'(u)\Gbb^{\Omega}[\xi] }_{\Hbb(\Omega)} 
\\ &= {\frac{1}{2}}\int_{\Omega} \int_{\Omega}  V(x,y)J(x,y)dxdy + \int_{\Omega} B u p'(u) \Gbb^{\Omega}[\xi] dx,
\end{aligned}
\end{equation}
where 
$$V(x,y):= [ u(x) - u(y)]\left[p'(u(x))\Gbb^{\Omega}[\xi](x) -  p'(u(y))\Gbb^{\Omega}[\xi](y)\right], \quad x,y \in \Omega.$$
The second equality in \eqref{eq:equa1} follows from \eqref{bilinear} and \eqref{innerH}.

Put
\begin{align*}
	&a_1: = \Gbb^{\Omega}[\xi](x) ,  \quad b_1: = (u(x)- u(y))p'(u(x)) - (p(u(x)) - p(u(y))),\\
	&a_2: = \Gbb^{\Omega}[\xi](y),  \quad b_2: = (u(x)-u(y))p'(u(y)) - (p(u(x)) - p(u(y))).
\end{align*}
Then we can write $V$ as
$$
V(x,y) = a_1b_1 - a_2b_2 +\left(p(u(x)) - p(u(y))\right)\left(\Gbb^{\Omega}[\xi](x) - \Gbb^{\Omega}[\xi](y)\right), \quad x,y \in \Omega.
$$
Since $p$ is convex, for every $x,y \in \Omega$, we have
\begin{align*}
p(u(x)) - p(u(y)) &\le  p'(u(x)) (u(x) - u(y)), \\ p(u(x)) - p (u(y)) &\ge p'(u(y))(u(x) - u(y)),
\end{align*}
which implies $b_1 \ge 0 \ge b_2$.  Noticing that $a_1, a_2 \ge 0$, we have $a_1b_1 \ge a_2b_2$, which yields
\begin{equation}\label{eq:compareO1}
\begin{aligned}
V(x,y) \ge (p(u(x)) - p(u(y)))(\Gbb^{\Omega}[\xi](x) - \Gbb^{\Omega}[\xi](y)),\quad x,y \in \Omega.
\end{aligned}
\end{equation}
Since $p$ is convex and $p(0) = 0$, it follows that $p(t) \le t p'(t), t \in \R$. Combining this with \eqref{eq:equa1} and \eqref{eq:compareO1},  we deduce
\begin{equation}\label{eq:equa3}
\begin{aligned}
\int_{\Omega} fp'(u) \Gbb^{\Omega}[\xi] dx &\geq {\frac{1}{2}}\int_{\Omega} \int_{\Omega} [p(u(x))-p(u(y))][\Gbb^{\Omega}[\xi](x) - \Gbb^{\Omega}[\xi](y)]J(x,y)dx dy \\ &\quad+ \int_{\Omega} Bp(u)\Gbb^{\Omega}[\xi] dx \\  &= \inner{ p(u), \Gbb^{\Omega} [\xi]}_{\Hbb(\Omega)}.
\end{aligned}
\end{equation}
Here we have used the fact that $p(u) \in  \Hbb(\Omega)$ by Lemma \ref{lem:Hs00} with $p(0) = 0$, $p$ is Lipschitz. Finally, replacing $w$ by $\xi \in \delta^{\gamma}L^{\infty}(\Omega) \subset L^2(\Omega)$ and $\zeta$ by $p(u)\in \Hbb(\Omega)$ in \eqref{eq:variational}, we deduce that
\begin{equation}\label{eq:kato1bis}
\int_{\Omega} fp'(u) \Gbb^{\Omega}[\xi] dx \ge \inner{ p(u), \Gbb^{\Omega} [\xi]}_{\Hbb(\Omega)} = \int_{\Omega} p(u) \xi dx,\quad \forall \xi \in \delta^{\gamma}L^{\infty}(\Omega), \Gbb^{\Omega}[\xi] \ge 0.
\end{equation}
Hence, \eqref{eq:kato1} follows for $f \in C^{\infty}_c(\Omega)$.

\textbf{Step 2.} Consider $f \in L^1(\Omega,\delta^{\gamma})$. Let  $\{ f_n\}_{n \in \mathbb{N}} \subset C^{\infty}_c(\Omega)$ be a sequence converging to $f$ in $L^1(\Omega,\delta^{\gamma})$ and a.e. in $\Omega$. Set $u=\Gbb^{\Omega}[f]$ and $u_n = \Gbb^{\Omega}[f_n]$, $n \in \mathbb{N}$. Since $f_n \to f$ in $L^1(\Omega,\delta^{\gamma})$ and the map $\Gbb^{\Omega}: L^1(\Omega,\delta^{\gamma}) \to L^1(\Omega,\delta^{\gamma})$ is continuous, up to a subsequence, we deduce that $u_n \to u \text{ in }L^1(\Omega,\delta^{\gamma}) \text{ and a.e. in }\Omega$. 

By the assumption $p \in C^{1,1}(\R)$ and $|p'| \leq 1$, we obtain $|p(u_n)-p(u)| \leq |u_n-u|$ in $\Omega$, which implies that $p(u_n) \to p(u)$ in $L^1(\Omega,\delta^\gamma)$ and a.e. in $\Omega$. In particular,
\begin{equation} \label{puun}
\lim_{n \to \infty}\int_{\Omega}p(u_n)\xi dx = \int_{\Omega}p(u)\xi dx, \quad \forall \xi \in \delta^\gamma L^\infty(\Omega).	
\end{equation}

Now, notice that $f_np'(u_n) \to fp'(u) \text{ a.e. in }\Omega$, $|f_n p'(u_n)| \le |f_n|$ and $f_n \to f$ in $L^1(\Omega,\delta^{\gamma})$. By using the generalized dominated convergence theorem, we obtain $f_np'(u_n) \to fp'(u)$ in $L^1(\Omega,\delta^\gamma)$. For any $\xi \in \delta^\gamma L^\infty(\Omega)$, we  have $\Gbb^{\Omega}[\xi] \in \delta^\gamma L^\infty(\Omega)$ due to \cite[Proposition 3.5]{ChaGomVaz_2019_2020}. Therefore
\begin{equation} \label{fpuun}
\lim_{n \to \infty}\int_{\Omega}f_np'(u_n) \Gbb^{\Omega}[\xi] dx = \int_{\Omega}fp'(u) \Gbb[\xi] dx, \quad \forall \xi \in \delta^\gamma L^\infty(\Omega).	
\end{equation}
Applying \eqref{eq:kato1bis} with $f$ replaced by $f_n \in C^{\infty}_c(\Omega)$, we obtain
\begin{equation}\label{eq:kato2}
\int_{\Omega} f_n p'(u_n) \Gbb^{\Omega}[\xi] dx \ge \int_{\Omega} p(u_n) \xi dx,\quad \forall \xi \in \delta^{\gamma}L^{\infty}(\Omega), \Gbb^{\Omega} [\xi]\ge 0.
\end{equation}
Combining \eqref{puun}--\eqref{eq:kato2}, we derive
\begin{align*}
\int_{\Omega} p(u) \xi dx = \lim_{n \to \infty}  \int_{\Omega} p(u_n) \xi dx  &\leq  \lim_{n \to \infty} \int_{\Omega} f_n p'(u_n) \Gbb^{\Omega} [\xi] dx \\
&= \int_{\Omega} f p'(u) \Gbb^{\Omega}[\xi] dx,\quad \forall \xi \in \delta^{\gamma}L^{\infty}(\Omega), \Gbb^{\Omega} [\xi]\ge 0.
\end{align*}
We complete the proof.
\end{proof}

\begin{theorem}[Kato's inequality in measure framework] \label{thm:kato} Assume that \eqref{eq_L1}--\eqref{eq_L4} and \eqref{eq_G2} hold. 
	
	$(i)$ Let $f \in L^1(\Omega,\delta^{\gamma})$ and put $u = \Gbb^{\Omega} [f]$. Then
	\begin{equation}\label{eq:kato_abs}
		\int_{\Omega} |u|\xi dx \le \int_{\Omega} \sgn(u) f \Gbb^{\Omega}[\xi]dx,
	\end{equation}
	\begin{equation}\label{eq:kato_main}
		\int_{\Omega} u^+\xi dx \le \int_{\Omega} \sgn^+(u) f \Gbb^{\Omega}[\xi]dx,
	\end{equation}
	for all $\xi \in \delta^{\gamma}L^{\infty}(\Omega)$ such that $\Gbb^{\Omega}[\xi] \ge 0$ a.e. in $\Omega$.
	
	$(ii)$ Assume in addition that \eqref{eq_G3bis} and \eqref{eq_G4} hold. Let $f \in L^1(\Omega,\delta^{\gamma})$, $\mu \in \M(\Omega,\delta^\gamma)$ and put $u = \Gbb^{\Omega} [f] + \Gbb^{\Omega}[\mu]$. Then
	\begin{equation}\label{eq:kato_main2}
		\int_{\Omega} u^+\xi dx \le \int_{\Omega} \sgn^+(u) \Gbb^{\Omega}[\xi]fdx + \int_{\Omega} \Gbb^{\Omega}[\xi] d\mu^+ ,
	\end{equation}
	\begin{equation}\label{eq:kato_abs2}
		\int_{\Omega} |u|\xi dx \le \int_{\Omega} \sgn(u) \Gbb^{\Omega}[\xi]fdx + \int_{\Omega} \Gbb^{\Omega}[\xi] d\mu^+ ,
	\end{equation}
	for all $\xi \in \delta^{\gamma}L^{\infty}(\Omega)$ such that $\Gbb^{\Omega}[\xi] \ge 0$ a.e. in $\Omega$.
\end{theorem}

\begin{remark} Various versions of Kato's inequality have been established for the classical Laplacian \cite{MarVer_2014}, for the RFL \cite[Proposition 2.4]{CheVer_2014} and for the SFL \cite[Lemma 31]{AbaDup_2017} where weighted, continuous up to the boundary functions are chosen as test functions. In contrast, our proof of Theorem \ref{thm:kato} relies on formula \eqref{LJB} and the properties of the space $\Hbb(\Omega)$, which allows us to deal with different types of nonlocal operators such as RFL, SFL and CFL. When measures are involved, \eqref{eq:kato_main2} and \eqref{eq:kato_abs2} are proved by an approximation argument in which the stability result Corollary \ref{cor:stability} is employed under additional conditions \eqref{eq_G3bis} and \eqref{eq_G4}.

As far as we know, in the literature, different variants of Kato's inequality are proved in the context of certain notion of solutions to linear equations. However, Theorem \ref{thm:kato} is stated for functions admitting a Green representation without appealing prematurely any notion of solutions. Thereby, unnecessary potential restriction on the application of Theorem \ref{thm:kato} might be avoided. For instance, Kato's inequality \eqref{eq:kato_main} can be applied to show the uniqueness of weak solutions of semilinear equations involving RFL in \cite{CheVer_2014} due to the fact that these solutions admit a Green representation. 
\end{remark}

\begin{proof}[{\sc Proof of Proposition \ref{thm:kato}}{\normalfont (i)}] Consider the sequence $\{ p_k \}_{k \in \mathbb{N}}$ given by
\begin{equation}\label{eq:sequencepk}
p_k(t) := \left\{ \begin{aligned} &|t| - \frac{1}{2k} &&\text{ if }|t| \ge \frac{1}{k},\\[3pt] 
&\frac{kt^2}{2} &&\text{ if } |t| < \frac{1}{k}. 
\end{aligned} \right.
\end{equation}
Then for every $k \in \mathbb{N}$, $p_k \in C^{1,1}(\R)$ is convex, $p_k(0) = (p_k)'(0) = 0$ and $|(p_k)'| \le 1$.  Hence, applying Lemma \ref{lem:kato} with $p = p_k$, one has
\begin{equation} \label{pku}
\int_{\Omega} p_k(u) \xi dx \le \int_{\Omega} f (p_k)'(u) \Gbb^{\Omega}[\xi] dx \le \int_{\Omega} |f| \Gbb^{\Omega}[\xi] dx,\quad \forall \xi \in \delta^{\gamma}L^{\infty}(\Omega), \Gbb^{\Omega} [\xi]\ge 0.
\end{equation}
Notice that $p_k(t) \to |t|$ and $(p_k)'(t) \to \sgn (t)$ as $k \to \infty$.  Hence, letting $k \to \infty$ in \eqref{pku} and using the dominated convergence theorem, we obtain \eqref{eq:kato_abs}.  Finally, by the integration-by-parts formula (see \cite[Lemma 4.4]{TruongTai_2020}), we have 
\[\int_{\Omega} u \xi dx = \int_{\Omega} f \Gbb^{\Omega}[\xi]dx, \quad \forall \xi \in \delta^{\gamma}L^{\infty}(\Omega).\]
This and \eqref{eq:kato_abs} imply \eqref{eq:kato_main}. The proof is complete.
\end{proof}

We next prove a version of Kato's inequality involving measures.

\begin{proof}[{\sc Proof of Proposition~\ref{thm:kato}}{\normalfont (ii)}] Let $\{ \mu_{i,n}\}_{n \in \N} \subset L^1(\Omega,\delta^{\gamma})$, $i=1,2$, be sequences of nonnegative functions such that $\{\mu_{1,n}\}_{n \in \mathbb{N}}$ and $\{\mu_{2,n}\}_{n \in \mathbb{N}}$ converge weakly to $\mu^+$ and $\mu^-$ in $\M(\Omega,\delta^\gamma)$ respectively. The existence of $\{ \mu_{i,n}\}_{n \in \N}$ is standard by noticing that the measures $\delta^\gamma \mu^+ \in \M(\Omega)$ and $\delta^\gamma \mu^- \in \M(\Omega)$ can be approximated by sequences $\{\psi_{1,n}\}_{n \in \N} \subset C_c^\infty(\Omega)$ and $\{\psi_{2,n}\}_{n \in \N} \subset C_c^\infty(\Omega)$ respectively in the weak* sense and by putting $\mu_{i,n}=\psi_{i,n}\delta^{-\gamma}$.
	
	  Put $\mu_n:=\mu_{1,n}- \mu_{2,n}$ then $\{\mu_n\}_{n \in \mathbb{N}} \subset L^1(\Omega,\delta^\gamma)$ and  $\{\mu_n\}_{n \in \mathbb{N}}$ converges weakly to $\mu$ in $\M(\Omega,\delta^\gamma)$. Denote $u_n := \Gbb^{\Omega}[f + \mu_n]$, $n \in \mathbb{N}$. By Corollary \ref{cor:stability}, up to a subsequence we have $u_n \to u$ in $L^1(\Omega,\delta^{\gamma})$ and a.e. in $\Omega$. Using Lemma \ref{lem:kato} and the estimate $|p'| \leq 1$, we obtain
\begin{align*}
\int_{\Omega} p(u_n)\xi dx &\le \int_{\Omega} p'(u_n) \Gbb^{\Omega}[\xi]fdx + \int_{\Omega} p'(u_n)\Gbb^{\Omega}[\xi] \mu_n dx\\
&\le \int_{\Omega} p'(u_n) \Gbb^{\Omega}[\xi]fdx + \int_{\Omega} \Gbb^{\Omega}[\xi]  \mu_{1,n} dx,
\end{align*}
for every $\xi \in \delta^{\gamma}L^{\infty}(\Omega)$ such that $\Gbb^{\Omega} [\xi] \ge 0$ a.e. in $\Omega$.  Noticing that $p \in C^{1,1}(\R)$, $u_n \to u \text{ in }L^1(\Omega,\delta^{\gamma})$ and a.e. in $\Omega$, $p(u_n) \to p(u)$ in $L^1(\Omega,\delta^\gamma)$ and $\{\mu_{1,n}\}_{n \in \mathbb{N}}$ converges weakly to $\mu^+$,
we can pass to the limit to get
\begin{equation}\label{eq:corolkato1}
\int_{\Omega} p(u)\xi dx \le \int_{\Omega} p'(u) \Gbb^{\Omega}[\xi]fdx + \int_{\Omega} \Gbb^{\Omega}[\xi] d\mu^+,\quad \forall \xi \in \delta^{\gamma}L^{\infty}(\Omega), \Gbb^{\Omega}[\xi] \ge 0.
\end{equation}
Now consider the sequence $\{ p_k\}_{k \in \mathbb{N}}$ as in \eqref{eq:sequencepk}.  In \eqref{eq:corolkato1},  replacing $p$ by $p_k$ and letting $k \to \infty$,  we derive \eqref{eq:kato_abs2}. 

Next, by the integration-by-parts formula (see \cite[Lemma 4.4]{TruongTai_2020}), we have 
$$\int_{\Omega} u \xi dx = \int_{\Omega} f \Gbb^{\Omega}[\xi]dx + \int_{\Omega}  \Gbb^{\Omega}[\xi]d\mu, \quad \forall \xi \in \delta^{\gamma}L^{\infty}(\Omega).$$
This and \eqref{eq:kato_abs2} imply
\eqref{eq:kato_main2}. The proof is complete.
\end{proof}
\section{Semilinear elliptic equations with measure data}\label{sec:semilinear}
In this section,  we study the existence and uniqueness of solutions to problem \eqref{eq:semilinear_absorption}.  Recall that a function $u$ is a weak-dual solution of \eqref{eq:semilinear_absorption} if $u \in L^1(\Omega,\delta^{\gamma})$, $g(u) \in  L^1(\Omega,\delta^{\gamma})$ and
$$\int_{\Omega} u\xi dx + \int_{\Omega} g(u)\Gbb^{\Omega}[\xi] dx = \int_{\Omega} \Gbb^{\Omega}[\xi] d\mu,\quad \forall \xi \in \delta^{\gamma}L^{\infty}(\Omega).$$

The idea of the proof of Theorem \ref{theo:subsupersolution} is based on method of sub- and supersolutions introduced in \cite{MonPon_2008}.  We begin with the following equi-integrability result.
\begin{lemma}
Assume $v_1,v_2 \in L^1(\Omega,\delta^\gamma)$ such that $v_1 \leq v_2$ and $g(v_1), g(v_2) \in L^1(\Omega,\delta^\gamma)$. 
Then the set 
$$\mathcal{F}:= \left\{ g(v) \in L^1(\Omega,\delta^{\gamma}) :v \in L^1(\Omega,\delta^{\gamma}) \text{ and }v_1 \le v \le v_2 \text{ a.e. in }\Omega\right\}$$
is equi-integrable. 
\end{lemma}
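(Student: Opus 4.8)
The plan is to show that every member of $\mathcal{F}$ is dominated by one fixed function in $L^1(\Omega,\delta^{\gamma})$, and then to conclude by the absolute continuity of the integral on the finite measure space $(\Omega,\delta^{\gamma}dx)$. The only place where the hypotheses on $g$ are genuinely used is the monotonicity, which turns the order relation $v_1\le v\le v_2$ into a pointwise bound on $g(v)$.

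First I would record that, since $g$ is nondecreasing, for any $v\in L^1(\Omega,\delta^{\gamma})$ with $v_1\le v\le v_2$ a.e. in $\Omega$ one has
\begin{equation*}
g(v_1)\le g(v)\le g(v_2)\quad\text{a.e. in }\Omega,
\end{equation*}
and hence $|g(v)|\le |g(v_1)|+|g(v_2)|=:h$ a.e. in $\Omega$. By hypothesis $g(v_1),g(v_2)\in L^1(\Omega,\delta^{\gamma})$, so $h\in L^1(\Omega,\delta^{\gamma})$, and the bound $|g(v)|\le h$ is uniform over all admissible $v$; in particular $\mathcal{F}\subset L^1(\Omega,\delta^{\gamma})$.

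Next, since $\Omega$ is bounded and $\gamma\in(0,1]$, the measure $\nu:=\delta^{\gamma}dx$ is finite on $\Omega$, and $h\in L^1(\Omega,\nu)$. Therefore the set function $E\mapsto\int_E h\,d\nu$ is absolutely continuous with respect to $\nu$, i.e. for every $\varepsilon>0$ there exists $\eta>0$ such that $\int_E h\,\delta^{\gamma}dx<\varepsilon$ whenever $E\subset\Omega$ is measurable with $\int_E\delta^{\gamma}dx<\eta$. Combining this with the pointwise domination, for every $w=g(v)\in\mathcal{F}$ and every such $E$,
\begin{equation*}
\int_E|w|\,\delta^{\gamma}dx\le\int_E h\,\delta^{\gamma}dx<\varepsilon,
\end{equation*}
which is precisely the equi-integrability of $\mathcal{F}$.

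I do not expect any real obstacle: the argument is the standard uniform-integrability-via-domination reasoning, and the structural assumptions enter only through the monotonicity of $g$. If one prefers the equivalent formulation of equi-integrability in terms of $\sup_{w\in\mathcal{F}}\int_{\{|w|>\lambda\}}|w|\,\delta^{\gamma}dx\to 0$ as $\lambda\to\infty$, the same bound $|w|\le h$ together with the inclusion $\{|w|>\lambda\}\subset\{h>\lambda\}$ and the dominated convergence theorem yields the claim at once.
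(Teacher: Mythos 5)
Your proof is correct, but it takes a genuinely different and more elementary route than the paper. The paper follows the contradiction argument of Montenegro--Ponce: assuming failure of equi-integrability, it invokes their disjointification lemma to produce disjoint sets $F_k$ carrying mass at least $\varepsilon/2$ of $|g(u_{n_k})|\delta^{\gamma}$, and then glues the $u_{n_k}$ on the $F_k$ (with $v_1$ elsewhere) to build a single admissible $v$ with $\int_{\Omega}|g(v)|\delta^{\gamma}dx=+\infty$, a contradiction. That scheme is designed for the general sub-/supersolution setting, where the nonlinearity need not be monotone and one only knows that the Nemytskii map sends order intervals into $L^1$. You instead exploit the standing assumption of the section that $g$ is nondecreasing (and continuous, which gives measurability of $g(v)$): this yields the pointwise domination $|g(v)|\le |g(v_1)|+|g(v_2)|=:h\in L^1(\Omega,\delta^{\gamma})$ uniformly over $\mathcal F$, and equi-integrability follows at once from the absolute continuity of $E\mapsto\int_E h\,\delta^{\gamma}dx$. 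Your argument buys simplicity and, as a by-product, shows that the restriction $g(v)\in L^1(\Omega,\delta^{\gamma})$ in the definition of $\mathcal F$ is automatic (a fact the paper's contradiction implicitly relies on as well); the paper's heavier machinery would only become necessary if $g$ were a general, non-monotone Carath\'eodory nonlinearity. One cosmetic point: the paper measures smallness of sets by Lebesgue measure ($|E_n|\to0$), while you use $\int_E\delta^{\gamma}dx$ small; since $\delta^{\gamma}$ is bounded on $\Omega$, Lebesgue-small sets are $\delta^{\gamma}dx$-small, so your conclusion implies the version used in the paper.
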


\begin{proof} The proof follows that of \cite[Proposition 2.1]{MonPon_2008}. Here we provide it for the sake of convenience. Assume that $\mathcal{F}$ is not equi-integrable in $L^1(\Omega,\delta^{\gamma})$. Then there exist $\varepsilon > 0$, a sequence $\{ u_n\}_{n \in \mathbb{N}}$ such that $v_1 \le u_n \le v_2$ a.e. in $\Omega$, and a sequence of measurable subsets $\{ E_n\}_{n \in \mathbb{N}}$ of $\Omega$ such that
$$|E_n| \to 0 \text{ as } n \to \infty \quad \text{ and } \quad \int_{E_n} g(u_n) \delta^{\gamma} dx \ge \varepsilon, \quad \forall n \in \mathbb{N}.$$
By \cite[Lemma 2.1]{MonPon_2008} with $w_n:= g( u_n)\delta^{\gamma}/\varepsilon$,  we can choose a subsequence $\{u_{n_k}\}_{k \in \mathbb{N}}$ and a sequence of disjoint measurable sets $\{F_k\}_{k \in \mathbb{N}}$ such that
$$\int_{F_k} |g(u_{n_k})|\delta^{\gamma} dx \ge \dfrac{\varepsilon}{2},\quad \forall k \in \mathbb{N}.$$
Put
$$
v(x):= \va{ &u_{n_k}(x) &\text{ if } x\in F_k \text{ for some }k \ge 1,\\
&v_1(x) &\text{ otherwise.}}
$$
Then $v \in L^1(\Omega,\delta^{\gamma})$ and $v_1 \le v \le v_2$. In addition,
$$\int_{\Omega} |g(v)|\delta^{\gamma} dx \ge \sum_{k=1}^{\infty} \int_{\Omega} |g(u_{n_k})|\delta^{\gamma} dx = +\infty,$$
which is a contradiction.
\end{proof}

As a result,  by following the idea in \cite[Theorem 2.1]{MonPon_2008}, we obtain
\begin{proposition}\label{prop:continuousCara} Assume that $g(v) \in L^1(\Omega,\delta^{\gamma})$  for every $v\in L^1(\Omega,\delta^{\gamma})$.
Let
$\mathcal{A}$ be the operator defined by $\mathcal{A}v(x) = g(v(x))$ for $v \in L^1(\Omega,\delta^{\gamma})$ and $x \in \Omega$. Then $\mathcal{A}: L^1(\Omega,\delta^{\gamma}) \to L^1(\Omega,\delta^{\gamma})$  is continuous.
\end{proposition}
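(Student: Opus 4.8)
The plan is to prove that $\mathcal{A}$ is sequentially continuous on $L^1(\Omega,\delta^{\gamma})$, following the scheme of \cite[Theorem 2.1]{MonPon_2008} and using the preceding lemma as the crucial input. So let $\{v_n\}_{n \in \N} \subset L^1(\Omega,\delta^{\gamma})$ converge to $v$ in $L^1(\Omega,\delta^{\gamma})$; the goal is $g(v_n) \to g(v)$ in $L^1(\Omega,\delta^{\gamma})$. First I would invoke the standard reduction: it suffices to show that every subsequence of $\{g(v_n)\}_{n \in \N}$ admits a further subsequence converging to $g(v)$ in $L^1(\Omega,\delta^{\gamma})$, for then the whole sequence converges to the same limit.

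Fix such a subsequence, which I relabel $\{v_n\}_{n \in \N}$. Since $v_n \to v$ in $L^1(\Omega,\delta^{\gamma})$, after passing to a further subsequence I may assume $\sum_{n \ge 1}\norm{v_{n+1}-v_n}_{L^1(\Omega,\delta^{\gamma})} < +\infty$; then the partial sums converge absolutely a.e., so $v_n \to v$ a.e. in $\Omega$, and the function $h := |v_1| + \sum_{n \ge 1}|v_{n+1}-v_n|$ belongs to $L^1(\Omega,\delta^{\gamma})$ and satisfies $|v_n| \le h$ a.e. in $\Omega$ for every $n$. Since $g$ is continuous, it follows that $g(v_n) \to g(v)$ a.e. in $\Omega$.

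It remains to upgrade this pointwise convergence to convergence in $L^1(\Omega,\delta^{\gamma})$, and here I would use the preceding lemma. Since $-h, h \in L^1(\Omega,\delta^{\gamma})$, the hypothesis of the proposition gives $g(-h), g(h) \in L^1(\Omega,\delta^{\gamma})$, so the lemma applied to the ordered pair $-h \le h$ shows that the family $\{g(w) : w \in L^1(\Omega,\delta^{\gamma}),\ -h \le w \le h \text{ a.e. in }\Omega\}$ is equi-integrable in $L^1(\Omega,\delta^{\gamma})$; in particular $\{g(v_n)\}_{n \in \N}$ is equi-integrable. Because $\Omega$ is bounded and $\gamma \in (0,1]$, the weight $\delta^{\gamma}$ is bounded, so $\delta^{\gamma}\,dx$ is a finite measure on $\Omega$, and Vitali's convergence theorem applies: the a.e. convergence $g(v_n) \to g(v)$ together with equi-integrability yields $g(v_n) \to g(v)$ in $L^1(\Omega,\delta^{\gamma})$. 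This completes the subsequence extraction, hence the proof. I expect the only substantive step to be the equi-integrability assertion, which is precisely what the preceding lemma supplies; everything else is the routine continuity argument for superposition (Nemytskii) operators.
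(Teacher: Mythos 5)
Your proof is correct and follows essentially the argument the paper intends: the paper omits the details, citing \cite[Theorem 2.1]{MonPon_2008}, and that argument is exactly yours --- subsequence reduction, extraction of an a.e.\ convergent subsequence with an $L^1(\Omega,\delta^{\gamma})$ dominating function $h$, equi-integrability of $\{g(w): -h\le w\le h\}$ from the preceding lemma, and Vitali's theorem on the finite measure $\delta^{\gamma}\,dx$.
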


\begin{proof}[\sc Proof of Theorem \ref{theo:subsupersolution}] For any measurable function $v$, set
$$h(v(x)) := \va{ 
&g(\Gbb^{\Omega} [\mu^+](x)) &\text{ if }  v(x) > \Gbb^{\Omega}[\mu^+](x),\\
&g(v(x)) &\text{ if } -\Gbb^{\Omega}[\mu^-](x) \le v(x) \le \Gbb^{\Omega}[\mu^+](x),\\
&g(-\Gbb^{\Omega} [\mu^-](x)) &\text{ if } v(x) < -\Gbb^{\Omega} [\mu^-](x).}$$
By condition \eqref{eq:goodmeasure}, we have $h(v) \in L^1(\Omega,\delta^\gamma)$. Hence, the map $\mathcal{A}: L^1(\Omega,\delta^{\gamma}) \to L^1(\Omega,\delta^{\gamma})$  defined by $\mathcal{A}v(x) = h(v(x))$ is continuous by Proposition \ref{prop:continuousCara}. Furthermore,  $g(-\Gbb^{\Omega}[\mu^-]) \le h(v) \le g(\Gbb^{\Omega}[\mu^+]),\forall v \in L^1(\Omega,\delta^{\gamma})$.

\textbf{Step 1.} Assume that $\mu \in \M(\Omega,\delta^{\gamma})$. We prove that there exists a function $u \in L^1(\Omega,\delta^{\gamma})$ such that
$u + \Gbb^{\Omega}[h(u)] = \Gbb^{\Omega}[\mu]$.
Indeed, consider the operator $\mathbb{T} : L^1(\Omega,\delta^{\gamma}) \to L^1(\Omega,\delta^{\gamma})$ defined by 
$$\mathbb{T} u := \Gbb^{\Omega}\left[\mu - h( u)\right], \quad u \in L^1(\Omega,\delta^{\gamma})
$$ 
and the set 
$$\mathcal{C}:=\left\{ u \in L^1(\Omega,\delta^{\gamma}): \norm{u}_{L^1(\Omega,\delta^{\gamma})} \le M  \right\},$$ 
where 
$$M:= C_1\left(\norm{\mu}_{\M(\Omega,\delta^{\gamma})} +  \norm{g\left(\Gbb^{\Omega}[\mu^+]\right)}_{L^1(\Omega,\delta^{\gamma})} +  \norm{g\left(-\Gbb^{\Omega}[\mu^-]\right)}_{L^1(\Omega,\delta^{\gamma})} \right),$$ 
with $C_1=C_1(\Omega,N,s)$ being the constant in the estimate $\norm{\Gbb^{\Omega}[\mu]}_{L^1(\Omega,\delta^{\gamma})} \le C_1 \norm{\mu}_{\M(\Omega,\delta^{\gamma})}$.

Since $\Gbb^{\Omega}: \M(\Omega,\delta^{\gamma}) \to L^1(\Omega,\delta^{\gamma})$ is compact by Theorem \ref{theo:compactness} and the map $\mathcal{A}: L^1(\Omega,\delta^{\gamma}) \to L^1(\Omega,\delta^{\gamma})$ is continuous, it follows that $\mathbb{T}$ is compact. Furthermore,  $\mathcal{C}$ is closed, bounded, convex and $\mathbb{T}(\mathcal{C}) \subset \mathcal{C}$ since
\begin{align*}
\norm{\T u}_{L^1(\Omega,\delta^{\gamma})} 
= \norm{\Gbb^{\Omega}\left[\mu - h( u)\right]}_{L^1(\Omega,\delta^{\gamma})} 
\leq  C_1\left(\norm{\mu}_{\M(\Omega,\delta^{\gamma})} + \norm{h(u)}_{L^1(\Omega,\delta^{\gamma})}\right) \le  M,
\end{align*}
for every $u \in \mathcal{C}$. By Schauder's fixed point theorem, we conclude that there exists a function $u \in L^1(\Omega,\delta^{\gamma})$ such that
$u + \Gbb^{\Omega}[h(u)] = \Gbb^{\Omega}[\mu]$.

\textbf{Step 2.} We prove that \begin{equation} \label{Gmupm}-\Gbb^{\Omega}[\mu^-] \le u \le \Gbb^{\Omega}[\mu^+] \quad \text{a.e. in } \Omega.
\end{equation}	
Denote $v := u - \Gbb^{\Omega}[\mu^+]$. We have
$$v = \Gbb^{\Omega}[-h(u)] + \Gbb^{\Omega}[\mu - \mu^+].$$
By Theorem \ref{thm:kato} (ii), we have
$$\int_{\Omega} v^+ \xi dx \leq -\int_{\left\{u \geq \Gbb^{\Omega}[\mu^+]\right\}} h(u) \Gbb^{\Omega}[\xi]dx = -\int_{\left\{u \geq \Gbb^{\Omega}[\mu^+]\right\}}  g(\Gbb^{\Omega}[\mu^+]) \Gbb^{\Omega}[\xi]dx \leq 0,$$
for every $\xi \in \delta^{\gamma}L^{\infty}(\Omega)$ such that $\Gbb^{\Omega}[\xi] \ge 0$, which implies $v^+ = 0$. Hence, $u \le \Gbb^{\Omega}[\mu^+]$. Similarly, one has $u \ge -\Gbb^{\Omega}[\mu^-]$. Thus, $h(u) = g(u)$ and $u$ satisfies $u  + \Gbb^{\Omega}[g(u)] = \Gbb^{\Omega}[\mu]$,
i.e. $u$ is a solution of \eqref{eq:semilinear_absorption} and \eqref{Gmupm} holds.

Next, we prove estimate \eqref{est:apriori}. Employing Kato's inequality \eqref{eq:kato_abs2}, we obtain
$$ \int_{\Omega} |u|\xi dx + \int_{\Omega} \sgn(u) g(u) \Gbb^{\Omega}[\xi] dx \leq  \int_{\Omega} \Gbb^{\Omega}[\xi] d\mu^+, \quad \forall \xi \in \delta^\gamma L^\infty(\Omega).
$$
Taking $\xi =\delta^\gamma$ and noting that $\Gbb^\Omega[\delta^\gamma] \asymp \delta^\gamma$ (see \cite[Example 3.6]{Aba_2019}) and $\sgn(u)g(u) = |g(u)|$,we deduce \eqref{est:apriori}.

\textbf{Step 3.} We prove that the solution is unique and the map $\mu \mapsto u$ is increasing. Assume that $\mu_1,\mu_2 \in \M(\Omega,\delta^{\gamma})$ such that $\mu_1 \le \mu_2$ and let $u_1$ and $u_2$ be solutions to \eqref{eq:semilinear_absorption} with measure $\mu_1$ and $\mu_2$ respectively. Then we have
$$u_1 - u_2 + \Gbb^{\Omega}[g(u_1) - g(u_2)] = \Gbb^{\Omega}[\mu_1 - \mu_2].$$
Applying Kato's inequality \eqref{eq:kato_main2} for $f = g(u_2) - g(u_1)$ and $\mu = \mu_1 - \mu_2$, we have
$$\int_{\Omega}(u_1-u_2)^+ \xi dx + \int_{ \{ u_1 \ge u_2 \} } \left( g(u_1) - g(u_2)\right) \Gbb^{\Omega}[\xi] dx \le \int_{\Omega} \Gbb^{\Omega}[\xi] d(\mu_1 - \mu_2)^+ = 0$$
for any $\xi \in \delta^{\gamma}L^{\infty}(\Omega)$ such that $\Gbb^{\Omega}[\xi] \ge 0.$
Since $g$ is increasing, we derive
$\int_{\Omega}(u_1 - u_2)^+ \xi dx \le 0$
for every $\xi \in \delta^{\gamma}L^{\infty}(\Omega)$ such that $\Gbb^{\Omega}[\xi]\ge 0$. This implies that $u_1 \le u_2$. Thus the map $\mu \to u$ is increasing. The uniqueness for \eqref{eq:semilinear_absorption} follows straightforward. We complete the proof.
\end{proof}

As a consequence of the above result, we obtain the unique existence for $L^1$ data.

\begin{proof}[\sc Proof of Corollary \ref{cor:L1data}] First we assume that $f \in L^{\infty}(\Omega)$.  We have $\Gbb^{\Omega}[|f|] \in L^{\infty}(\Omega)$ (see \cite[Theorem 2.1]{Aba_2019}),  which implies $g(\Gbb^{\Omega}[f^+]) \in L^{\infty}(\Omega)$ and $g(-\Gbb^{\Omega}[f^-]) \in L^{\infty}(\Omega)$, in particular $g(-\Gbb^{\Omega}[f^-])$ and $g(\Gbb^{\Omega}[f^+])$ belong to $L^1(\Omega,\delta^{\gamma})$. By Theorem \ref{theo:subsupersolution},  there exists a unique weak-dual solution to \eqref{eq:semilinear_absorption}.

Now consider the case $f \in L^1(\Omega,\delta^{\gamma})$ and let $\{f_n\}_{n \in \mathbb{N}}$ be a sequence in $L^{\infty}(\Omega)$ that converges to $f$ in $L^1(\Omega,\delta^{\gamma})$. Denote by $u_n$ the unique solution of \eqref{eq:semilinear_absorption} with datum $f_n$.  Since
$$u_n - u_m + \Gbb^{\Omega}[g(u_n) - g(u_m)] = \Gbb^{\Omega}[f_n - f_m],\quad m,n \in \mathbb{N},$$
using Kato's inequality \eqref{eq:kato_abs} and the assumption that $g$ is nondecreasing, we obtain 
$$\int_{\Omega} |u_n - u_m| \xi dx + \int_{\Omega}|g(u_n) - g(u_m)|\Gbb^{\Omega}[\xi]dx \le \int_{\Omega}|f_n - f_m|\Gbb^{\Omega}[\xi] dx,$$ for every $\xi \in \delta^{\gamma} L^{\infty}(\Omega), \Gbb^{\Omega}[\xi]\ge 0$. Choosing $\xi = \delta^{\gamma}$ and noticing that $\Gbb^{\Omega}[\delta^{\gamma}] \asymp \delta^{\gamma}$ by \cite[example 3.6]{Aba_2019}, one has
\begin{align*}
\int_{\Omega} |u_n - u_m| \delta^{\gamma} dx + \int_{\Omega}|g(u_n) - g(u_m)| \delta^{\gamma} dx  \lesssim \int_{\Omega} |f_n - f_m|\delta^{\gamma} dx.
\end{align*}
Since $\{f_n\}_{n \in \mathbb{N}}$ is a Cauchy sequence, we infer from the above estimate that $\{u_n\}_{ n \in \mathbb{N}}$ and $\{ g(u_n) \}_{n \in \mathbb{N}}$ are also  Cauchy sequences. This implies that, up to a subsequence,
$u_n \to u$ and $g(u_n) \to g(u)$ in $L^1(\Omega,\delta^{\gamma})$. 
Hence, letting $n \to \infty$ in the formula $u_n + \Gbb^\Omega[g(u_n)]=\Gbb^\Omega[f_n]$, we conclude that 
$u + \Gbb^{\Omega}[g(u)] = \Gbb^{\Omega}[f]$, 
which means that $u$ is a solution to \eqref{eq:semilinear_absorption} with $\mu=f$.  Finally, the solution is unique by Kato's inequality. We complete the proof.
\end{proof}

Next we give a sharp condition on $g$ under which \eqref{eq:goodmeasure} is satisfied. Recall that $p^*=\frac{N+\gamma}{N+\gamma-2s}$.

\begin{corollary} \label{cor:measuredata-sub} Assume that \eqref{eq_L1}--\eqref{eq_L4} and \eqref{eq_G2}--\eqref{eq_G4} hold. Let $g: \R \to \R$ be a nondecreasing continuous function satisfying \eqref{sub-int} and $g(0)=0$. Then for any $\mu \in \M(\Omega,\delta^{\gamma})$, problem \eqref{eq:semilinear_absorption} admits a unique weak-dual solution $u$. Furthermore, the solution satisfies \eqref{Gmu+-} and the map $\mu \mapsto u$ is nondecreasing.
\end{corollary}

\begin{lemma} \label{lem:subint}
Let $g: \R \to \R$ be a nondecreasing continuous function with $g(0)=0$. Assume that $g$ satisfies the subcritical integrability condition \eqref{sub-int}. 
\begin{equation} \label{sub-int1} \int_1^{\infty} [g(t) - g(-t)]t^{-1-p^*}dt < +\infty.
\end{equation}
Then for any $\mu \in \M(\Omega,\delta^\gamma)$,
$$ g(\Gbb^\Omega[|\mu|]), g(-\Gbb^\Omega[|\mu|]) \in L^1(\Omega,\delta^\gamma).
$$ 
\end{lemma}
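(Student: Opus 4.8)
The plan is to combine the weak-type (Marcinkiewicz) continuity of $\Gbb^{\Omega}$ with a layer-cake representation and the subcritical condition \eqref{sub-int1}. Set $v:=\Gbb^{\Omega}[|\mu|]$; this function is nonnegative and finite a.e. in $\Omega$ since $|\mu|\in\M(\Omega,\delta^{\gamma})$. Applying Proposition \ref{prop:marcinestimate} with $\gamma'=\gamma$ and $\alpha=\gamma$ (condition \eqref{eq:condition_alpha_gamma} then reduces to $0<\gamma<\tfrac{\gamma N}{N-2s}$, which holds because $s>0$, and $p^*_{\gamma,\gamma}=p^*$) yields $v\in M^{p^*}(\Omega,\delta^{\gamma})$. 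Hence, writing $\omega(\sigma):=\int_{\{v>\sigma\}}\delta^{\gamma}\,dx$ for the weighted distribution function of $v$, there is a constant $C_{\mu}$ with $\omega(\sigma)\le C_{\mu}\sigma^{-p^*}$ for all $\sigma>0$, while trivially $\omega(\sigma)\le\int_{\Omega}\delta^{\gamma}\,dx=:|\Omega|_{\gamma}<+\infty$ for every $\sigma$.

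Next I would express the target norm through $\omega$. Since $g$ is continuous and nondecreasing with $g(0)=0$, its associated Lebesgue--Stieltjes measure $dg$ satisfies $g(t)=\int_{(0,t)}dg$ for every $t\ge0$, so $g(v(x))=\int_{0}^{\infty}\mathbf 1_{\{v(x)>\sigma\}}\,dg(\sigma)$, and Tonelli's theorem gives
\begin{equation*}
\int_{\Omega}g(v)\,\delta^{\gamma}\,dx=\int_{0}^{\infty}\omega(\sigma)\,dg(\sigma).
\end{equation*}
The part over $(0,1)$ is at most $|\Omega|_{\gamma}(g(1)-g(0))=|\Omega|_{\gamma}\,g(1)<+\infty$. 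For the part over $[1,\infty)$ I would decompose dyadically: on each $[2^{k},2^{k+1}]$ one has $\omega\le C_{\mu}2^{-kp^*}$, so this part is bounded by $C_{\mu}\sum_{k\ge0}2^{-kp^*}(g(2^{k+1})-g(2^{k}))\le C_{\mu}\sum_{k\ge0}2^{-kp^*}g(2^{k+1})$. By monotonicity of $g$, each term $2^{-kp^*}g(2^{k+1})$ is controlled by a fixed multiple of $\int_{2^{k+1}}^{2^{k+2}}g(t)\,t^{-1-p^*}\,dt$, so the series is dominated by $\int_{1}^{\infty}g(t)\,t^{-1-p^*}\,dt$, which is finite because $0\le g(t)\le g(t)-g(-t)$ for $t\ge0$ and \eqref{sub-int1} holds. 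This proves $g(\Gbb^{\Omega}[|\mu|])\in L^1(\Omega,\delta^{\gamma})$.

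For the statement about $g(-\Gbb^{\Omega}[|\mu|])$ I would apply the result just obtained to $\tilde g(t):=-g(-t)$, which is again continuous and nondecreasing, satisfies $\tilde g(0)=0$, and obeys $\tilde g(t)-\tilde g(-t)=g(t)-g(-t)$, hence fulfils \eqref{sub-int1}; then $\tilde g(v)\in L^1(\Omega,\delta^{\gamma})$ is precisely the claim $g(-\Gbb^{\Omega}[|\mu|])\in L^1(\Omega,\delta^{\gamma})$.

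The only delicate step is the passage from the weak-type bound $\omega(\sigma)\lesssim\sigma^{-p^*}$ to the $L^1$ estimate, namely the handling of the Stieltjes integral $\int_{0}^{\infty}\omega(\sigma)\,dg(\sigma)$ for a merely continuous nondecreasing $g$ and its comparison with the integral in \eqref{sub-int1}; everything else is a routine application of Proposition \ref{prop:marcinestimate} together with Tonelli's theorem and the fact that $\delta^{\gamma}\in L^1(\Omega)$. An equivalent route avoiding Stieltjes integrals is to begin from the layer-cake identity $\int_{\Omega}g(v)\,\delta^{\gamma}\,dx=\int_{0}^{\infty}\omega(g^{-1}(t))\,dt$ with $g^{-1}(t):=\inf\{\sigma\ge0:g(\sigma)>t\}$, split at $t=g(1)$, and change variables to reach the same comparison with \eqref{sub-int1}.
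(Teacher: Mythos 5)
Your argument is correct, and it shares the paper's two essential ingredients: the weak-type bound $\Gbb^{\Omega}[|\mu|]\in M^{p^*}(\Omega,\delta^{\gamma})$ coming from Proposition \ref{prop:marcinestimate} with $\gamma'=\alpha=\gamma$ (this is exactly the estimate the paper invokes via \eqref{eq:Marcin} and \eqref{Marcin-equi}), and the reduction of the second assertion to the first through $\tilde g(t)=-g(-t)$. Where you genuinely diverge is in how the distribution-function information is converted into an $L^1$ bound. The paper writes the weighted integral over $\{\Gbb^{\Omega}[\mu]>1\}$ as $-\int_1^{\infty}g(t)\,da(t)$ with $a(\lambda)=\int_{\{\Gbb^{\Omega}[\mu]>\lambda\}}\delta^{\gamma}dx$, performs a Stieltjes integration by parts to pass to $\int a(t)\,dg(t)$, bounds $a(t)\le t^{-p^*}\norm{\Gbb^{\Omega}[\mu]}_{M^{p^*}(\Omega,\delta^{\gamma})}$, integrates by parts a second time, and must extract from \eqref{sub-int1} a sequence $\lambda_n\to\infty$ with $\lambda_n^{-p^*}g(\lambda_n)\to 0$ in order to dispose of the boundary terms. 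You instead start directly from the layer-cake identity $\int_{\Omega}g(v)\delta^{\gamma}dx=\int_0^{\infty}\omega(\sigma)\,dg(\sigma)$ (the "other side" of that first integration by parts) and control the tail by a dyadic block comparison, $2^{-kp^*}\bigl(g(2^{k+1})-g(2^k)\bigr)\lesssim\int_{2^{k+1}}^{2^{k+2}}g(t)t^{-1-p^*}dt$, which is legitimate by monotonicity of $g$ and the observation $0\le g(t)\le g(t)-g(-t)$ for $t\ge 0$. This buys you a shorter and more robust argument: no boundary terms, no need for the subsequence $\{\lambda_n\}$, and no delicacy about improper Stieltjes integrals; the paper's route, on the other hand, produces the explicit quantitative bound with constant $p^*\norm{\Gbb^{\Omega}[\mu]}_{M^{p^*}(\Omega,\delta^{\gamma})}\int_1^{\infty}g(t)t^{-1-p^*}dt$, whereas yours carries an unspecified (but harmless) dyadic constant of order $4^{p^*}$. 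Your parenthetical claim that \eqref{eq:condition_alpha_gamma} "reduces to" $0<\gamma<\frac{\gamma N}{N-2s}$ is slightly loose (one must also observe that the three lower bounds are all below $\gamma$), but each of those inequalities is immediate, so no gap results.
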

\begin{proof} Let $\mu \in \M(\Omega,\delta^\gamma)$. It is sufficient to show that if $\mu \geq 0$ then $g(\Gbb^\Omega[\mu]) \in L^1(\Omega,\delta^\gamma)$.  For $\lambda>0$, set
$$ A_\lambda:=\{ x \in \Omega: \Gbb^\Omega[\mu](x) > \lambda \}, \quad a(\lambda) := \int_{A_\lambda}\delta^\gamma dx.
$$	
We write 
\begin{equation} \label{gG1-1} \begin{aligned}
\int_{\Omega}g(\Gbb^\Omega[\mu])\delta^\gamma dx &= \int_{\Omega \setminus A_1}g(\Gbb^\Omega[\mu])\delta^\gamma dx  +  \int_{A_1}g(\Gbb^\Omega[\mu])\delta^\gamma dx \\
&\leq g(1)\int_{\Omega}\delta^\gamma dx  - \int_1^{\infty} g(t)da(t).
\end{aligned} \end{equation}
For $\lambda_n \ge 1$, by integration by parts and estimate (which is deduced from \eqref{eq:Marcin} and the definition of Marcinkiewicz spaces and inequality \eqref{Marcin-equi})
\begin{equation} \label{atn} a(\lambda_n) \leq \lambda_n^{-p^*}\vertiii{ \Gbb^\Omega[\mu]}_{M^{p^*}(\Omega,\delta^\gamma)} \leq \lambda_n^{-p^*}\norm{ \Gbb^\Omega[\mu]}_{M^{p^*}(\Omega,\delta^\gamma)}.
\end{equation}
We read off from \eqref{sub-int1} that there exists a sequence $\{\lambda_n\}_{n \in \mathbb{N}}$ such that
\begin{equation} \label{tn} \lim_{\lambda_n \nearrow +\infty}\lambda_n^{-p^*}g(\lambda_n) = 0,
\end{equation}
which together with \eqref{atn} implies
$\lim_{\lambda_n \nearrow +\infty}a(\lambda_n)g(\lambda_n) = 0$.
Hence, one can write
\begin{align*}
- \int_1^{\lambda_n} g(t)da(t) &= - g(\lambda_n)a(\lambda_n) + g(1)a(1) + \int_{1}^{\lambda_n} a(t)dg(t) \\
&\leq - g(\lambda_n)a(\lambda_n) + g(1)a(1) + \norm{ \Gbb^\Omega[\mu]}_{M^{p^*}(\Omega,\delta^\gamma)}\int_{1}^{\lambda_n} t^{-p^*}dg(t) \\
&\leq - g(\lambda_n)a(\lambda_n) + g(1)a(1) + \norm{ \Gbb^\Omega[\mu]}_{M^{p^*}(\Omega,\delta^\gamma)}\int_{1}^{\lambda_n} t^{-p^*}dg(t) \\
&= \left(\norm{ \Gbb^\Omega[\mu]}_{M^{p^*}(\Omega,\delta^\gamma)}\lambda_n^{-p^*}- a(\lambda_n)\right)g(\lambda_n) +  \left(a(1)-\norm{ \Gbb^\Omega[\mu]}_{M^{p^*}(\Omega,\delta^\gamma)}\right)g(1) \\
&\quad +  p^*\norm{ \Gbb^\Omega[\mu]}_{M^{p^*}(\Omega,\delta^\gamma)} \int_1^{\lambda_n}g(t)t^{-p^*-1}dt. 
\end{align*}
Letting $\lambda_n \to +\infty$ and using \eqref{tn} and \eqref{atn}, we arrive at
$$ -\int_1^\infty g(t)da(t) \leq p^*\norm{ \Gbb^\Omega[\mu]}_{M^{p^*}(\Omega,\delta^\gamma)} \int_1^{\infty}g(t)t^{-p^*-1}dt.
$$ 
Plugging this into \eqref{gG1-1} and using \eqref{sub-int1}, we obtain
$$
\int_{\Omega}g(\Gbb^\Omega[\mu])\delta^\gamma dx   \leq g(1)\int_{\Omega}\delta^\gamma dx + p^*\norm{ \Gbb^\Omega[\mu]}_{M^{p^*}(\Omega,\delta^\gamma)} \int_1^{\infty}g(t)t^{-p^*-1}dt < +\infty,
$$
which means $g(\Gbb^\Omega[\mu]) \in L^1(\Omega,\delta^\gamma)$. 

In order to prove that $g(-\Gbb^\Omega[\mu]) \in L^1(\Omega,\delta^\gamma)$, we put $\tilde g(t) = -g(-t)$ and use the fact that $\tilde g(\Gbb^\Omega[\mu]) \in L^1(\Omega,\delta^\gamma)$. We complete the proof.
\end{proof}

\begin{proof}[{\sc Proof of Theorem \ref{measuredata-sub}}.]
This theorem follows from Theorem \ref{theo:subsupersolution} and Lemma \ref{lem:subint}.
\end{proof}

\section{Necessary and sufficient conditions for existence}\label{sec:sufficientcondition}
In this subsection, we assume that \eqref{eq_L1}--\eqref{eq_L4},  \eqref{eq_G2}--\eqref{eq_G4} hold.
In the sequel, for any measure in $\M(\Omega)$, we use the same notation to denote its extension by zero outside of $\Omega$. 
Let $R=\textrm{diam}(\Omega)+1$.  For $\alpha \in (0,\frac{N}{2})$, we define the truncated Riesz potential of a measure $\nu \in \M_+(\Omega)$ by
$$ \Ibb_{\alpha}^R[\nu](x):=\int_0^R \frac{\nu(B_t(x))}{t^{N-\alpha}}\frac{dt}{t}.
$$
The next lemma provides upper bounds of the Green operator in terms of the Riesz potential.
\begin{lemma}
Let $\mu \in \M_+(\Omega,\delta^\gamma)$,  $d\nu = \delta^\gamma d\mu$ and $p \in (1,\infty)$ such that 
$s-\frac{\gamma}{2p'}>0$.

(i) There exists a positive constant $C=C(N,\Omega,s,\gamma,p)$ such that
\begin{equation} \label{est:GW-1}
	\Gbb^{\Omega}[\mu](x) \leq C\, \delta(x)^{-\frac{\gamma}{p}} \Ibb_{2s-\frac{\gamma}{p'}}^{2R}[\nu](x) \quad \text{for } x \in \Omega.
\end{equation}

(ii) There exists a positive constant $C=C(N,\Omega,s,\gamma,p)$ such that
\begin{equation} \label{est:GW-2}
	\| \Gbb^{\Omega}[\mu]\|_{L^p(\Omega,\delta^\gamma)} \leq C \| \Ibb_{2s-\frac{\gamma}{p'}}^{2R}[\nu]\|_{L^p(\Omega)}. 
\end{equation}
\end{lemma}
\begin{proof}
(i) From the two-sided estimate of $G^\Omega$ in \eqref{G-est}, we obtain
$$
G^{\Omega}(x,y) \lesssim \delta(x)^{-\frac{\gamma}{p}}\delta(y)^{\gamma} |x-y|^{-(N-2s+\frac{\gamma}{p'})}, \quad x,y \in \Omega, \, x \neq y.
$$	
By using condition $s-\frac{\gamma}{2p'}>0$, the definition of the {\color{darkblue}Riesz potential} and Fubini's theorem, we infer from the above estimate that 
\begin{align*}
\Gbb^{\Omega}[\mu](x) &\lesssim \delta(x)^{-\frac{\gamma}{p}} \int_{\Omega} \delta(y)^{\gamma}(N-2s+ \frac{\gamma}{p'}) \int_{|x-y|}^\infty t^{-(N-2s+\frac{\gamma}{p'}+1)}	dt d\mu(y) \\
&\lesssim \delta(x)^{-\frac{\gamma}{p}} \int_{\Omega}   \int_0^\infty \1_{B_t(x)}(y) t^{-(N-2s+ \frac{\gamma}{p'}+1)}	dt \delta(y)^\gamma d\mu(y) \\
&\lesssim \delta(x)^{-\frac{\gamma}{p}} \int_0^\infty t^{-(N-2s+ \frac{\gamma}{p'}+1)} \int_{\Omega} \1_{B_t(x)}(y)  d\nu(y)	dt \\
&\lesssim \delta(x)^{-\frac{\gamma}{p}} \int_0^R \frac{\nu(B_t(x))}{t^{N-2s+ \frac{\gamma}{p'}}}\frac{dt}{t} + \delta(x)^{-\frac{\gamma}{p}} \nu(\Omega) \int_R^\infty t^{-(N-2s+ \frac{\gamma}{p'}+1)}dt \\
&\lesssim \delta(x)^{-\frac{\gamma}{p}} \int_0^R \frac{\nu(B_t(x))}{t^{N-2s+ \frac{\gamma}{p'}}}\frac{dt}{t} + \delta(x)^{-\frac{\gamma}{p}} \int_R^{2R} \frac{\nu(B_t(x))}{t^{N-2s+ \frac{\gamma}{p'}}}\frac{dt}{t} \\
& \asymp \delta(x)^{-\frac{\gamma}{p}} \int_0^{2R} \frac{\nu(B_t(x))}{t^{N-2s+ \frac{\gamma}{p'}}}\frac{dt}{t}.
\end{align*}
Therefore, we obtain \eqref{est:GW-1}.

(ii) From \eqref{est:GW-1}, we derive
\begin{align*}
\int_{\Omega} \delta(x)^\gamma \Gbb^{\Omega}[\mu](x)^p dx \lesssim \int_{\Omega} (\Ibb_{2s-\frac{\gamma}{p'}}^{2R}[\nu](x))^p dx, 	
\end{align*}
which implies \eqref{est:GW-2}.
\end{proof}

Before proving Theorem \ref{thm:supercritical}, we recall the definition of 
Bessel potential spaces and Bessel capacities. For $\alpha\in\R$ we define the Bessel kernel of order $\alpha$ in $\R^d$ by $\CB_{\alpha}(\xi):=\CF^{-1}((1+|\cdot|^2)^{-\frac{\alpha}{2}})(\xi)$,  where $\CF$ is the Fourier transform in the space $\CS'(\R^N)$ of moderate distributions in $\R^N$. For $\alpha \in \R$ and $q>1$, the Bessel potential space of order $\alpha$ and power is defined by
$$ L^{\alpha,q}(\R^N):=\{f=\CB_{\alpha} \ast g:g\in L^{q}(\R^N)\},
$$
endowed with the norm
$$ \|f\|_{L^{\alpha,q}(\R^N)}:=\|g\|_{L^q(\R^N)}=\|\CB_{-\alpha}\ast f\|_{L^p(\R^N)}.
$$
For $\alpha>0$ and $q \in (1,\infty)$, we denote by $L^{-\alpha, q'}(\R^N)$ the dual space of $L^{\alpha,q}(\R^N)$.  Also, for $E \subset \R^N$, we denote
$$
\CS_E:=\{g \in L^q(\R^N): g \geq 0, \, \CB_{\alpha} \ast g \geq \1_E\}.
$$
The Bessel capacity of a compact set $E$ is defined by
$$ \mathrm{Cap}_{\alpha,q}(E):=\inf\{\|g\|^q_{L^{\alpha, q}}: g \in \CS_E \},
$$
while the Bessel capacity of open sets and arbitrary sets can be defined in the standard way. We note that if $\CS_E = \varnothing$ then we set $\mathrm{Cap}_{\alpha,q}(E)=\infty$.

For a Radon measure $\nu$ on $\R^N$, put
$$
\Bb_\alpha[\nu](x) := \int_{\R^N}\CB_{\alpha}(x-y)d\nu(y), \quad x \in \R^N.
$$
The Bessel capacities can be also defined in a dual way involving measures as
$$ \mathrm{Cap}_{\alpha,q}(E)=\sup\left\{ \left( \frac{\nu(E)}{\| \Bb_\alpha[\nu] \|_{L^{q'}(\R^N)}} \right)^q: \nu \in \M^+(E) \right \}.
$$
For further properties of the Bessel potential spaces and the Bessel capacities, we refer to \cite[subsections 1.2.3, 1.2.4, 2.2, 2.5, 2.6]{Ada_1996} and \cite[subsection 3.3.1]{Ver-handbook}.

\begin{proof}[\sc Proof of Theorem \ref{thm:supercritical}] ~
	
\textbf{Step 1.} Construction of approximate solutions.	
	
We first observe that since $\nu$ is absolutely continuous with respect to the Bessel capacity $\mathrm{Cap}_{2s-\frac{\gamma}{p'},p'}$, the measure $\nu^+$ and $\nu^-$ inherit the same property.  Therefore, by \cite[Proposition 3.17]{Ver-handbook}, there exist nondecreasing sequences $\{ \nu_{i,n} \}_{n \in \N} \subset L^{-(2s-\frac{\gamma}{p'}),p}(\R^N) \cap \M^+(\Omega)$, $i=1,2$, with compact supports in $\Omega$ such that $\{\nu_{1,n}\}_{n \in \N}$ and  $\{\nu_{2,n}\}_{n \in \N}$ converge to $\nu^+$ and $\nu^-$ {\color{xanhle} in the weak$^*$ sense}. Moreover, by \cite[Theorem 2.3]{BVHV},
\begin{equation} \label{est:WB}
\| \Ibb_{2s-\frac{\gamma}{p'}}^{2R}[\nu_{i,n}]\|_{L^p(\R^N)} \asymp \| \Bb_{2s-\frac{\gamma}{p'}}[\nu_{i,n}] \|_{L^p(\R^N)}.
\end{equation}
Put $\mu_{i,n}=\delta^{-\gamma}\nu_{i,n}$ then $\mu_{i,n} \in \M(\Omega,\delta^\gamma)$. Combining \eqref{est:GW-2} and \eqref{est:WB} yields
$$ \| \Gbb^{\Omega}[\mu_{i,n}]\|_{L^p(\Omega,\delta^\gamma)} \lesssim \|\Bb_{2s-\frac{\gamma}{p'}}[\nu_{i,n}] \|_{L^p(\R^N)}  < +\infty.
$$ 
By Theorem \ref{theo:subsupersolution}, there exist unique weak-dual solutions $u_{1,n}$, $u_{2,n}$ and $u_{n}$ to the following problems respectively
\begin{equation}\label{eq:mu1n_1}
	\left\{ \begin{aligned}
		\L u + |u|^{p-1}u &= \mu_{1,n} &&\text{ in }\Omega, \\
		u  &= 0 &&\text{ on }\partial \Omega \text{ or in } \R^N \backslash \Omega \text{ (if applicable)},
	\end{aligned} \right.
\end{equation}
\begin{equation}\label{eq:mu1n_2}
	\left\{ \begin{aligned}
		\L u + |u|^{p-1}u &= \mu_{2,n} &&\text{ in }\Omega, \\
		u  &= 0 &&\text{ on }\partial \Omega \text{ or in } \R^N \backslash \Omega \text{ (if applicable)},
	\end{aligned} \right.
\end{equation}
\begin{equation}\label{eq:mu1n}
	\left\{ \begin{aligned}
		\L u + |u|^{p-1}u &= \mu_{1,n} - \mu_{2,n} &&\text{ in }\Omega, \\
		u  &= 0 &&\text{ on }\partial \Omega \text{ or in } \R^N \backslash \Omega \text{ (if applicable)}.
	\end{aligned} \right.
\end{equation}
In other words, $u_{i,n}, u_n \in L^1(\Omega,\delta^\gamma)$ satisfy the following weak-dual formulations respectively
\begin{equation} \label{eq:u_in} \int_{\Omega} u_{i,n}\xi dx + \int_{\Omega} |u_{i,n}|^{p-1}u_{i,n}\Gbb^{\Omega}[\xi] dx = \int_{\Omega} \Gbb^{\Omega}[\xi] d\mu_{i,n},\quad \forall \xi \in \delta^{\gamma}L^{\infty}(\Omega).
\end{equation} 
\begin{equation} \label{eq:u_n} \int_{\Omega} u_n\xi dx + \int_{\Omega} |u_{n}|^{p-1}u_{n}\Gbb^{\Omega}[\xi] dx = \int_{\Omega} \Gbb^{\Omega}[\xi] d\mu_{n},\quad \forall \xi \in \delta^{\gamma}L^{\infty}(\Omega).
\end{equation} 

\textbf{Step 2.} Convergence procedure.

By the monotonicity and positivity of the sequences $\{\mu_{i,n}\}_{n \in \mathbb{N}}$ (by the properties of $\{\nu_{i,n}\}_{n \in \mathbb{N}}$),  we deduce from Theorem \ref{theo:subsupersolution} that $u_{i,n} \geq 0$ in $\Omega$, $\{u_{i,n}\}_{n \in \N}$, $i=1,2$, are increasing sequences and 
\begin{equation} \label{order}
- \Gbb^{\Omega}[\mu^-] \leq - \Gbb^\Omega[\mu_{2,n}] \leq -u_{2,n} \leq u_n \leq u_{1,n} \leq \Gbb^\Omega[\mu_{1,n}] \leq \Gbb^\Omega[\mu_{+}] \quad \text{in } \Omega.	
\end{equation}	
Moreover, by \eqref{est:apriori}, 
\begin{equation} \label{est:apriori-u1} \| u_{1,n} \|_{L^1(\Omega,\delta^\gamma)} +  \| u_{1,n}  \|_{L^p(\Omega,\delta^\gamma)} \leq C\| \mu_{1,n} \|_{\M(\Omega,\delta^\gamma)} \leq C \| \mu^+ \|_{\M(\Omega,\delta^\gamma)}, 
\end{equation}
\begin{equation} \label{est:apriori-u2} \| u_{2,n} \|_{L^1(\Omega,\delta^\gamma)} +  \|u_{2,n}  \|_{L^p(\Omega,\delta^\gamma)} \leq C\| \mu_{2,n} \|_{\M(\Omega,\delta^\gamma)} \leq C \| \mu^- \|_{\M(\Omega,\delta^\gamma)}. 
\end{equation}
Thus the sequences $\{u_{i,n}\}_{n \in \mathbb{N}}$, $i=1,2$, are increasing and uniformly bounded in $L^1(\Omega,\delta^\gamma)$ as well as in $L^p(\Omega,\delta^\gamma)$. By the monotonicity convergence theorem, $u_{1,n} \to u^1$ and $u_{2,n} \to u^2$ in $L^1(\Omega,\delta^\gamma)$ and  $L^p(\Omega,\delta^\gamma)$ for some $u^1, u^2 \in L^p(\Omega,\delta^{\gamma})$.  {\color{xanhle} Take $\xi \in \delta^\gamma L^\infty(\Omega)$, then $\xi \in L^r(\Omega,\delta^\gamma)$ for some $r>\frac{N+\gamma}{2s}$ and consequently $\Gbb^{\Omega}[\xi] \in C(\overline{\Omega})$ due to Proposition \ref{compact-large}.  Also,  we have $\Gbb^{\Omega}[\xi] \in \delta^{\gamma}L^{\infty}(\Omega)$,  see for instance \cite[Proposition 3.5]{ChaGomVaz_2019_2020}. It follows that $\delta^{-\gamma}\Gbb^{\Omega}[\xi] \in C(\overline{\Omega})$.} Since $\{\nu_{1,n}\}_{n \in \mathbb{N}}$ converges to $\nu^+$ in the weak$^*$ topology of $\M(\Omega)$, we have
\begin{align*}
\lim_{n \to \infty} \int_{\Omega} \Gbb^{\Omega}[\xi] d\mu_{1,n} = \lim_{n \to \infty} \int_{\Omega} \delta^{-\gamma} \Gbb^{\Omega}[\xi] d\nu_{1,n} = \int_{\Omega} \delta^{-\gamma} \Gbb^{\Omega}[\xi] d\nu_{+} = 	\int_{\Omega} \Gbb^{\Omega}[\xi] d\mu_{+}.
\end{align*}

Therefore, letting $n \to \infty$ in \eqref{eq:u_in} leads to
\begin{equation} \label{eq:u_1n} \int_{\Omega} u^1\xi dx + \int_{\Omega} (u^1)^{p}\Gbb^{\Omega}[\xi] dx = \int_{\Omega} \Gbb^{\Omega}[\xi] d\mu_{+},\quad \forall \xi \in \delta^{\gamma}L^{\infty}(\Omega).
\end{equation} 
Thus $u^1$ is the (unique) weak-dual solution of \eqref{eq:mu1n} with $\mu_{1,n}$ replaced by $\mu^+$.  Similarly,  $u^2$ is the (unique) weak-dual solution of \eqref{eq:mu1n} with $\mu_{2,n}$ replaced by $\mu^-$.

Next we will show that $\{u_n\}_{n \in \N}$ is a convergent sequence.  Indeed,  it follows from \eqref{order} that $ |u_n| \leq \max\{u_{1,n},u_{2,n}\}$.
This and \eqref{est:apriori-u1}, \eqref{est:apriori-u2} imply in that $\{u_n\}_{n \in \mathbb{N}}$ is uniformly bounded in $L^p(\Omega,\delta^\gamma)$. In addition, one can write
$$
u_n = \Gbb^{\Omega}[\mu_{1,n} - \mu_{2,n} - |u_n|^{p-1}u_n]
$$
Since $\{\mu_{i,n}\}_{n \in \mathbb{N}}$, $i=1,2$, are uniformly bounded in $\M(\Omega,\delta^\gamma)$ and $|u_n|^p$ is uniformly bounded in $L^1(\Omega,\delta^\gamma)$, by using the compactness property of $\Gbb^{\Omega}$ form $\M(\Omega,\delta^\gamma)$ into $L^1(\Omega,\delta^\gamma)$ (see Theorem \ref{theo:compactness}), we derive that, up to a subsequence, $u_n \to u$ in $L^1(\Omega,\delta^\gamma)$ and a.e. in $\Omega$. By the dominated convergence theorem, $u_n \to u$ in $L^1(\Omega,\delta^\gamma)$ and in $L^p(\Omega,\delta^\gamma)$. Passing $n \to \infty$ in \eqref{eq:u_n} yields
\begin{equation} \label{weakform-up}
\int_{\Omega} u \xi dx + \int_{\Omega} |u|^{p-1}u\Gbb^{\Omega}[\xi] dx = \int_{\Omega} \Gbb^{\Omega}[\xi] d\mu,\quad \forall \xi \in \delta^{\gamma}L^{\infty}(\Omega).
\end{equation}
Thus $u$ is a weak-dual solution to problem \eqref{eq:up}. The uniqueness follows from Kato's inequality (see Theorem \ref{thm:kato}). Estimates \eqref{Gmu+-} follow from \eqref{order}. The proof is complete.
\end{proof} \medskip

\noindent \textbf{Acknowledgments.} The work of P.-T. Huynh was supported by the Austrian Science Fund FWF under the grant DOC 78.  He gratefully thanks Prof.  Jan Slov\'{a}k for the kind hospitality during his stay at Masaryk University. P.-T. Nguyen was supported by Czech Science Foundation, Project GA22-17403S. Part of the work was carried out during the visit of P.-T. Nguyen at the University of Klagenfurt. P-T. Nguyen gratefully acknowledges the University of Klagenfurt for the hospitality.The authors would like to thank the anonymous referee for the comments which helped to improve the paper.

\appendix

\section{Proof of some results}\label{appendix:proof_of_lemma}

In this appendix,  we give the proofs of  technical results used to prove the main results.

\begin{proof}[\sc Proof of Lemma \ref{lem:technical}]\text{}

(i) Assume that \eqref{eq_G2}--\eqref{eq_G3bis} hold.  We first show that $G^{\varepsilon}_{\beta}$ is continuous in $\Omega \times \Omega$. Indeed,  by \eqref{eq_G3bis} $G^{\Omega}$ is continuous in $(\Omega \times \Omega)\backslash D_{\Omega}$.  In addition, the map $(x,y) \mapsto K^{\varepsilon}_{\beta}(|x-y|)$ is continuous since $K^{\varepsilon}_{\beta}$ is continuous on $\mathbb{R}^+$. Thus,  $G^{\varepsilon}_{\beta}$ is continuous in $(\Omega \times \Omega) \setminus D_{\Omega}$.  We prove that $G^{\varepsilon}_{\beta}$ is continuous on the diagonal $D_{\Omega}$.  For $x,y \in \Omega$ such that $0 < |x-y| < \varepsilon$,  by \eqref{eq:Green0} one has
\begin{equation}\label{eq:limitGr}
\begin{aligned}
G^{\varepsilon}_{\beta}(x,y) = G^{\Omega}(x,y) K^{\varepsilon}_{\beta}(|x-y|) \lesssim \dfrac{1}{|x-y|^{N-2s}}\cdot \dfrac{|x-y|^{\beta}}{\varepsilon^{\beta}} \leq \dfrac{|x-y|^{\beta - (N-2s)}}{\varepsilon^{\beta}}.
\end{aligned}
\end{equation}
For $(x_0,x_0) \in D_\Omega$, combining \eqref{eq:limitGr} and the fact that $\beta > N-2s$, we have
$$\lim_{(x,y) \to (x_0,x_0)}G^{\varepsilon}_{\beta}(x,y) = 0 = G^{\varepsilon}_{\beta}(x_0,x_0).$$
Hence, $G^{\varepsilon}_{\beta}$ is continuous at $(x_0,x_0) \in D_\Omega$.  This implies $G^{\varepsilon}_{\beta}$ is continuous  on $\Omega \times \Omega$. 

We prove the uniform continuity of the map $(x,y) \mapsto G^{\varepsilon}_{\beta}(x,y)/\delta(y)^{\gamma'}$ for $\gamma' < \gamma$. Let $\Psi_{\gamma'}$ be the extension of $G_{\beta}^{\varepsilon}/{\delta^{\gamma'}}$ on $\overline{\Omega} \times \overline{\Omega}$ defined by
$$
\Psi_{\gamma'}(x,y): = \left\{  \begin{aligned}
&\dfrac{G^{\varepsilon}_{\beta}(x,y)}{\delta(y)^{\gamma'}} \quad &&\text{if } (x,y) \in \Omega \times \Omega, \\
&0 \quad &&\text{elsewhere}.
\end{aligned} \right.
$$

By the continuity of $G^{\varepsilon}_{\beta}$ in $\Omega \times \Omega$,  $\Psi_{\gamma'}$ is continuous in $\Omega \times \Omega$. We prove that $\Psi_{\gamma'}$ is continuous on the boundary. By \eqref{eq:Green0} and \eqref{Keb}, for $(x,y) \in \Omega \times \Omega$, we derive
\begin{equation}\label{eq:Green_bound1}
\begin{aligned}
\dfrac{G^{\varepsilon}_{\beta}(x,y)}{\delta(y)^{\gamma'}} 
= \dfrac{G^{\Omega}(x,y)}{\delta(y)^{\gamma'}}K^{\varepsilon}_{\beta}(|x-y|) \lesssim \dfrac{\delta(x)^\gamma \delta(y)^{\gamma - \gamma'}}{|x-y|^{N+2\gamma-2s}}\cdot \left(1 \wedge \dfrac{|x-y|^{\beta}}{\varepsilon^{\beta}}\right)
\lesssim \dfrac{\delta(x)^\gamma\delta(y)^{\gamma - \gamma'}}{\varepsilon^{N+ 2\gamma - 2s}}.
\end{aligned}
\end{equation}
Here we have used the fact that $\beta > N + 2\gamma - 2s$ and the elementary inequality 
\begin{equation} \label{1tab} 
(1 \wedge t^a) \le t^b,  \quad t \ge 0, b \leq a.
\end{equation}
Therefore, since $\gamma>\gamma'$,  we conclude that for any $(x_0,y_0) \in \partial \Omega \times \overline{\Omega}$ or $(x_0,y_0) \in \overline{\Omega} \times \partial \Omega$,
$$ \lim_{\substack{(x,y) \to (x_0,y_0)\\ (x,y) \in \Omega \times \Omega} } \Psi_{\gamma'}(x,y) =  \lim_{\substack{(x,y) \to (x_0,y_0)\\ (x,y) \in \Omega \times \Omega}} \dfrac{G^{\varepsilon}_{\beta}(x,y)}{\delta(y)^{\gamma'}} = 0 = \Psi_{\gamma'}(x_0,y_0).
$$
Thus, $\Psi_{\gamma'}$ is continuous on $\overline \Omega \times \overline \Omega$. It follows that $G^{\varepsilon}_{\beta}/\delta^{\gamma'}$ is uniformly continuous on $\Omega \times  \Omega$.

(ii) Assume, in addition, that \eqref{eq_G4} holds.  We find that the map
\begin{equation} \label{eq:bd-a} (x,y) \mapsto \dfrac{G^{\varepsilon}_{\beta}(x,y)}{\delta(y)^{\gamma}} \quad \text{is continuous on } \Omega \times \Omega.
\end{equation} 
Also by using the same argument leading to \eqref{eq:Green_bound1}, we obtain
$$
\dfrac{G^{\varepsilon}_{\beta}(x,y)}{\delta(y)^{\gamma}} \lesssim \dfrac{\delta(x)^\gamma}{\varepsilon^{N+ 2\gamma - 2s}}, \quad \forall (x,y) \in \Omega \times \Omega.
$$
Therefore, for any $x_0 \in \partial \Omega$ and $y_0 \in \overline \Omega$,
\begin{equation} \label{eq:bdryx} \lim_{(x,y) \to (x_0,y_0)}\dfrac{G^{\varepsilon}_{\beta}(x,y)}{\delta(y)^{\gamma}} = 0.
\end{equation}
Next, by assumption \eqref{eq_G4}, for $x_0 \in \Omega$ and $z_0 \in \partial \Omega$, we have
\begin{equation}\label{eq:boundary_limit_2}
	\lim_{(x,y) \to (x_0,z_0)} \dfrac{G^{\varepsilon}_{\beta}(x,y)}{\delta(y)^{\gamma}} = \lim_{(x,y) \to (x_0,z_0)} \dfrac{G^{\Omega}(x,y)}{\delta(y)^{\gamma}} K^{\varepsilon}_{\beta}(|x-y|) = M^{\Omega}(x_0,z_0)K^{\varepsilon}_{\beta}(|x_0-z_0|).
\end{equation}
Note that the function on the right hand side of \eqref{eq:boundary_limit_2} is continuous in $\Omega \times \partial \Omega$. 
Furthermore, by \eqref{M-est}, \eqref{Keb} (with $\beta > N - 2s + 2\gamma$) and \eqref{1tab}, we obtain
\begin{align*} M^{\Omega}(x_0,z_0)K^{\varepsilon}_{\beta}(|x_0-z_0|) \lesssim \frac{\delta(x_0)^\gamma}{|x_0-z_0|^{N+2\gamma-2s}} \left( 1 \wedge \frac{|x_0-z_0|^\beta}{\varepsilon^{\beta}} \right) \lesssim \frac{\delta(x_0)^\gamma}{\varepsilon^{N+2\gamma-2s}},
\end{align*}
which implies
\begin{equation} \label{bdry-bdry} \lim_{x_0 \to \partial \Omega} M^{\Omega}(x_0,z_0)K^{\varepsilon}_{\beta}(|x_0-z_0|) = 0. 
\end{equation}
Let $\Psi_{\gamma}$ be the extension of $G^{\varepsilon}_{\beta}/\delta^{\gamma}$ on $\overline \Omega \times \overline \Omega$ defined by
$$
\Psi_{\gamma}(x,y): = \left\{  \begin{aligned}
&\dfrac{G^{\varepsilon}_{\beta}(x,y)}{\delta(y)^{\gamma}} \quad &&\text{if } (x,y) \in \Omega \times \Omega, \\
&M^{\Omega}(x,y)K_{\beta}^{\varepsilon}(|x-y|) &&\text{if } (x,y) \in \Omega \times \partial \Omega,\\
&0 &&\text{elsewhere.}
\end{aligned} \right.
$$
We infer from \eqref{eq:bd-a}--\eqref{bdry-bdry} that $\Psi_{\gamma}$ is continuous on $\overline \Omega \times \overline \Omega$. Therefore, $\Psi_{\gamma}$ is uniformly continuous on $\overline \Omega \times \overline \Omega$. Consequently, 
$G^{\varepsilon}_{\beta}/\delta^{\gamma}$ is uniformly continuous in $\Omega \times \Omega$.  The proof is complete.
\end{proof}

\section{Boundary singularities} \label{sec:boundarysolution}

This section is devoted to a short discussion on the singularities of solutions of \eqref{eq:semilinear_absorption}.  We assume that \eqref{eq_L1}--\eqref{eq_L4} and \eqref{eq_G2}--\eqref{eq_G4} hold and we will show that any isolated boundary singularity can be obtained as the limit of isolated interior singularities. Recall that the Martin kernel $M^{\Omega}: \Omega \times \partial \Omega \to \R$ is given by
$$  M^{\Omega}(x,z) =\lim_{\Omega \ni y \to z}\frac{G^\Omega(x,y)}{\delta(y)^\gamma}, \quad x \in \Omega, z \in \partial \Omega.
$$
Recall also that $p^*=\frac{N+\gamma}{N+\gamma-2s}$.

\begin{proposition}
	Assume that $1 < p < p^*$. Then for any $z \in \partial \Omega$, 
	\begin{equation} \label{GMp}
		\lim_{\Omega \ni  x \to z}\frac{\Gbb^\Omega[M^\Omega(\cdot,z)^p](x)}{M^\Omega(x,z)} = 0. 
	\end{equation} 	
\end{proposition}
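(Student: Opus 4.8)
The plan is to reduce the statement to the two–sided kernel bounds \eqref{G-est}, \eqref{eq:Green0} and \eqref{M-est} together with a decomposition of $\Omega$ adapted to the scale $\rho:=|x-z|$. Since $M^\Omega(x,z)\sim \delta(x)^\gamma|x-z|^{-(N-2s+2\gamma)}$ and $\delta(x)\le\rho$, proving \eqref{GMp} amounts to showing
$$\int_\Omega G^\Omega(x,y)\,M^\Omega(y,z)^p\,dy = o\!\left(\delta(x)^\gamma\rho^{-(N-2s+2\gamma)}\right)\quad\text{as }\rho\to0.$$
A preliminary fact, used repeatedly, is that $M^\Omega(\cdot,z)^p\in L^1(\Omega,\delta^\gamma)$: by \eqref{M-est} one has $M^\Omega(y,z)^p\delta(y)^\gamma\lesssim \delta(y)^{\gamma(p+1)}|y-z|^{-(N-2s+2\gamma)p}$, and a routine computation in boundary–fitted coordinates near $z$ shows the resulting integral is finite exactly when $(N-2s+\gamma)p<N+\gamma$, i.e. when $p<p^*$. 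In particular $\Gbb^\Omega[M^\Omega(\cdot,z)^p]$ is well defined.

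Next I would split $\Omega=A\cup B\cup C$ with $A:=\{\,|y-z|<\rho/2\,\}$ (near the singularity), $B:=\{\,|x-y|<\rho/2\,\}$ (near the evaluation point; note $B\subset\{|y-z|>\rho/2\}$), and $C:=\{\,|y-z|\ge\rho/2,\ |x-y|\ge\rho/2\,\}$. The region $A$ is handled with no exponent bookkeeping: there $|x-y|\sim\rho$, so the fourth bound in \eqref{eq:Green0} gives $G^\Omega(x,y)\lesssim\delta(x)^\gamma\delta(y)^\gamma\rho^{-(N-2s+2\gamma)}\sim M^\Omega(x,z)\,\delta(y)^\gamma$, whence
$$\frac{1}{M^\Omega(x,z)}\int_A G^\Omega(x,y)\,M^\Omega(y,z)^p\,dy \lesssim \int_{\{|y-z|<\rho/2\}}\delta(y)^\gamma M^\Omega(y,z)^p\,dy,$$
which tends to $0$ as $\rho\to0$ by absolute continuity of the integral of the $L^1(\Omega,\delta^\gamma)$ function $M^\Omega(\cdot,z)^p$.

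On $B$ and $C$ the point $y$ stays away from $z$, so $M^\Omega(y,z)^p$ can be replaced by the explicit power $\delta(y)^{\gamma p}|y-z|^{-(N-2s+2\gamma)p}$ (using $|y-z|\sim\rho$ on $B$ and on the part of $C$ with $|y-z|\le2\rho$, and $|x-y|\sim|y-z|$ on the part of $C$ with $|y-z|>2\rho$). On each of these pieces I would estimate the contribution by inserting the relevant bound from \eqref{eq:Green0} — $|x-y|^{-(N-2s)}$ when $|x-y|\lesssim\delta(x)$, $\delta(x)^\gamma|x-y|^{-(N-2s+\gamma)}$ when $\delta(x)\lesssim|x-y|$, and the fourth bound on $C$ — together with $\delta(y)\lesssim\delta(x)+|x-y|$, and then integrate in polar coordinates about $x$ (on $B$) or about $z$ (on $C$). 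Each piece yields a bound $\lesssim\delta(x)^\gamma\rho^{\,N+\gamma-p(N+\gamma-2s)}\rho^{-(N-2s+2\gamma)}$, except for the most subcritical sub–case of the far part of $C$, where one gets the stronger $\lesssim\delta(x)^\gamma$ (with at worst a $\log(1/\rho)$ factor at a single threshold value of $p$). Dividing by $M^\Omega(x,z)\sim\delta(x)^\gamma\rho^{-(N-2s+2\gamma)}$ and using $\delta(x)\le\rho$ to absorb the leftover powers of $\delta(x)$, every ratio is either $\lesssim\rho^{N-2s+2\gamma}\log(1/\rho)$ or $\lesssim\rho^{\,N+\gamma-p(N+\gamma-2s)}=\rho^{\,2s-(p-1)(N+\gamma-2s)}$, and this last exponent is strictly positive precisely because $p<p^*$. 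Summing the finitely many contributions gives \eqref{GMp}.

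The main obstacle is the bookkeeping on $B\cup C$. The kernel changes character across the scale $|x-y|\sim\delta(x)$, and when $\delta(x)\ll\rho$ the ball $B(x,\rho/2)$ straddles that scale, so one must subdivide $B$ (and the near part of $C$) accordingly; similarly, on $C$ the distances $|x-y|$ and $|y-z|$ become comparable only after removing the $O(\rho)$–neighborhood of $z$, forcing a further split of $C$. Checking that each of these sub–regions produces the same positive exponent $2s-(p-1)(N+\gamma-2s)$ — and, crucially, recognizing that near $z$ one should not chase exponents but instead invoke the equi–integrability of $M^\Omega(\cdot,z)^p$ against the weight $\delta^\gamma$ — is where the care lies; the remaining computations are elementary polar–coordinate integrals.
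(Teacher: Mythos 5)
Your argument is correct, and at the global level it follows the same strategy as the paper: the same three-region decomposition at the scale $\rho=|x-z|$ (your $A$, $B$, $C$ are exactly the paper's $\Omega_2$, $\Omega_1$, $\Omega_3$), the same kernel bounds \eqref{eq:Green0} and \eqref{M-est}, and the same critical arithmetic $N+\gamma-p(N+\gamma-2s)>0$. The differences are in how two of the three regions are handled. Near $z$, the paper does an explicit power count ($\delta(y)\le|y-z|$ together with $G^{\Omega}(x,y)\lesssim \delta(x)^{\gamma}\delta(y)^{\gamma}|x-y|^{-(N-2s+2\gamma)}$, giving $I_2\lesssim |x-z|^{N+\gamma-(N+\gamma-2s)p}$), whereas you observe that on $A$ one has $G^{\Omega}(x,y)\lesssim M^{\Omega}(x,z)\,\delta(y)^{\gamma}$ and conclude by absolute continuity of the integral of $M^{\Omega}(\cdot,z)^p\in L^1(\Omega,\delta^{\gamma})$; this is a softer, rate-free argument, though the $L^1$ fact itself is established by essentially the same power count. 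Near $x$, the paper avoids your sub-case analysis by an interpolation trick: it picks $\alpha$ with $(p-1)(N-2s+2\gamma)<\alpha<\gamma(p-1)+2s$ (such $\alpha$ exists precisely because $p<p^*$), writes $\delta(y)^{\gamma p}|y-z|^{-(N-2s+2\gamma)p}\lesssim \rho^{\alpha-(N-2s+2\gamma)p}\delta(y)^{\gamma p-\alpha}$ on $\Omega_1$, and then quotes the boundary estimate $\Gbb^{\Omega}[\delta^{\gamma p-\alpha}]\sim\delta^{\gamma}$ from \cite{Aba_2019}, so the whole region is dispatched in one line; your splitting of $B$ at the scale $|x-y|\sim\delta(x)$, with $\delta(y)\lesssim\delta(x)+|x-y|$ and the final use of $\delta(x)\le\rho$, is more elementary and self-contained but requires the sub-case bookkeeping you acknowledge --- I checked the representative sub-cases and each does produce the exponent $2s-(p-1)(N+\gamma-2s)>0$ you claim, so the plan goes through. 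Your treatment of $C$ (split at $|y-z|\sim\rho$, with $|x-y|\sim|y-z|$ on the far part and the possible logarithmic factor) coincides with the paper's estimate of $I_3$, so \eqref{GMp} follows as you describe.
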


\begin{proof} It can be seen from \eqref{M-est} that 
	$$
	\Gbb^{\Omega}[M^{\Omega}(\cdot,z)^p](x) = \int_{\Omega} G^{\Omega}(x,y) M^{\Omega}(y,z)^p dy \asymp \int_{\Omega} G^{\Omega}(x,y) \left( \dfrac{\delta(y)^{\gamma}}{|y-z|^{N-2s+2\gamma}} \right)^p dy.
	$$
	This, together with \eqref{M-est} again, implies
	$$
	\dfrac{\Gbb^{\Omega}[M^{\Omega}(\cdot,z)^p](x)}{M^{\Omega}(x,z)} \asymp \delta(x)^{-\gamma} |x-z|^{N-2s+2\gamma}\int_{\Omega} G^{\Omega}(x,y) \left( \dfrac{\delta(y)^{\gamma}}{|y-z|^{N-2s+2\gamma}} \right)^p dy.
	$$
	Put
	$$
	\Omega_1:=\Omega \cap B\left(x,\dfrac{|x-z|}{2}\right), \quad \Omega_2 :=\Omega \cap B\left(z,\dfrac{|x-z|}{2}\right), \quad \Omega_3:=\Omega \setminus (\Omega_1 \cup\Omega_2)
	$$
	and
	$$I_k :=  \delta(x)^{-\gamma} |x-z|^{N-2s+2\gamma}\int_{\Omega_k} G^{\Omega}(x,y) \left( \dfrac{\delta(y)^{\gamma}}{|y-z|^{N-2s+2\gamma}} \right)^pdy, \quad k = 1,2,3.$$
	
	We first estimate $I_1$.  For any $y \in \Omega_1$, we have $2|y-z| \ge |x-z|$ and obviously, $\delta(y) \le |y-z|$.  Furthermore, since $p < p^*$,  there exists $\alpha > 0$ such that
	$$(p-1)(N-2s + 2\gamma) < \alpha < \left( \gamma(p-1)+2s \right)\wedge \left( p(N-2s+2\gamma) \right).$$
	Then we find
	\begin{align*}
		I_1 &\lesssim \delta(x)^{-\gamma} |x-z|^{N-2s+2\gamma} \int_{\Omega_1} G^{\Omega}(x,y) \dfrac{\delta(y)^{\gamma p - \alpha}}{|y-z|^{p(N-2s+2\gamma)-\alpha}} dy\\
		&\lesssim \delta(x)^{-\gamma} |x-z|^{(N-2s+2\gamma)(1-p)+\alpha} \int_{\Omega_1} G^{\Omega}(x,y) \delta(y)^{\gamma p -\alpha} dy \\
		&\lesssim \delta(x)^{-\gamma} |x-z|^{(N-2s+2\gamma)(1-p)+\alpha} \Gbb^{\Omega}[\delta^{\gamma p - \alpha}](x).
	\end{align*}
	We have $\gamma p - \alpha > \gamma -2s$ since $\alpha < \gamma(p-1)+2s$.  Thus by \cite[Theorem 3.4]{Aba_2019}, $\Gbb^{\Omega}[\delta^{\gamma p - \alpha}] \asymp \delta^{\gamma}$. This implies
	$$I_1 \lesssim \delta(x)^{-\gamma} |x-z|^{(N-2s+2\gamma)(1-p)+\alpha}  \delta(x)^{\gamma} \le |x-z|^{(N-2s+2\alpha)(1-p)+\alpha}.$$
	Since $(N-2s+2\gamma)(1-p) +\alpha > 0$, it follows that
	\begin{equation} \label{limI1} \lim_{x \to z}I_1 = 0.
	\end{equation}
	
	We next estimate $I_2$.  For $y \in \Omega_2$, we have $|x-z| \leq 2|x-y|$ and $\delta(y) \le |y-z|$. This, together with the fact that
	\begin{equation}\label{eq:Gestimate2}
		G^{\Omega}(x,y) \lesssim \dfrac{\delta(x)^{\gamma}\delta(y)^{\gamma}}{|x-y|^{N-2s+2\gamma}}, \quad (x,y) \in (\Omega \times \Omega) \backslash D_{\Omega},
	\end{equation} implies
	\begin{align*}
		I_2 &\lesssim \delta(x)^{-\gamma} |x-z|^{N-2s+2\gamma} \int_{\Omega_2} \dfrac{\delta(x)^{\gamma}\delta(y)^{\gamma}}{|x-y|^{N-2s+2\gamma}}\cdot \left( \dfrac{\delta(y)^{\gamma}}{|y-z|^{N-2s+2\gamma}}\right)^p dy \\
		&\lesssim \int_{\Omega_2} \dfrac{1}{|y-z|^{(N-2s+\gamma)p - \gamma}} dy \lesssim \int_0^{|x-z|} t^{N-1+\gamma - (N+\gamma-2s)p}dt \\
		&= \frac{1}{N+\gamma - (N+\gamma-2s)p}|x-z|^{N+\gamma - (N+\gamma-2s)p}.
	\end{align*}
	Since $p<p^*$, it follows that
	\begin{equation} \label{limI2}  \lim_{x \to z}I_2 = 0.
	\end{equation}
	
	Finally, we estimate $I_3$. For every $y \in \Omega_3$,  we have $|y-z| \leq 3|x-y|$ and $\delta(y) \le |y-z|$.  Combining with \eqref{eq:Gestimate2} again, we obtain
	\begin{equation}\label{eq:I3_1}
		\begin{aligned}
			I_3 
			&\lesssim \delta(x)^{-\gamma} |x-z|^{N-2s+2\gamma} \int_{\Omega_3} \dfrac{\delta(x)^{\gamma}\delta(y)^{\gamma}}{|x-y|^{N-2s+2\gamma}}\cdot \left( \dfrac{\delta(y)^{\gamma}}{|y-z|^{N-2s+2\gamma}}\right)^p dy\\
			&\lesssim |x-z|^{N-2s+2\gamma} \int_{\Omega_3}|y-z|^{-(N-2s+\gamma)(p+1)}dy \\
			&\lesssim |x-z|^{N-2s+2\gamma} \int_{\frac{1}{2}|x-z|}^{3\mathrm{diam}(\Omega)}t^{N-1-(N-2s+\gamma)(p+1)}dt.
		\end{aligned}
	\end{equation}
	Notice that for $a > 0$ and $b \in \mathbb{R}$
	$$\lim_{ w \to 0^+} w^a \int_w^1 t^{b} dt =  0 \text{ if } a+ b +1  > 0.$$
	We note that $(N-2s+2\gamma) +  \left[ N-1-(N-2s+\gamma)(p+1)\right] + 1 > 0$
	since $p < p^*$.  Therefore from \eqref{eq:I3_1}, we conclude that
	\begin{equation} \label{limI3} \lim_{x \to z}I_3 = 0.
	\end{equation}
	Thus combining \eqref{limI1},  \eqref{limI2} and \eqref{limI3}, we obtain \eqref{GMp}.
\end{proof}

\begin{proposition} \label{bdry-iso} Assume that \eqref{eq_L1}--\eqref{eq_L4} and \eqref{eq_G2}--\eqref{eq_G4} hold, $1<p<p^*$ and $z \in \partial \Omega$. Then there exists a unique function $u \in L^1(\Omega,\delta^\gamma)$ such that
	\begin{equation} \label{eq:boundary} u + \Gbb^{\Omega}[u^p] = M^\Omega(\cdot,z) \quad \text{a.e. in } \Omega.
	\end{equation}
	Moreover,  there holds
	\begin{equation} \label{asymp} \lim_{\Omega \ni x \to z}\frac{u(x)}{M^{\Omega}(x,z)}=1.
	\end{equation}
\end{proposition}
Roughly speaking, the function $u$ in Theorem \ref{bdry-iso} could be understood as a `solution' to the boundary problem
\begin{equation} \label{eq:bdry}
	\left\{ \begin{aligned}
		\L u + u^p &= 0&&\text{ in }\Omega, \\
		u  &= \delta_z &&\text{ on }\partial \Omega, \text{ or in }\R^N \backslash \overline{\Omega} \text{ (if applicable)}.
	\end{aligned} \right.
\end{equation}

Although the problem of boundary singularities of solutions to \eqref{eq:bdry} is interesting, it is not a topic that we want to consider and dwell on in this paper. 

\begin{proof}[{\sc Proof of Theorem \ref{bdry-iso}}.]
	Let $\{z_n\}_{n \in \mathbb{N}} \subset \Omega$ be a sequence converging to $z \in \partial \Omega$. Put $\mu_n = \delta^{-\gamma} \delta_{z_n}$ then $\mu_n \in \M(\Omega,\delta^\gamma)$ and $\| \mu_n \|_{\M(\Omega,\delta^\gamma)}=1$. Since $p \in (1,p^*)$, for every $n \in \mathbb{N}$, by Corollary \ref{cor:measuredata-sub},  there exists a unique positive function $u_n \in L^1(\Omega,\delta^{\gamma})$ satisfying
	\begin{equation} \label{un-repr} u_n + \Gbb^\Omega[u_n^p] = \Gbb^\Omega[\mu_n]. 
	\end{equation}
	
	Since $\| \mu_n \|_{\M(\Omega,\delta^\gamma)}=1$ and $\Gbb^\Omega: \M(\Omega,\delta^\gamma) \to L^1(\Omega,\delta^\gamma)$ is compact (see Theorem \ref{theo:compactness}), we know that up to a subsequence $\{ \Gbb^\Omega[\mu_n]\}_{n \in \mathbb{N}}$ converges in $L^1(\Omega,\delta^\gamma)$ and  a.e. in $\Omega$ to a function $v_1 \in L^1(\Omega,\delta^{\gamma})$.  This, combined with assumption \eqref{eq_G4}, yields
	$$ v_1(x) = \lim_{n \to \infty}\Gbb^\Omega[\mu_n](x) =   \lim_{n \to \infty}\frac{G^\Omega(x,z_n)}{\delta(z_n)^\gamma} = M^\Omega(x,z) \quad \text{for a.e. } x \in \Omega.
	$$
	
	Next, we infer from \eqref{un-repr} that 
	$ 0 \le u_n \le \Gbb^{\Omega}[\mu_n]$ for all $n \in \mathbb{N}$. 
	Hence for all $q \in (1, p^*)$, there holds
	\begin{equation} \label{unLq} \| u_n \|_{L^q(\Omega,\delta^\gamma)} \leq  \| \Gbb^{\Omega}[\mu_n] \|_{L^q(\Omega,\delta^\gamma)} \le C(N,\Omega,s,\gamma,q),\quad \forall n \in \N.
	\end{equation}
	Taking \eqref{unLq} with $q=p$ and taking into account the fact that $\Gbb^\Omega: L^1(\Omega,\delta^\gamma) \to L^1(\Omega,\delta^\gamma)$ is compact, we see that there exists a function $v_2 \in L^1(\Omega,\delta^\gamma)$  such that, up to a subsequence, $\{ \Gbb^\Omega[u_n^p]\}_{n \in \mathbb{N}}$ converges to $v_2$  in $L^1(\Omega,\delta^\gamma)$ and a.e. in $\Omega$. Put $u=M^\Omega(\cdot,z) - v_2$ then from \eqref{un-repr}, we realize that $u_n \to u$ in $L^1(\Omega,\delta^\gamma)$ and a.e. in $\Omega$. 
	
	In light of \eqref{unLq} and H\"older inequality, $\{u_n^p\}_{n \in \mathbb{N}}$ is equi-integrable in $L^1(\Omega,\delta^\gamma)$. Moreover, $u_n^p \to u^p$ a.e. in $\Omega$. Therefore, by Vitali's convergence theorem, $u_n^p \to u^p$ in $L^1(\Omega,\delta^\gamma)$. Consequently, by Corollary \ref{cor:stability}, $\Gbb^{\Omega}[u_n^p] \to \Gbb^{\Omega}[u^p]$ in $L^1(\Omega,\delta^\gamma)$ and a.e. in $\Omega$. Gathering the above facts and letting $n \to \infty$ in \eqref{un-repr} yields \eqref{eq:boundary}. In addition, the uniqueness follows from Kato's inequality \eqref{eq:kato_abs}.
	
	Next we prove \eqref{asymp}. From \eqref{eq:boundary}, we obtain $M^\Omega(x,z) - \Gbb^{\Omega}[M^\Omega(\cdot,z)^p](x) \leq u(x) \leq M^\Omega(x,z)$, 
	which implies
	$$ 1 - \frac{\Gbb^{\Omega}[M^\Omega(\cdot,z)^p](x)}{M^\Omega(x,z)} \leq \frac{u(x)}{M^\Omega(x,z)} \leq 1.$$
	This and \eqref{GMp} imply \eqref{asymp}.	The proof is complete.
\end{proof}

\section{Compact embeddings}\label{appendix:compact}

It was shown in \cite{BonFigVaz_2018} that, under assumption \eqref{eq_G2}, the map $\Gbb^{\Omega}: L^2(\Omega) \to L^2(\Omega)$ is compact. We note that it could be proved alternatively, under assumptions \eqref{eq_L1}--\eqref{eq_L4}, by using the compact embedding $\mathbb{H}(\Omega)\hookrightarrow \hookrightarrow L^2(\Omega)$. The question of compact embedding has also been raised in \cite{ChaGomVaz_2019_2020} in the case of the space $\Hbb^2(\Omega)$. We give below an affirmative answer to this question for a general spaces defined as 
\begin{align*}
\Hbb^{\alpha} (\Omega) := \left\{ u = \sum_{n=1}^{\infty} \widehat{u_n} \varphi_n: \sum_{n=1}^{\infty}(\lambda_n)^{\alpha} \left| \widehat{u_n}\right|^2 < \infty \right\}, \alpha > 0,
\end{align*}
where $0<\lambda_1 < \lambda_2 \leq \ldots \lambda_n \to +\infty$ is the sequence of eigenvalues of $\L$ in $\Omega$ with zero Dirichlet boundary or exterior condition.

	\begin{proposition}\label{prop:compact} Suppose that \eqref{eq_L1}--\eqref{eq_L2} and \eqref{eq_G2} hold. Then $\Hbb^{\alpha}(\Omega)$ is compactly embedded into $L^2(\Omega)$ for every $\alpha > 0$.
	\end{proposition}
	
	\begin{proof}Consider a bounded set $V$ in $\Hbb^{\alpha}(\Omega)$. For any $u \in V$ and $k \in \mathbb{N}$, one has
		\begin{align*}
			C \ge \sum_{n=1}^{\infty}(\lambda_n)^{\alpha} \left| \widehat{u_n}\right|^2 \ge \sum_{n= k}^{\infty}(\lambda_n)^{\alpha} \left| \widehat{u_n}\right|^2  \ge \sum_{n= k}^{\infty}(\lambda_k)^{\alpha} \left| \widehat{u_n}\right|^2,
		\end{align*}
		which implies 
		$\sum_{n= k}^{\infty}\left| \widehat{u_n}\right|^2 < C(\lambda_k)^{-\alpha}$.
		For every $\varepsilon > 0$, one can choose $k \in \mathbb{N}$ large enough such that $C(\lambda_k)^{-\alpha} < \varepsilon$. Such a $k$ exists since $\lambda_n \to \infty$ as $n \to \infty$. This implies
		$\sum_{n= k}^{\infty}\left| \widehat{u_n}\right|^2 < \varepsilon,\quad \forall u \in V$.
		One can see that \cite[Proposition 1.2.39]{DraMil_2013} yields $V$ is compact in $L^2(\Omega)$. We complete the proof.
	\end{proof}

\end{document}